\newtheorem{theorem}{Theorem}[section]
\newenvironment{customthm}[1]
  {\innercustomthm}
  {\endinnercustomthm}
\newenvironment{customDef}[1]
  {\innercustomDef}
  {\endinnercustomDef}
\newtheorem{definition}{Definition}[section]
\newtheorem{lemma}{Lemma}[section]
\newtheorem{proposition}{Proposition}[section]
\newtheorem{conjecture}{Conjecture}
\newtheorem*{remark}{Remark}
\newcommand\fnote[1]{\captionsetup{font=small}\caption*{#1}}
\begin{document}

\newcommand{\GT}{\widetilde{PSL_2\mathbb{R}}}
\newcommand{\G}{PSL_2\mathbb{R}}

\title{Orderability of Homology Spheres Obtained by Dehn Filling}
\author{Xinghua Gao}
\address{Department of Mathematics \\ University of Illinois at Urbana-Champaign \\ 1409 W. Green St, Urbana, IL, 61801}
\email{xgao29@126.com}


\begin{abstract}
In this paper, we develop a method for constructing left-orders on the fundamental groups of rational homology 3-spheres. We begin by constructing the holonomy extension locus of a rational homology solid torus $M$, which encodes the information about peripherally hyperbolic $\widetilde{\text{PSL}_2\mathbb{R}}$ representations of $\pi_1(M)$. Plots of the holonomy extension loci of many rational homology solid tori are shown, and the relation to left-orderability is hinted. Using holonomy extension loci, we study rational homology 3-spheres coming from Dehn filling on rational homology solid tori and construct intervals of Dehn fillings with left-orderable fundamental group.
\end{abstract}

\maketitle

\section{Introduction}
A nontrivial group is called left-orderable if there exists a strict total order on the set of group elements which is invariant under left multiplication. We will say that a closed 3-manifold is orderable when its fundamental group is left-orderable. In particular, for a countable group, being left-orderable is the same as being isomorphic to a subgroup of Homeo$^+(\mathbb{R})$, the group of orientation preserving homeomorphisms of $\mathbb{R}$ (see e.g. \cite[Theorem 2.6]{BRW}). 

The reason why we care about left-orderability is that this property is conjectured to detect L-spaces completely. Recall an irreducible $\mathbb{Q}$-homology 3-sphere (abbr. $\mathbb{Q}$HS) $Y$ is called an L-space if dim $\widehat{HF}(Y) = |H_1(Y; \mathbb{Z})|$, i.e. it obtains minimal Heegaard Floer homology \cite{lspaceknot}. Boyer, Gordon, and Watson conjectured in \cite{BGW} that a $\mathbb{Q}$HS is a non L-space if and only if its fundamental group is left-orderable. This conjecture has been studied extensively in recent years and evidence has accumulated in favor of the conjecture \cite{BC, GL14}.

One of the main difficulties of proving the conjecture is to determine the left-orderability of a fundamental group. Various tools have been developed to study left-orderability. Boyer, Rolfsen, and Wiest proved that a necessary and sufficient condition that the fundamental group of a compact, connected, orientable 3-manifold be left-orderable is that there exists a homomorphism into Homeo$^+(\mathbb{R})$ \cite[Theorem 3.2]{BRW}. In particular, representation (i.e. homeomorphism) of the fundamental group into $\widetilde{\text{PSL}_2\mathbb{R}}$, a subgroup of Homeo$^+(\mathbb{R})$,  has been proven very useful in studying the left-orderability of $3$-manifold groups \cite{CD16, EHN, Hu, Tran_cover, Tran_surg}. Being the universal cover of the linear Lie groups PSL$_2\mathbb{R}$ and SL$_2\mathbb{R}$, $\widetilde{\text{PSL}_2\mathbb{R}}$ is more computable as compared to Homeo$^+(\mathbb{R})$, while still containing a fair amount of information about left-orders inherited from Homeo$^+(\mathbb{R})$. 

To study $\widetilde{\text{PSL}_2\mathbb{R}}$ representations, Culler and Dunfield introduced the idea of the translation extension locus of a compact 3-manifold $M$ with torus boundary \cite{CD16}. They gave several criteria implying whole intervals of Dehn fillings of $M$ have left-orderable fundamental groups.

\subsection{The translation extension locus}
We follow the notation in \cite{CD16}. Denote PSL$_2\mathbb{R}$ by $G$, and $\widetilde{\text{PSL}_2\mathbb{R}}$ by $\widetilde{G}$. Let $R_{\widetilde{G}}(M)=\text{Hom}(\pi_1(M), \widetilde{G})$ be the variety of $\widetilde{G}$ representations of $\pi_1(M)$. For a precise definition of the representation variety, see Section \ref{variety}.

The name translation extension locus comes from the fact that we need to use translation number in the definition.
For an elements $\widetilde{g}$ in $\widetilde{G}$, define the translation number to be
\begin{displaymath}
 \text{trans}(\widetilde{g})=\lim_{n\to\infty}\frac{\widetilde{g}^n(x)-x}{n} \text{ for some } x\in \mathbb{R}.
\end{displaymath}
Then trans: $R_{\widetilde{G}}(\partial M)\rightarrow H^1(\partial M; \mathbb{R})$ can be defined by taking $\widetilde{\rho}$ to trans$\circ \widetilde{\rho}$.
The translation number of $\widetilde{g}$ measure the distance $\widetilde{g}$ moves a point on the real line. It will be discussed in more details in the next section.

Let $M$ be a knot complement in a $\mathbb{Q}$HS or equivalently a $\mathbb{Q}$-homology solid torus. To study $\widetilde{G}$ representations of $M$ whose restrictions to $\pi_1(\partial M)$ are elliptic, Culler and Dunfield gave the following definition of translation extension locus.

\begin{definition}(See \cite{CD16} Section 4)
Let $PE_{\widetilde{G}}(M)$ be the subset of representations in $R_{\widetilde{G}}(M)$ whose restriction to $\pi_1(\partial M)$ are either elliptic, parabolic, or central. Consider the composition
\[
PE_{\widetilde{G}}(M)\subset R_{\widetilde{G}}(M)\stackrel{\iota^*}{\longrightarrow} R_{\widetilde{G}}(\partial M)\stackrel{\text{trans}}{\longrightarrow} H^1(\partial M; \mathbb{R})
\]
The closure in $H^1(\partial M; \mathbb{R})$ of the image of $PE_{\widetilde{G}}(M)$ under $\text{trans}\circ\iota^*$ is called translation extension locus and denoted $EL_{\widetilde{G}}(M)$.
\end{definition}

They showed that translations extension locus of a $\mathbb{Q}$-homology solid torus satisfies the following properties. 
(In the statement of the theorem, $D_{\infty}(M)$ is the infinite dihedral group $\mathbb{Z}\rtimes \mathbb{Z}/2\mathbb{Z}$. It acts on $\mathbb{R}^2$ by translating along the x-axis by integers and reflecting about the origin.)
\begin{theorem}\cite[Theorem 4.3]{CD16}\label{CDThm0}
The extension locus $EL_{\widetilde{G}}(M)$ is a locally finite union of analytic arcs and isolated points. It is invariant under $D_{\infty}(M)$ with quotient homeomorphic to a finite graph. The quotient contains finitely many points which are ideal or parabolic. The locus $EL_{\widetilde{G}}(M)$ contains the horizontal axis $L_{0}$, which comes from representations to $\widetilde{G}$ with abelian image.
\end{theorem}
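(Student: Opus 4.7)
My plan is to combine a real-analytic study of $R_{\widetilde G}(M)$ with a finiteness argument modulo the natural $D_\infty$-action. The underlying facts I would use are that $R_{\widetilde G}(M)$ is a real analytic space lifting the real algebraic structure on $R_G(M)$, and that translation number is real analytic on the elliptic locus of $\widetilde G$, continuous on the parabolic and central strata, and a genuine homomorphism on any abelian subgroup. First I would verify that $PE_{\widetilde G}(M)$ is a real semi-analytic subset, cut out on $R_{\widetilde G}(M)$ by the trace inequalities $\mathrm{tr}^2(\rho(\mu))\le 4$ and $\mathrm{tr}^2(\rho(\lambda))\le 4$. Since $\pi_1(\partial M)\cong\mathbb Z^2$ is abelian, $\mathrm{trans}\circ\iota^*$ is a real-analytic map from the regular part of $PE_{\widetilde G}(M)$ to $H^1(\partial M;\mathbb R)\cong\mathbb R^2$. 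A dimension count shows that a generic fibre of $\iota^*$ is discrete, so by Lojasiewicz/Tarski--Seidenberg style image theorems for semi-analytic maps the image is a locally finite union of real-analytic arcs together with isolated points.

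Next I would construct the $D_\infty$-action. The center of $\widetilde G$ is infinite cyclic, generated by an element $z$ with $\mathrm{trans}(z)=1$. Twisting $\widetilde\rho$ by the central cocycle $\gamma\mapsto z^{k\phi(\gamma)}$, where $\phi:\pi_1(M)\to\mathbb Z$ is the projection onto the free part of $H_1(M)$, yields a $\mathbb Z$-action on $R_{\widetilde G}(M)$ which shifts the $\mathrm{trans}$ image by an integer vector dictated by the rational longitude slope. Conjugating by an orientation-reversing isometry of $\mathbb H^2$ (which lifts to an automorphism of $\widetilde G$) negates translation numbers and yields an involution; together with the shift these generate $D_\infty$ acting on $EL_{\widetilde G}(M)$. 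To show the quotient is homeomorphic to a finite graph, I would use a Culler--Shalen style projective compactification of the $G$-character variety to see that only finitely many components of $PE_{\widetilde G}(M)/\mathbb Z$ survive, and then track translation through the compactification to bound the number of ideal and parabolic points modulo $D_\infty$.

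Finally, for the horizontal axis, abelian $\widetilde G$-representations of $\pi_1(M)$ factor through $H_1(M;\mathbb Z)/\text{torsion}\cong\mathbb Z$; choosing $\lambda$ to be the rational longitude makes $\widetilde\rho(\lambda)$ central for every such $\widetilde\rho$, so $\mathrm{trans}(\widetilde\rho(\lambda))=0$, while $\mathrm{trans}(\widetilde\rho(\mu))$ ranges over all of $\mathbb R$ as $\widetilde\rho(\mu)$ varies over elliptic and central lifts, sweeping out $L_0$. I expect the main obstacle to be the graph-finiteness together with the finiteness of ideal and parabolic points after quotienting: this requires controlling degeneration of $\widetilde G$-representations at infinity in the character variety and arranging compatibility between the Culler--Shalen compactification, the central $\mathbb Z$-lift to $\widetilde G$, and the continuous-but-not-smooth translation function, which is the most delicate piece of the argument.
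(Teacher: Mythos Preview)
This theorem is not proved in the present paper; it is quoted from \cite{CD16}. The paper does, however, prove the analogous Theorem~\ref{graph} for the holonomy locus, and the method there mirrors Culler--Dunfield's. So I compare your proposal against that template.

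Your treatment of the $D_\infty(M)$-action and of the axis $L_0$ is essentially what the paper does (cf.\ Lemma~\ref{invariant} and the last paragraph of the proof of Theorem~\ref{graph}). The substantive gap is in your first paragraph, at the step ``a dimension count shows that a generic fibre of $\iota^*$ is discrete, so \ldots\ the image is a locally finite union of real-analytic arcs together with isolated points.'' Two problems. First, the fibres of $\iota^*:R_{\widetilde G}(M)\to R_{\widetilde G}(\partial M)$ are not discrete in general: for a fixed boundary restriction there can be positive-dimensional families of extensions to $\pi_1(M)$, and in any case the conjugation action already makes the source high-dimensional. Second, and more seriously, discreteness of fibres would not by itself force the image in $H^1(\partial M;\mathbb R)\cong\mathbb R^2$ to be one-dimensional; a priori the image could be an open region. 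The actual dimension bound does not come from a naive count on representation varieties but from character varieties: the key input (cited in the proof of Theorem~\ref{graph} as \cite[Lemma~2.4]{CD16}) is that $\iota^*(X(M))\subset X(\partial M)$ has \emph{complex} dimension at most~$1$, a half-lives/half-dies type statement. The paper then builds an analytic map $c$ from $H^1(\partial M;\mathbb R)$ to $X_G(\partial M)$ and observes that $EL_{\widetilde G}(M)$ (resp.\ $HL_{\widetilde G}(M)$) lands in $c^{-1}\bigl(\overline{\iota^*(X_G(M))}\bigr)$; since the target is a one-dimensional real semialgebraic set and $c$ is analytic, the preimage is a locally finite graph with analytic edges. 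Your proposal never invokes this one-dimensionality of $\iota^*(X(M))$, and without it the argument does not go through.

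A smaller point: Tarski--Seidenberg concerns polynomial maps on semialgebraic sets, but $\mathrm{trans}$ is transcendental, so you would at best land in the subanalytic category. The paper's route via $c$ sidesteps this entirely, because one takes the \emph{preimage} of a semialgebraic one-dimensional set under an explicit analytic map rather than the \emph{image} of a semi-analytic set.
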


The translations extension locus depicts the set of peripherally elliptic and parabolic $\widetilde{G}$ representations of a $\mathbb{Q}$-homology solid torus. The following results regarding orderability of Dehn filling were obtained using translation extension locus.

\begin{theorem}\cite[Theorem 7.1]{CD16}\label{CDThm1}
Suppose that $M$ is a longitudinally rigid (defined in Section \ref{5}) irreducible $\mathbb{Q}$-homology solid torus and that the Alexander polynomial of $M$ has a simple root $\xi$ on the unit circle. When $M$ is not a $\mathbb{Z}$-homology solid torus, further suppose that $\xi^k\neq 1$ where $k>0$ is the order of the homological longitude $\lambda$ in $H_1(M; \mathbb{Z})$. Then there exists $a>0$ such that for every rational $r\in(-a, 0)\cup(0, a)$ the Dehn filling $M(r)$ is orderable.
\end{theorem}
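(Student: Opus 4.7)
The plan is to use the structure of $EL_{\widetilde{G}}(M)$ to turn the orderability question into a geometric statement about analytic arcs in $H^1(\partial M;\mathbb{R})$. Following the Burde--de~Rham / Heusener--Porti construction, a simple unit-circle root $\xi$ of $\Delta_M$ produces a non-abelian representation $\rho_0 \colon \pi_1(M)\to G$ whose boundary restriction is elliptic with meridian rotation angle determined by $\arg\xi$. Choosing a $\widetilde{G}$-lift gives a point $p_0 = (x_0,y_0) \in EL_{\widetilde{G}}(M)$ with $x_0 \ne 0$, and I need to verify $p_0 \notin L_0$, i.e.\ $y_0 \ne 0$. When $M$ is a $\mathbb{Z}$HS the non-abelianness of $\rho_0$ together with simplicity of $\xi$ forces $\widetilde{\rho_0}(\lambda)$ to be a non-trivial elliptic rotation about the fixed point of $\widetilde{\rho_0}(\mu)$, hence $y_0\ne 0$. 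When $M$ is only a $\mathbb{Q}$HS, the abelianisation sends $k\lambda$ to zero, so every abelian $\widetilde{G}$-representation $\widetilde{\rho}$ satisfies $\mathrm{trans}(\widetilde{\rho}(\lambda))=0$; the hypothesis $\xi^k\ne 1$ is exactly what prevents $\widetilde{\rho_0}$ from being conjugate into such an abelian representation, so again $y_0\ne 0$.

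By Theorem~\ref{CDThm0}, $p_0$ lies on an analytic arc $\gamma \subset EL_{\widetilde{G}}(M)$, and simplicity of $\xi$ guarantees $\rho_0$ is a smooth point of the relevant slice of the representation variety, so $\gamma$ is genuinely a real-analytic curve through $p_0$. The main technical obstacle, in my view, is to show that $\gamma$ is not vertical at $p_0$, equivalently that the longitude coordinate $y$ actually varies along $\gamma$. This is precisely what longitudinal rigidity (\S\ref{5}) is designed to supply: it will forbid any deformation of $\rho_0$ that moves only the meridian translation while keeping the longitude translation pinned at $y_0$, and hence forces a non-zero $dy$-component in the tangent to $\gamma$ at $p_0$. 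Continuing $\gamma$ analytically and using the finite-graph quotient structure of $EL_{\widetilde{G}}(M)/D_\infty(M)$ together with the classification of limit behaviour at ideal and parabolic vertices from Theorem~\ref{CDThm0}, I would argue that $\gamma$ reaches $L_0$ at a point $(x_1,0)$ with $x_1\ne 0$ (longitudinal rigidity is invoked once more to rule out $x_1=0$) and in fact crosses $L_0$ analytically at this point.

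The arc $\gamma$ then connects a point with $y_0\ne 0$ to a point $(x_1,0)\in L_0\setminus\{0\}$ and continues past $L_0$ to a region where $y$ has the opposite sign. The continuous function $(x,y)\mapsto y/x$ on $\gamma$ therefore takes the value $0$ at $(x_1,0)$ and values of both signs near it, so it realises an open interval $(-b,b)$ of slopes. Consequently there exists $a>0$ such that for every rational $r=p/q \in (-a,0)\cup(0,a)$ the line $px+qy=0$ (i.e.\ slope $y/x=-r$) meets $\gamma$ at a non-origin point $(x,y)$, giving a non-abelian $\widetilde{G}$-representation of $\pi_1(M)$ that sends the filling curve $p\mu+q\lambda$ to an element of translation zero. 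The standard filling-orderability criterion established in \cite{CD16} then promotes this to a non-trivial $\widetilde{G}$-representation of $\pi_1(M(r))$, yielding left-orderability of $\pi_1(M(r))$ and completing the proof.
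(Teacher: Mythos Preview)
This theorem is quoted from \cite{CD16} and is not proved in the present paper; however, the paper does prove an analogous result (Theorem~\ref{Thm_Alex}) by exactly the method of \cite{CD16}, so one can compare your outline against that argument.

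Your geometry is inverted. In the actual proof one starts with the \emph{abelian} diagonal representation $\rho_\alpha$ attached to the root $\xi$; its image in $EL_{\widetilde{G}}(M)$ is a point $(x_0,0)$ lying \emph{on} $L_0$, with $x_0\ne 0$ (this is where the hypotheses $\xi\ne 1$ and, in the $\mathbb{Q}$HS case, $\xi^k\ne 1$ are used). Simplicity of $\xi$ then allows, via the Heusener--Porti deformation argument, an analytic arc of irreducible representations passing through $\rho_\alpha$, giving an arc $A$ in the extension locus through $(x_0,0)$. Longitudinal rigidity is invoked to show that $A$ is \emph{not contained in} $L_0$: if it were, every $\rho_t$ would factor through $\pi_1(M(0))$, contradicting that $M(0)$ has few characters. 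Once $A$ leaves $L_0$ near $(x_0,0)$, lines $L_r$ of small nonzero slope meet $A$ away from parabolic and ideal points, and the filling criterion finishes the proof.

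By contrast, you place the starting point \emph{off} $L_0$, assert that non-abelianness forces $y_0\ne 0$, and then try to continue the arc until it \emph{reaches} $L_0$. The first assertion is false for the reducible non-abelian representation at an Alexander root: $\lambda$ is torsion in $H_1(M;\mathbb{Z})$, so $\rho_0(\lambda)$ has trivial translation number and $y_0=0$. Your use of longitudinal rigidity is also off: it does not prevent the arc from being ``vertical''; it prevents the arc from being entirely \emph{horizontal} (i.e., from remaining in $L_0$). Finally, the step where you analytically continue $\gamma$ until it hits $L_0$ is unjustified and, once the starting point is placed correctly on $L_0$, unnecessary.
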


\begin{theorem}\cite[Theorem 1.4]{CD16}\label{CDThm2}
Let $K$ be a hyperbolic knot in a $\mathbb{Z}$-homology $3$-sphere $Y$ . If the trace field of the knot exterior M has a real embedding then:
\begin{itemize}
\item[(a)] For all sufficiently large $n$, the $n$-fold cyclic cover of $Y$ branched over $K$ is orderable.
\item[(b)] There is an interval I of the form $(-\infty,a)$ or $(a,\infty)$ so that the Dehn filling $M(r)$ is orderable for all rational $r\in I$ .
\item[(c)] There exists $b>0$ so that for every rational $r\in (-b,0)\cup(0,b)$ the Dehn filling $M(r)$ is orderable.
\end{itemize}
\end{theorem}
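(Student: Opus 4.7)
The plan is to use the real trace field embedding to produce a parabolic representation sitting on $EL_{\widetilde{G}}(M)$, then deform via Thurston's hyperbolic Dehn surgery to obtain a nontrivial analytic arc whose slope structure yields the three conclusions. Let $\rho_0 : \pi_1(M) \to \text{PSL}_2\mathbb{C}$ be the discrete faithful representation, and let $\sigma$ denote the real embedding of the trace field. The composition $\sigma \circ \rho_0 : \pi_1(M) \to G$ sends both $\mu$ and $\lambda$ to parabolic elements, since the trace $\pm 2$ is preserved under any field embedding and both peripheral generators are parabolic in the cusp group. Because $M$ is a $\mathbb{Z}$-homology solid torus, the $H^2$-obstruction to lifting vanishes, so $\sigma\circ\rho_0$ lifts to $\widetilde{\rho} : \pi_1(M) \to \widetilde{G}$, yielding a parabolic point $p_0 \in EL_{\widetilde{G}}(M)$ at the origin.

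Next, I would construct a nontrivial analytic arc $\gamma \subset EL_{\widetilde{G}}(M)$ through $p_0$. By Thurston's hyperbolic Dehn surgery theorem, the $\text{PSL}_2\mathbb{C}$-character variety is locally a smooth one-complex-dimensional curve at $[\rho_0]$, parameterized by the complex meridian holonomy. The fixed locus of the Galois involution corresponding to $\sigma$ carves out a real analytic curve through $[\sigma\circ\rho_0]$ in the $G$-character variety; lifting to $\widetilde{G}$ yields $\gamma$. Along $\gamma$ the translation number of $\widetilde{\rho}(\mu)$ varies continuously through $0$, taking elliptic values of arbitrarily small magnitude, while the longitude translation follows Thurston's cusp relation.

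Each conclusion is then read off by intersecting $\gamma$ with the slope lines $L_r = \{(x,y) : px + qy = 0\}$ for $r = p/q \in \mathbb{Q}$: each intersection gives a $\widetilde{G}$ representation of $\pi_1(M(r))$, which is nontrivial and hence implies left-orderability via the faithful action of $\widetilde{G}$ on $\mathbb{R}$ by order-preserving homeomorphisms. For (c), a local analysis of $\gamma$ near $p_0$ combined with the $D_\infty(M)$-symmetry shows that every sufficiently small nonzero rational slope is realized, yielding the interval $(-b,0)\cup(0,b)$. For (b), I would argue that some $D_\infty(M)$-translate of $\gamma$ extends unboundedly in $H^1(\partial M;\mathbb{R})$: by Theorem \ref{CDThm0} the quotient locus has only finitely many ideal or parabolic endpoints, so $\gamma$ cannot terminate at a generic interior point and some translate must escape to infinity, producing a half-line interval $I$ of realized slopes. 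For (a), points of $\gamma$ close to $p_0$ correspond to $\rho(\mu)$ elliptic of small rotation angle; for each large $n$ pick such a point where $\rho(\mu)$ has order exactly $n$, restrict to $\pi_1(M_n)$ and descend to $\pi_1(\Sigma_n(K))$ (possible since $\rho(\mu^n)=1$), and finally lift to $\widetilde{G}$. The principal obstacle is verifying the nondegeneracy of $\gamma$ and its coverage of the required slope ranges near $p_0$, which reduces to an explicit analysis of the real slice of Thurston's deformation space together with computing the cusp-shape contribution to $\gamma$'s tangent direction.
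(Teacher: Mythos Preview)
This theorem is not proved in the present paper; it is quoted from \cite{CD16} as background. The paper's own contributions are the holonomy-extension-locus analogues, Theorems~\ref{Thm_Alex} and~\ref{Galois}. So there is no ``paper's own proof'' to compare against here.

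That said, your sketch does follow the correct overall strategy of Culler--Dunfield: take the Galois conjugate of the holonomy representation under the real place to obtain a boundary-parabolic $G$-representation, lift it to $\widetilde{G}$, deform using Thurston's hyperbolic Dehn surgery theory to produce a nontrivial arc in $EL_{\widetilde{G}}(M)$, and read off the three conclusions from how slope lines meet this arc. A few points in your write-up are loose or incorrect, however.

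First, the assertion that $p_0$ is ``at the origin'' is not automatic. One can normalize $\text{trans}(\widetilde{\rho}(\mu))=0$ using the $H^1(M;\mathbb{Z})$-action, but $\text{trans}(\widetilde{\rho}(\lambda))$ is an even integer which a priori need not be zero. Controlling this number is exactly the delicate point; compare Lemma~\ref{trans} and the genus-$2$ mapping-torus hypothesis in Theorem~\ref{Galois} of this paper, which is imposed precisely to force the analogous translation number to vanish. Second, the phrase ``fixed locus of the Galois involution corresponding to $\sigma$'' is not the right mechanism: a real place is an embedding, not an involution of $X(M)$, and the arc of real characters is obtained instead by showing $[\sigma\circ\rho_0]$ is a smooth real point of a curve in $X(M)$ (as in Lemma~\ref{galois_smooth} here). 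Third, your argument for (b) --- that some $D_\infty(M)$-translate of $\gamma$ ``must escape to infinity'' --- does not work as stated: by the Milnor--Wood inequality (Proposition~\ref{bounded}) the $\lambda$-coordinate of $EL_{\widetilde{G}}(M)$ is bounded, so arcs do not run off to infinity in the way you suggest. The half-infinite interval of slopes in (b) comes from analyzing the arc between the parabolic point and the horizontal axis $L_0$, not from unboundedness.
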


Recently, Herald and Zhang \cite{HZ} improved Theorem \ref{CDThm1} in the case of $M$ being a $\mathbb{Z}$-homology solid torus by removing the longitudinally rigid condition (meaning that $M(0)$ has 0-dimensional PSL$_2\mathbb{C}$ character variety apart from the component of reducible representations) of $M$. Their result is stated as follows.
\begin{theorem}
Let $M$ be the exterior of a knot in an integral homology $3$-sphere such that $M$ is irreducible. If the Alexander polynomial $\Delta(t)$ of $M$ has a simple root on the unit circle, then there exists a real number $a>0$ such that, for every rational slope $r\in(-a, 0)\cup(0, a)$, the Dehn filling $M(r)$ has left-orderable fundamental group.
\end{theorem}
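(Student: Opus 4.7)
The plan is to follow the translation extension locus strategy of Culler--Dunfield from Theorem~\ref{CDThm1}, but to replace the appeal to longitudinal rigidity by a direct computation of the tangent to the relevant arc at its abelian endpoint. To produce orderable Dehn fillings $M(r)$ for $r$ near $0$, it suffices, by the structure theorem for $EL_{\widetilde{G}}(M)$ (Theorem~\ref{CDThm0}), to exhibit an analytic arc in $EL_{\widetilde{G}}(M)$ emanating from a point $(\alpha,0)$ of the horizontal axis $L_0$ in a direction that is \emph{not} horizontal. Indeed, for small rational $r$ the line $L_r\subset H^1(\partial M;\mathbb{R})$ associated with Dehn filling slope $r$ is close to $L_0$, so a non-horizontal arc must meet $L_r$ near $(\alpha,0)$, producing a $\widetilde{G}$-representation of $\pi_1(M(r))$ without a global fixed point, hence left-orderability of $\pi_1(M(r))$.

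To produce the arc, fix a simple root $\xi=e^{2\pi i\alpha}$ of $\Delta(t)$ on the unit circle. First I would form the reducible representation $\rho_\alpha:\pi_1(M)\to\widetilde{G}$ factoring through $H_1(M;\mathbb{Z})\cong\mathbb{Z}$ that sends the meridian $\mu$ to a rotation lift of angle $2\pi\alpha$ and the longitude $\lambda$ to the identity. Because $\xi$ is a root of $\Delta(t)$, standard Alexander-module / Heusener--Porti type deformation theory provides a non-abelian infinitesimal deformation class $[u]\in H^1(\pi_1(M);\mathfrak{sl}_2\mathbb{R}_{\mathrm{Ad}\,\rho_\alpha})$, and because the root is \emph{simple}, the associated quadratic obstruction vanishes, so $[u]$ integrates to an analytic arc $\rho_t$ of representations with $\rho_0=\rho_\alpha$ and $\rho_t$ irreducible for $t\neq 0$. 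Restricting to $\pi_1(\partial M)$, composing with $\mathrm{trans}$, and tracking lifts to $\widetilde{G}$ produces the desired analytic arc $\gamma:(-\varepsilon,\varepsilon)\to EL_{\widetilde{G}}(M)$ with $\gamma(0)=(\alpha,0)$.

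The main obstacle is exactly the step where longitudinal rigidity would otherwise have been invoked: showing that $\gamma'(0)$ has a nonzero longitudinal component. The approach is to use the $\mathbb{Z}$-homology solid torus hypothesis to compute this derivative intrinsically. Since $\lambda\in[\pi_1(M),\pi_1(M)]$, the infinitesimal change of $\mathrm{trans}(\widetilde{\rho}_t(\lambda))$ at $t=0$ is captured by evaluating the cocycle $u$ on $\lambda$, and via Fox calculus this evaluation is controlled by $\Delta'(\xi)$; simplicity of $\xi$ as a root forces $\Delta'(\xi)\neq 0$, so the evaluation is nonzero and $\gamma'(0)\notin T_{(\alpha,0)}L_0$. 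Once non-horizontality is established, the conclusion that $M(r)$ is orderable for rational $r\in(-a,0)\cup(0,a)$ with $a$ sufficiently small follows from the same transversality argument used to derive Theorem~\ref{CDThm1} in Culler--Dunfield.
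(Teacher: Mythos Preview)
The paper does not contain a proof of this statement; it is quoted in the introduction as a result of Herald and Zhang \cite{HZ}, and the paper then moves on to its own analogues (Theorems \ref{Thm_Alex} and \ref{Galois}) in the boundary-hyperbolic setting. So there is no ``paper's own proof'' against which to compare your proposal.

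That said, your outline is the right shape for a Herald--Zhang type argument: build the Heusener--Porti arc of irreducible representations out of the abelian character attached to a simple unit-circle root, push it to $EL_{\widetilde G}(M)$, and replace the longitudinal-rigidity step of Theorem \ref{CDThm1} by a direct transversality computation at the abelian endpoint. Where your sketch is thin is precisely that last step. The assertion that ``the infinitesimal change of $\mathrm{trans}(\widetilde\rho_t(\lambda))$ at $t=0$ is captured by evaluating the cocycle $u$ on $\lambda$, and via Fox calculus this evaluation is controlled by $\Delta'(\xi)$'' packages the entire content of the theorem into one sentence. Two issues deserve care. First, the deforming cocycle lies in the weight-$\pm 2$ summands $\mathbb{R}_{\pm}$ of $\mathfrak{sl}_2(\mathbb{R})_{\rho_\alpha}$ (cf.\ the decomposition used in Lemma \ref{deformation}), whereas the translation-number coordinate on $\partial M$ is governed by the weight-$0$ (rotational) direction; so the naive first variation of $\mathrm{trans}(\widetilde\rho_t(\lambda))$ along $u$ may well vanish, and one typically has to pass to the nonabelian reducible representation $\rho^+$ and analyze $\iota^*:H^1(M;\mathfrak{sl}_2\mathbb{R}_{\rho^+})\to H^1(\partial M;\mathfrak{sl}_2\mathbb{R}_{\rho^+})$ (as in the proof of part (c) of Lemma \ref{deformation}) rather than differentiate $\mathrm{trans}$ directly. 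Second, the identification of the relevant boundary evaluation with $\Delta'(\xi)$ is the genuine computation here; it does go through a Fox-calculus / Alexander-module argument, but this needs to be carried out, not asserted. In short: the strategy is correct, but the sentence you flag as ``the main obstacle'' remains an obstacle in your write-up.
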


\subsection{Holonomy extension locus}
In Section \ref{loci} of this paper, I will construct holonomy extension locus which depicts the set of peripherally hyperbolic and parabolic $\widetilde{G}$ representations of a $\mathbb{Q}$-homology solid torus, in contrast to translation extension locus. 

Let $M$ be the complement of a knot in a $\mathbb{Q}$HS or equivalently a $\mathbb{Q}$-homology solid torus. We define the holonomy extension locus of $M$ as follows.

\begin{customDef}{\ref{HEL}}
Let $PH_{\widetilde{G}}(M)$ be the subset of representations whose restriction to $\pi_1(\partial M)$ are either hyperbolic, parabolic, or central. Consider the composition
\begin{displaymath}
PH_{\widetilde{G}}(M)\subset R^{\text{aug}}_{\widetilde{G}}(M)\stackrel{\iota^*}{\longrightarrow} R^{\text{aug}}_{\widetilde{G}}(\partial M)\stackrel{\text{EV}}{\longrightarrow} H^1(\partial M; \mathbb{R})\times H^1(\partial M; \mathbb{Z})
\end{displaymath}
The closure of $\text{EV}\circ \iota^* (PH_{\widetilde{G}}(M))$ in $H^1(\partial M; \mathbb{R})$ is called the holonomy extension locus and denoted $HL_{\widetilde{G}}(M)$.
\end{customDef}

Despite the similarity between the definition of translation extension locus and the holonomy extension locus, there are more technical difficulties to deal with in the peripherally hyperbolic case than the peripherally elliptic case. For instance, the translation extension locus of a $\mathbb{Q}$-homology solid torus is a planer graph in $\mathbb{R}^2$, while the holonomy extension locus has infinitely many sheets each of which is a planer graph.
This major difference comes from the fact that, for peripherally elliptic $\widetilde{G}$ representations, the translation number captures sufficient information we need, while in the hyperbolic case, translation number is far less than enough and other invariants need to be taken into consideration.
The following theorem describes the structure of a holonomy extension locus.

Suppose $\lambda$ is the homological longitude of $M$, with its order in $H_1(M; \mathbb{Z})$ being $n$. 
Define $k_M=\min \{ -\chi(S) | S\subset M \text{ a connected incompressible surface bounding } n\lambda\}$. Let $i,j$ be integers.

\begin{customthm}{\ref{graph}}
The holonomy extension locus $HL_{\widetilde{G}}(M)=\bigsqcup_{i,j\in\mathbb{Z}}H_{i,j}(M)$, $-\frac{k_M}{n}\leq j\leq \frac{k_M}{n}$ is a locally finite union of analytic arcs and isolated points. It is invariant under the affine group $D_{\infty}(M)$ with quotient homeomorphic to a finite graph with finitely many points removed. Each component $H_{i, j}(M)$ contains at most one parabolic point and has finitely many ideal points locally.

The locus $H_{0,0}(M)$ contains the horizontal axis $L_0$, which comes from representations to $\widetilde{G}$ with abelian image.
\end{customthm}

In Section \ref{examples}, plots of the holonomy extension loci of several $\mathbb{Q}$-homology solid tori will be shown. The relation between left-orderability of Dehn filling and holonomy extension locus will be demonstrated. From these examples, we will see that the holonomy extension locus provides different information about left-orderability from translation extension locus. This motivates us to obtain two theorems in Section \ref{5}.

\subsection{Main result of this paper}
Using holonomy extension loci, I study $\mathbb{Q}$HSs coming from Dehn filling on $\mathbb{Q}$-homology solid torus and construct intervals of left-orderable Dehn fillings. The following are the two main applications of the results of this paper.

\begin{customthm}{\ref{Thm_Alex}}
Suppose $M$ is the exterior of a knot in a $\mathbb{Q}$-homology $3$-sphere that is longitudinal rigid. If the Alexander polynomial $\Delta_M$ of $M$ has a simple positive real root $\xi\neq 1$, then there exists a nonempty interval $(-a, 0]$ or $[0, a)$ such that for every rational $r$ in the interval, the Dehn filling $M(r)$ is orderable. 
\end{customthm}

\begin{customthm}{\ref{Galois}}
Suppose $M$ is a hyperbolic $\mathbb{Z}$-homology solid torus. Assume the longitudinal filling $M(0)$ is a hyperbolic mapping torus of a homeomorphism of a genus $2$ orientable surface and its holonomy representation has a trace field with a real embedding at which the associated quaternion algebra splits. Then every Dehn filling $M(r)$ with rational $r$ in an interval $(-a, 0]$ or $[0, a)$ is orderable.
\end{customthm}

\subsection*{Acknowledgements}
The author was supported by NSF grant DMS-1510204 and DMS-1811156 of the United States, and NRF Mid-Career Researcher Program  (Grant No. 2018R1A2B6004003) of the Republic of Korea.
The author gratefully thanks her advisor Nathan Dunfield for introducing her to this problem and providing numerous helpful advice, and thank Steven Boyer and Ying Hu for their advice.
The author would also like to thank Marc Culler and Nathan Dunfield for allowing her to use their program which produces many of the graphs of holonomy extension loci in this paper and teaching her how it works.
Finally, the author would like to thank the referee and the editor for pointing out the mistakes and giving all the detailed advice to make this paper more readable.

\section{Background}
In the L-space conjecture \cite{BGW}, we study $\mathbb{Q}$-homology (and also $\mathbb{Z}$-homology) $3$-spheres, abbreviated as $\mathbb{Q}$HS ($\mathbb{Z}$HS). They are Dehn fillings on $\mathbb{Q}$-homology ($\mathbb{Z}$-homology) solid tori, where a $\mathbb{Q}$-homology ($\mathbb{Z}$-homology) solid torus is a compact $3$-manifold with a torus boundary whose rational (integral) homology groups are the same as those of a solid torus.

\subsection{Preliminaries in graph theory}
To study holonomy extension locus, we need some basic definitions from graph theory.
We call a graph finite if both of its edge set and vertex set are finite. In fact, a holonomy extension locus could be viewed as the union of infinite sheets, each of which is a planer graph but still slightly different from a finite graph. It contains both a finite graph part and finitely many branches going to infinity. So we need some proper notion to describe it, and we can use the notion finite graph with finitely many points removed.

\subsection{Representation Variety and Character Variety}\label{variety}
To study representations of the fundamental group, we need some tools from algebraic geometry.

An affine algebraic set is defined to be the zeros of a set of polynomials. In this paper, we also need real semialgebraic sets \cite[Chapter 3]{realAG}, which are defined by polynomial inequalities.
The dimension of a real semialgebraic set is equal to its topological dimension.
An affine algebraic variety is an irreducible affine algebraic set.

With these notions, we can define the representation variety and character variety of a $3$-manifold $M$.
We are interested in representations into Lie groups PSL$_2\mathbb{C} \simeq \text{PGL}_2\mathbb{C}$ and PSL$_2\mathbb{R}$.
The set of PSL$_2\mathbb{C}$ representations, $\text{Hom}(\pi_1(M),\text{PSL}_2\mathbb{C})$ is an affine algebraic set in some $\mathbb{C}^n$ equipped with Zariski topology. We call it the PSL$_2\mathbb{C}$ representation variety of $M$ and denote it by $R(M)$. Similarly we can define $R(\partial M)=\text{Hom}(\pi_1(\partial M),\text{PSL}_2\mathbb{C})$. The group PSL$_2\mathbb{C}$ acts on $R(M)$ by conjugation, so we can consider the geometric invariant theory (GIT) quotient $R(M)//\text{PSL}_2\mathbb{C}$, which we denote by $X(M)$. It is called the PSL$_2\mathbb{C}$ character variety of $M$.

Recall $G=\text{PSL}_2\mathbb{R}$, $\widetilde{G}=\widetilde{\text{PSL}_2\mathbb{R}}$.
Similarly we can consider the $G$ representation variety $R_G(M)$ (and $R_G(\partial M)$) and $\widetilde{G}$ representation variety $R_{\widetilde{G}}(M)$ (and $R_{\widetilde{G}}(\partial M)$). Also we define the $G$ character variety $X_G(M)$ to be the geometric invariant theory quotient $R_G(M)//\text{PGL}_2\mathbb{R}$. Both $R_G(M)$ and $X_G(M)$ are real algebraic varieties.

A rational map $f: X \rightarrow Y$ between two irreducible varieties is called dominant if $f(X)$ contains a non-empty open subset in $Y$. It is called birational if it is dominant, and if there is another dominant rational map $g: Y \rightarrow X$ such that $g\circ f = \text{id}_X$ and $f\circ g =\text{id}_Y$ as rational maps. The readers could refer to standard textbooks on algebraic geometry like \cite[Chapter I, Section 4]{Hartshorne}, for definitions of other related terminologies.
Let $f: \widehat{X}(M)\rightarrow X(M)$ be a birational map with $\widehat{X}(M)$ a smooth projective curve. Then $\widehat{X}(M)$ is called the smooth projectivization of $X(M)$. Points in $\widehat{X}(M)-f^{-1}(X(M))$ are called ideal points.
To each ideal point, we can associate incompressible surfaces to it. See \cite{CCGLS} for more details.

\subsection{$\widetilde{\text{PSL}_2\mathbb{R}}$}\label{psl2rtilde}
Consider the Lie group $SU(1, 1) = \left\{
\begin{pmatrix}
\alpha & \beta\\
\overline{\beta} & \overline{\alpha} \\
\end{pmatrix}
|\ |\alpha|^2 - |\beta|^2 = 1 \right\}$, which manifests as the isometry group of the hyperbolic plane in the disc model. So there is an isomorphism between SU$(1,1)$ and SL$_2\mathbb{R}$, which is the isometry group in the upper half-plane model.
We can parameterize $SU(1, 1)$ by $(\gamma, \omega)$, with $\gamma = -\overline{\beta}/\alpha\in \mathbb{C}$ and $\omega = \text{arg} \alpha$ defined modulo $2\pi$.
Consequentially SL$_2\mathbb{R}$ can be described as $\{(\gamma, \omega)\ |\ |\gamma|<1, -\pi \leq \omega<\pi\}$. As the universal cover of SL$_2\mathbb{R}$ and $G=\text{PSL}_2\mathbb{R}$, $\widetilde{G}=\widetilde{\text{PSL}_2\mathbb{R}}$ is also a Lie group and can be described as $\{(\gamma, \omega)\in \mathbb{C}\times \mathbb{R}\ |\ |\gamma|<1, -\infty<\omega<\infty\}$ with group operation given by:

\begin{equation}\label{multiplication}
\begin{split}
&(\gamma, \omega)(\gamma', \omega')= \\ &\left((\gamma+\gamma'e^{-2i\omega})(1+\bar{\gamma}\gamma'e^{-2i\omega})^{-1}, \omega+\omega'+\frac{1}{2i}\log{(1+\bar{\gamma}\gamma'e^{-2i\omega})(1+\gamma\bar{\gamma}'e^{2i\omega})^{-1}}
\right).
\end{split}
\end{equation}
This multiplication formula could be checked using the correspondence between $(\gamma, \omega)$ and
$\begin{pmatrix}
\alpha & \beta\\
\overline{\beta} & \overline{\alpha} \\
\end{pmatrix}$. 
So we have a copy of $\mathbb{R}$ sitting inside $\widetilde{G}$ as an abelian subgroup.

The following properties of $\widetilde{G}$ can be found in \cite[Section 2]{Khoi}. 
The universal cover of $S^1$ is $\mathbb{R}$, where $S^1$ can be viewed as lifted to unit length intervals. Being the universal cover of $G$ which acts on $S^1=P^1(\mathbb{R})$ by M\"{o}bius transformation, $\widetilde{G}$ acts on $\mathbb{R}$ faithfully so it is left-orderable. For elements in $\widetilde{G}$, define the translation number to be
\begin{displaymath}
 \text{trans}(\widetilde{g})=\lim_{n\to\infty}\frac{\widetilde{g}^n(x)-x}{n} \text{ for some } x\in \mathbb{R}.
\end{displaymath}
It's independent of the choice of $x$. The proof of the existence of the limit and more properties of the translation number could be found in \cite[Section 5]{Ghys}. The translation number is also called rotation number in some other occasions.

Let $A\in \text{SL}_2\mathbb{R}$, $A\neq \pm Id$. Then
$A$ is called elliptic if $|\text{trace}(A)|<2$ and in this case $A$ is conjugate to a matrix of the form
\begin{displaymath}
\left[\begin{matrix}\cos(\alpha) & \sin(\alpha)\\ -\sin(\alpha)&\cos(\alpha)\end{matrix}\right],  0\leq \alpha < 2\pi.
\end{displaymath}
The matrix $A$ is called parabolic if $|\text{trace}(A)|=2$ and it is conjugate to a matrix of the form
\begin{displaymath}
\pm\left[\begin{matrix}1 & 2u\\ 0 & 1\end{matrix}\right], -\infty < u < \infty.
\end{displaymath}
The matrix $A$ is called hyperbolic if $|\text{trace}(A)|>2$ and in this case it is conjugate to a matrix of the form
\begin{displaymath}
\left[\begin{matrix} a & 0\\ 0 & a^{-1}\end{matrix}\right], a\neq 0.
\end{displaymath}

Elements of SU$(1, 1)$ are classified in the same way via the identification $\text{SU}(1, 1)\simeq \text{SL}_2\mathbb{R}$. We then call an element of $\widetilde{G}$ elliptic, parabolic or hyperbolic if it covers an element of the corresponding type in SU$(1, 1)$.
By Lemma 2.1 in \cite{Khoi}, conjugacy classes in $\widetilde{G}$ can be presented as

\begin{itemize}
\item elliptic: $(0,\alpha)$, with $-\infty< \alpha/2\pi <\infty$ the translation number of elements in the conjugacy class.

\item parabolic: $\displaystyle(\frac{iu}{1+iu}, \tan^{-1}(u)+2k\pi)$ or $\displaystyle(\frac{iu}{1+iu}, \tan^{-1}(u)+\pi+2k\pi)$ , with $u\in \mathbb{R}$ and $k\in \mathbb{Z}$ the translation number of elements in the conjugacy class.

\item hyperbolic: $\displaystyle(\frac{a-a^{-1}}{a+a^{-1}}, 2k\pi)$ with $a>0$ or $\displaystyle(\frac{a-a^{-1}}{a+a^{-1}}, \pi+2k\pi)$  with $a<0$. And $k\in\mathbb{Z}$ is the translation number in both cases.
\end{itemize}
In particular, if $\widetilde{g}$ is conjugate to $(0, 2k\pi)$ or $(0, (2k+1)\pi)$, then $\widetilde{g}$ is called central, with $k\in \mathbb{Z}$ the translation number.

\subsection{Augmented Representation Variety and Character Variety}
As a subgroup of PSL$_2\mathbb{C}$, $G$ acts on $P^1(\mathbb{C})$ by the M\"{o}bius transformation as well as on $S^1=P^1(\mathbb{R})\subset P^1(\mathbb{C})$.
An element in PSL$_2\mathbb{C}$ has at least one fixed point when acting on $P^1(\mathbb{C})$. When there is more than one fixed point, we sometimes need to specify which one we are using.
So we consider the augmented representation variety and augmented character variety. More details could be found in \cite[Section 10]{augment}.

A subgroup of $G$ may not have a common fixed point on $P^1(\mathbb{C})$, but an abelian subgroup does. In fact, a nontrivial abelian subgroup of $G$ contains only one type of elliptic, hyperbolic or parabolic elements, and consequently has either one (if the subgroup contains parabolic elements) or two common fixed points(if the subgroup contains hyperbolic or elliptic elements) on $P^1(\mathbb{C})$.

Let $R^{\text{aug}}_{G}(M)$ be the subvariety of $R_{G}(M) \times P^1(\mathbb{C})$ consisting of pairs $(\rho, z)$, where $z$ is a fixed point of $\rho(\pi_1(\partial M))$. Let $X^{\text{aug}}_{G}(M)$ be the GIT quotient of $R^{\text{aug}}_{G}(M)$ under the diagonal action of $G$ by conjugation and M\"{o}bius transformations. There is a natural regular map $\pi: X^{\text{aug}}_{G}(M)\rightarrow X_{G}(M)$ which forgets the second factor.

Notice that a fixed point of a matrix in $G$ can also be viewed as its eigenvector. Eigenvalues of images of peripherally hyperbolic and parabolic representations play an important part in the definition of the holonomy extension locus in Section \ref{loci}. We need the augmented character variety $X^{\text{aug}}_G(M)$ so that given $\gamma \in \pi_1(\partial M)$, we can define a regular function $e_{\gamma}$ which sends $[(\rho, z)]$ to the square of the eigenvalue of $\rho(\gamma)$ corresponding to $z$. In contrast, on $X_G(M)$  such functions could not be defined because the two eigenvalues of $\rho(\gamma)$ cannot be distinguished. 

The fiber of $\pi: X^{\text{aug}}_{G}(M)\rightarrow X_{G}(M)$ contains two points except at $[\rho]$ where $\rho|_{\pi_1(\partial M)}$ is parabolic (fiber has one point), or $\rho|_{\pi_1(\partial M)}$ is trivial (fiber is isomorphic to $P^1(\mathbb{C})$).

\subsection{Augmented $\widetilde{\text{PSL}_2\mathbb{R}}$  Representations}
Similarly, we construct augmented $\widetilde{\text{PSL}_2\mathbb{R}}$ representations.

As a subgroup of PSL$_2\mathbb{C}$, $G$ acts on $P^1(\mathbb{C})$. There is a natural action of $\widetilde{G}$ on $P^1(\mathbb{C})$ by projecting to $G$. So hyperbolic and elliptic elements of $\widetilde{G}$ have two fixed points while parabolic elements have one fixed point on $P^1(\mathbb{C})$.
An abelian subgroup of $\widetilde{G}$ has at least one fixed point on $P^1(\mathbb{C})$. So $\rho(\pi_1(\partial M))$ has at least one fixed point.

Consider the following subset of $\widetilde{G}\times P^1(\mathbb{C})$,
\begin{displaymath}
\text{Aug}(\widetilde{G})=\{(\widetilde{A}, v)|\widetilde{A} \in \widetilde{G}, v\in P^1(\mathbb{C}) \text{ is a fixed point of } \widetilde{A}\}.
\end{displaymath}
Denote by $A\in G$ the projection of $\widetilde{A}\in \widetilde{G}$. Notice that $v$ is in fact a fixed point of $A$ on $P^1(\mathbb{C})$.
Then for any element $(\widetilde{A}, v)$ in Aug$(\widetilde{G})$ with $\widetilde{A}$ hyperbolic, we can use $(\frac{a-a^{-1}}{a+a^{-1}}, k\pi)$ as the representative of the conjugacy class of $\widetilde{A}$ in $\widetilde{G}$, where $a$ is the eigenvalue of $A$ corresponding to $v$. The sign of $a$ doesn't matter since $\frac{a-a^{-1}}{a+a^{-1}}$ is an even function.

We can now construct the augmented $\widetilde{G}$ representation variety of $M$.
Let $R^{\text{aug}}_{\widetilde{G}}(M)$ be the subvariety of $R_{\widetilde{G}}(M) \times P^1(\mathbb{C})$ consisting of pairs $(\widetilde{\rho}, z)$ with $z$ a fixed point of $\widetilde{\rho}(\pi_1(\partial M))$.
Similarly, define the augmented $\widetilde{G}$ representation variety of $\partial M$. Let $R^{\text{aug}}_{\widetilde{G}}(\partial M)$ be the subvariety of $R_{\widetilde{G}}(\partial M) \times P^1(\mathbb{C})$ consisting of pairs $(\widetilde{\rho}, z)$ with $z$ a fixed point of $\widetilde{\rho}(\pi_1(\partial M))$.

There is a natural projection from $R^{\text{aug}}_{\widetilde{G}}(-)$ to $R_{\widetilde{G}}(-)$ forgetting the second factor.
We call a representation in $R_{\widetilde{G}}(\partial M)$ elliptic, hyperbolic or parabolic if its image in $\widetilde{G}$ contains an element of the corresponding type, and call it central if its image contains only central elements.
We call a representation in $R^{\text{aug}}_{\widetilde{G}}(\partial M)$ elliptic, hyperbolic, parabolic or central if its projection to $R_{\widetilde{G}}(\partial M)$ is of the corresponding type.

\section{Holonomy extension locus}\label{loci}
In this section, we define the holonomy extension locus, show its structure and explain how it works.

\subsection{Definition and Main Property}

\begin{definition}
For hyperbolic element $\widetilde{g}\in \widetilde{G}$, take $v\in P^1(\mathbb{C})$ to be a fixed point of $\widetilde{g}$. Define $ev: \text{Aug}(\widetilde{G})\longrightarrow \mathbb{R}\times\mathbb{Z}$, $(\widetilde{g}, v)\mapsto(\ln(|a|), \text{trans}(\widetilde{g}))$, where $a$ is the eigenvalue of $g$ (the projection of $\widetilde{g}$ in $G$) corresponding to the eigenvector $v$.

For parabolic elements, define $ev: \text{Aug}(\widetilde{G})\longrightarrow \mathbb{R}\times\mathbb{Z}$, taking $\widetilde{g}$ to $(0, \text{trans}(\widetilde{g}))$.
\end{definition}

\begin{lemma}\label{eig}
The map ev$(-, v)$ is a group homomorphism when restricted to hyperbolic or parabolic abelian subgroup of $\widetilde{G}$, with $v\in P^1(\mathbb{C})$ any fixed point of the subgroup. As a consequence, ev$((\widetilde{\rho}(-), v)): \pi_1(\partial M)\rightarrow \mathbb{R}\times \mathbb{Z}$ is a group homomorphism for $\widetilde{\rho}$ hyperbolic or parabolic, where $v$ is a fixed point of $\widetilde{\rho}(\pi_1(\partial M))$.
\end{lemma}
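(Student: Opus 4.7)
The plan is to check the homomorphism property in each coordinate of $\mathbb{R} \times \mathbb{Z}$ separately, and then deduce the consequence for $\widetilde{\rho}$ by composition. For the second coordinate, I would invoke the standard fact that $\text{trans}: \widetilde{G} \to \mathbb{R}$ is a homogeneous quasi-morphism which restricts to a genuine homomorphism on any abelian subgroup; combined with the conjugacy-class description recalled in Section~\ref{psl2rtilde} (where the translation numbers of hyperbolic and parabolic elements are labelled by $k \in \mathbb{Z}$), this simultaneously gives additivity and confirms that the image lies in $\mathbb{Z}$ rather than $\mathbb{R}$.

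The substantive content is therefore additivity in the first coordinate. In the hyperbolic case, the key input is that two commuting hyperbolic elements of $G = \text{PSL}_2\mathbb{R}$ share the same pair of fixed points on $P^1(\mathbb{C})$; hence every element of the abelian subgroup fixes the chosen $v$. The chain rule then gives
\[
(g_1 g_2)'(v) \;=\; g_1'(g_2(v))\,g_2'(v) \;=\; g_1'(v)\,g_2'(v).
\]
Writing $g_i'(v) = a_i^2$, we see $(g_1 g_2)'(v) = (a_1 a_2)^2$, so $a_1 a_2$ is a valid choice of square root for the product, and $\ln|a_1 a_2| = \ln|a_1| + \ln|a_2|$. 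Since only $\ln|a|$ is recorded, the sign ambiguity of the square root is irrelevant. The parabolic case is easier: every element of a parabolic abelian subgroup fixes the common unique fixed point with derivative $1$, so the first coordinate is identically $0$.

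Finally, because $\pi_1(\partial M) \cong \mathbb{Z}^2$ is abelian, $\widetilde{\rho}(\pi_1(\partial M))$ is an abelian subgroup of $\widetilde{G}$, and when $\widetilde{\rho}$ is hyperbolic or parabolic in the sense defined above this subgroup has the form needed for the first part to apply. The map $\text{ev}(\widetilde{\rho}(-), v)$ is then the composition of the homomorphism $\widetilde{\rho}|_{\pi_1(\partial M)}$ with $\text{ev}(-, v)$, hence a homomorphism itself. The only genuine obstacle is the bookkeeping of square roots and fixed points; everything else is a direct application of the chain rule together with the classical properties of the translation number.
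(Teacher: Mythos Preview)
Your proof is correct and follows essentially the same strategy as the paper: split $\text{ev}$ into its two coordinates, use that $\text{trans}$ restricts to a homomorphism on abelian subgroups, and verify multiplicativity of the first coordinate via the action on $P^1(\mathbb{C})$. The only cosmetic difference is that the paper packages the first-coordinate argument as a single eigenvalue homomorphism $\text{eig}\colon \text{Stab}(v)\to\mathbb{R}^\times$ on the full stabilizer, whereas you unpack the same multiplicativity via the chain rule $(g_1g_2)'(v)=g_1'(v)g_2'(v)$; these are equivalent formulations of the same fact.
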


\begin{proof}
Any nontrivial hyperbolic or parabolic abelian subgroup of $\widetilde{G}$ has at least one fixed point in $P^1(\mathbb{C})$ and let $v$ be any one of them.
Consider the stabilizer group Stab$(v)\subset \text{SL}_2\mathbb{R}$ of $v$.
We can define a homomorphism eig: $\text{Stab}(v)\longrightarrow \mathbb{R}^{\times}$ which takes $g\in \text{Stab}(v)$ to  $|a|$ where $gv=av$. Since $\pm I$ is the kernel, this homomorphism descends to a homomorphism from the stabilizer group of $v$ in $G$ to $\mathbb{R}^{\times}$ which we will still call eig. As trans is also a homomorphism and ev$(\widetilde{g}, v)=(\ln(\text{eig}(g)), \text{trans}(\widetilde{g}))$ for any $\widetilde{g}\in \widetilde{G}$ where $g\in G$ is the projection, it follows that ev$(-, v)$ preserves the group structure of hyperbolic or parabolic abelian subgroup of $\widetilde{G}$.

When $\widetilde{\rho}$ is hyperbolic (or parabolic), $\widetilde{\rho}(\pi_1(\partial M))$ becomes an abelian hyperbolic (or parabolic resp.) subgroup of $\widetilde{G}$, with $v$ a fixed point.
So being the composite of two homomorphisms $\widetilde{\rho}$ and ev$(-, v)$, ev$((\widetilde{\rho}(-), v)): \pi_1(\partial M)\rightarrow \mathbb{R}\times \mathbb{Z}$ is also a group homomorphism.

\end{proof}

Identifying Hom$(\pi_1(\partial M), \mathbb{R}\times\mathbb{Z})$ with $H^1(\partial M; \mathbb{R})\times H^1(\partial M; \mathbb{Z})$, we can view ev$((\widetilde{\rho}(-), v))$ as living in $H^1(\partial M; \mathbb{R})\times H^1(\partial M; \mathbb{Z})$. Let $M$ be an irreducible $\mathbb{Q}$-homology solid torus, and let $\iota: \partial M \rightarrow M$ be the inclusion map. With the above lemma, we can now define:
\begin{definition}
Let $PH_{\widetilde{G}}(M)$ be the subset of representations whose restriction to $\pi_1(\partial M)$ are either hyperbolic, parabolic, or central.
Define $EV:  R^{\text{aug}}_{\widetilde{G}}(\partial M)\longrightarrow H^1(\partial M; \mathbb{R})\times H^1(\partial M; \mathbb{Z})$ by $(\widetilde{\rho}, v)\mapsto ev((\widetilde{\rho}(-), v))$ on $\iota^*(PH_{\widetilde{G}}(M))$, where $\iota^*$ is the restriction $R^{\text{aug}}_{\widetilde{G}}(M)\longrightarrow R^{\text{aug}}_{\widetilde{G}}(\partial M)$ of representations of $\pi_1(M)$ to $\pi_1(\partial M)$.
\end{definition}

\begin{lemma}\label{inj}
Fix $v\in P^1(\mathbb{C})$. Let $H_v$ be the set of hyperbolic elements of $\widetilde{G}$ that fix $v$. Then any two elements of $H_v$ with the same image under ev$(-, v)$ are conjugate in $\widetilde{G}$.
\end{lemma}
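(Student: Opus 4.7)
The plan is to bring both elements into a common normal form inside a simply connected lift of $\text{Stab}_G(v)$ and then read off conjugacy from the ev data. First, since a hyperbolic element of $G=\text{PSL}_2\mathbb R$ can fix only real points, we have $v\in P^1(\mathbb R)$; conjugating both $\widetilde g_i$ by any lift of an element of $G$ sending $v\mapsto\infty$ reduces the problem to $v=\infty$. The crucial structural fact I would use is that $H_0:=\text{Stab}_G(\infty)$ is the $ax+b$ group (classes of upper triangular matrices with positive leading entry), which is simply connected; its preimage in $\widetilde G$ therefore splits as a disjoint union $\bigsqcup_{n\in\mathbb Z} z^n\widetilde H_0^{\text{id}}$, where $z$ generates the central kernel $\ker(\widetilde G\to G)\cong\mathbb Z$ and $\widetilde H_0^{\text{id}}$ is the unique section of $\widetilde G\to G$ over $H_0$ through the identity.

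Each $\widetilde g\in H_v$ then factors uniquely as $\widetilde g=z^n h$ with $h\in\widetilde H_0^{\text{id}}$ the canonical lift of the projection $g\in G$. Next I would show that $\text{trans}(h)=0$ on any hyperbolic $h\in\widetilde H_0^{\text{id}}$, using continuity of $\text{trans}$, connectedness of $\widetilde H_0^{\text{id}}$ through the identity, and the integer-valuedness of $\text{trans}$ on hyperbolic elements (already part of the paper's classification). Together with $\text{trans}(z)\neq 0$—which holds because the faithful $\widetilde G$-action on $\mathbb R$ makes $z$ a nonzero translation of $\mathbb R$, it being a nontrivial deck transformation of $\mathbb R\to P^1(\mathbb R)$—and the centrality of $z$, this gives $\text{trans}(\widetilde g)=n\cdot\text{trans}(z)$, so $n$ is determined by the translation number.

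Now suppose $\widetilde g_1,\widetilde g_2\in H_v$ share the ev pair $(\ln|a|,k)$. Equality of $\ln|a|$ forces $|g_1'(\infty)|=|g_2'(\infty)|$, pinning the diagonal part of each $g_i$ to a common $\text{diag}(\lambda,\lambda^{-1})$ with $\lambda>0$; the two projections therefore differ only in their upper triangular translation entry, and a conjugation in $H_0$ by some $\begin{pmatrix}1&s\\0&1\end{pmatrix}$ intertwines them. Its unique lift $\widetilde s\in\widetilde H_0^{\text{id}}$ satisfies $\widetilde s^{-1}h_1\widetilde s=h_2$, and equality of translation numbers gives $n_1=n_2$, so centrality of $z$ yields $\widetilde s^{-1}\widetilde g_1\widetilde s=z^{n_1}h_2=\widetilde g_2$. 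The main obstacle is really bookkeeping: producing the canonical lift section, pinning down $\text{trans}$ on it, and separating the central factor cleanly; once these are set up, the rest is a routine commutative cancellation.
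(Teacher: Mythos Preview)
Your argument is correct, but it takes a genuinely different route from the paper's. The paper's proof is a two-line appeal to the conjugacy classification of $\widetilde{G}$ already recorded in Section~2.4 (from Khoi): a hyperbolic conjugacy class is determined by the pair $(\text{eig}(g),\text{trans}(\widetilde g))$, and since $\text{ev}(\widetilde g,v)=(\ln(\text{eig}(g)),\text{trans}(\widetilde g))$, equal $\text{ev}$-values force conjugacy. No normal forms, lifts of stabilizers, or explicit conjugating elements are produced.

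Your approach instead bypasses the Khoi classification entirely. Reducing to $v=\infty$, using that $\text{Stab}_G(\infty)$ is the simply connected affine group so that its preimage in $\widetilde G$ splits as $\bigsqcup_n z^n\widetilde H_0^{\text{id}}$, showing $\text{trans}\equiv 0$ on the identity sheet by continuity and integrality, and then conjugating within the sheet by an explicit unipotent lift---all of this works and is self-contained. What you gain is independence from the cited classification and an explicit conjugating element; what you pay is the bookkeeping you yourself flag (the section, the vanishing of $\text{trans}$ on it, separating the central factor). The paper's version is shorter precisely because that bookkeeping has already been absorbed into the classification statement it quotes.
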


\begin{proof}
We will use the homomorphism eig as in the proof of Lemma \ref{eig} and the property that ev$(\widetilde{g}, v)=(\ln(\text{eig}(g)), \text{trans}(\widetilde{g}))$ for any $\widetilde{g}\in \widetilde{G}$ where $g\in G$ is the projection.

Suppose $\widetilde{g}$ and $\widetilde{g}'$ are two elements in $H_v$, and $g$ and $g'$ are their projection in $G$. Then $g$ and $g'$ are conjugate if and only if they share the same set of eigenvalues. So $\widetilde{g}$ and $\widetilde{g}'$ are conjugate if and only if $\text{eig}(g)=\text{eig}(g')$ and $\text{trans}(\widetilde{g})=\text{trans}(\widetilde{g}')$, that is ev$(\widetilde{g}, v)$ = ev$(\widetilde{g}', v)$.
\end{proof}

\begin{definition}\label{HEL}
Consider the composition
\begin{displaymath}
PH_{\widetilde{G}}(M)\subset R^{\text{aug}}_{\widetilde{G}}(M)\stackrel{\iota^*}{\longrightarrow} R^{\text{aug}}_{\widetilde{G}}(\partial M)\stackrel{\text{EV}}{\longrightarrow} H^1(\partial M; \mathbb{R})\times H^1(\partial M; \mathbb{Z}).
\end{displaymath}
The closure of $\text{EV}\circ \iota^* (PH_{\widetilde{G}}(M))$ in $H^1(\partial M; \mathbb{R})\times H^1(\partial M; \mathbb{Z})$ is called the holonomy extension locus of $M$ and denoted $HL_{\widetilde{G}}(M)$.
\end{definition}
We will call a point in $HL_{\widetilde{G}}(M)$ a hyperbolic or parabolic point if it comes from a representation $\widetilde{\rho}\in PH_{\widetilde{G}}(M)$ such that $\widetilde{\rho}|_{\pi_1(\partial M)}$ is hyperbolic or parabolic. In a sense, the holonomy extension locus provides some kind of visualization of peripherally hyperbolic and parabolic $\widetilde{G}$ representations of $M$.

\begin{definition}
We call a point in $HL_{\widetilde{G}}(M)$ an ideal point if it only lies in the closure $\overline{EV\circ \iota^*(PH_{\widetilde{G}}(M))}$ but not in $EV\circ \iota^*(PH_{\widetilde{G}}(M))$, where the closure is taken in $\mathbb{R}^2 \times \mathbb{Z}^2$.
\end{definition}

\begin{lemma}\label{EV0}
Suppose $(\widetilde{\rho}, v)\in R^{\text{aug}}_{\widetilde{G}}(\partial M)$ is hyperbolic or central. If $EV(\widetilde{\rho}, v)(\gamma)=(0, 0)$ for some $\gamma \in \pi_1(\partial M)$, then $\widetilde{\rho}(\gamma)=1$.
\end{lemma}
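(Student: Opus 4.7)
The plan is to split into cases according to whether $\widetilde{\rho}|_{\pi_1(\partial M)}$ is central or hyperbolic, and in each case exploit the explicit formula for $ev$ together with the conjugacy class classification recalled in Section \ref{psl2rtilde}.

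First I would observe that $\widetilde{\rho}(\gamma)$ itself must be either central or hyperbolic. In the central case this is automatic from the definition. In the hyperbolic case, $\widetilde{\rho}(\pi_1(\partial M))$ is an abelian subgroup of $\widetilde{G}$ containing some hyperbolic element $\widetilde{h}$, so in particular $\widetilde{\rho}(\gamma)$ commutes with $\widetilde{h}$. Since the centralizer of a hyperbolic element of $\text{PSL}_2\mathbb{R}$ is the one-parameter subgroup of hyperbolic elements sharing its axis together with the identity, lifting to $\widetilde{G}$ forces $\widetilde{\rho}(\gamma)$ either to be hyperbolic fixing the same two points as $\widetilde{h}$ (and in particular fixing $v$), or to project to the identity in $G$ and thus to be central.

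With this dichotomy in hand, the conclusion follows from direct inspection of $ev$. If $\widetilde{\rho}(\gamma)$ were hyperbolic, then its projection $\rho(\gamma)$ would have derivative $a^2$ at $v$ with $|a| \neq 1$, so the first coordinate $\ln|a|$ of $ev(\widetilde{\rho}(\gamma), v)$ would be nonzero, contradicting the hypothesis. Therefore $\widetilde{\rho}(\gamma)$ must be central, of the form $(0, k\pi)$ for some $k \in \mathbb{Z}$. The second coordinate of $ev$ is then the translation number, which vanishes by hypothesis; reading off the classification in Section \ref{psl2rtilde}, the only central element with translation number zero is $(0,0) = 1$, so $\widetilde{\rho}(\gamma) = 1$.

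I do not anticipate a serious obstacle: the whole argument reduces to the centralizer structure of hyperbolic elements in $\text{PSL}_2\mathbb{R}$ together with the explicit conjugacy class normal forms. The one point that needs care is ruling out elliptic or parabolic elements in the image when $\widetilde{\rho}$ is hyperbolic, which is precisely what the centralizer step accomplishes.
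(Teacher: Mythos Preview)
Your proof is correct and takes essentially the same approach as the paper's, which simply invokes Lemma~\ref{inj} in a single line; you have unpacked that citation into an explicit hyperbolic-versus-central case analysis for $\widetilde{\rho}(\gamma)$, making the argument more self-contained.
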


\begin{proof}
It follows from Lemma \ref{inj} that $ev(\widetilde{\rho}(\gamma), v)=EV(\widetilde{\rho}, v)(\gamma)=(0, 0)$ implies $\widetilde{\rho}(\gamma)$ is conjugate to the identity element of $\widetilde{G}$. So $\widetilde{\rho}(\gamma)=1$.
\end{proof}

Suppose $\lambda$ is the homological longitude of $M$, with its order in $H_1(M; \mathbb{Z})$ being $n$.
Define
\begin{displaymath}
k_M=\min \{ -\chi(S) | S \text{ is a connected incompressible surface of $M$ that bounds } n\lambda\}.
\end{displaymath}

We will use Milnor-Wood inequality in the form of Proposition 6.5 from \cite{CD16}.
\begin{proposition}\label{bounded}
Suppose S is a compact orientable surface with one boundary component. For all $\widetilde{\rho}: \pi_1(S)\rightarrow \widetilde{G}$ one has
\begin{displaymath}
|\text{trans}(\widetilde{\rho}(\delta))|\leq max(-\chi(S), 0) \quad \text{where $\delta$ is a generator of $\pi_1(\partial S)$}.
\end{displaymath}
\end{proposition}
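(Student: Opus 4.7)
The result is the classical Milnor--Wood inequality specialized to $\widetilde{G}$, and the plan is to deduce it from the fact that the translation number is a homogeneous quasi-morphism on $\widetilde{G}$ with small defect. First, I dispose of the trivial case: if $-\chi(S)\leq 0$ then $S$ is a disk, $\pi_1(S)$ is trivial, and the bound holds vacuously. Otherwise $S$ has genus $g\geq 1$, so $\pi_1(S)$ is free on $2g$ generators $a_1,b_1,\ldots,a_g,b_g$ with $\delta=\prod_{i=1}^g[a_i,b_i]$; writing $C_i=\widetilde{\rho}([a_i,b_i])$, the claim reduces to
\[
  |\text{trans}(C_1\cdots C_g)|\;\leq\;2g-1\;=\;-\chi(S).
\]

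The key input is two standard facts about $\text{trans}\colon\widetilde{G}\to\mathbb{R}$. Property (i): $\text{trans}$ is a homogeneous quasi-morphism with defect at most $1$, meaning
\[
  |\text{trans}(\widetilde{x}\widetilde{y})-\text{trans}(\widetilde{x})-\text{trans}(\widetilde{y})|\;\leq\;1 \quad\text{for all }\widetilde{x},\widetilde{y}\in\widetilde{G}.
\]
Property (ii): $\text{trans}$ is invariant under conjugation, and in particular $\text{trans}(\widetilde{x}^{-1})=-\text{trans}(\widetilde{x})$. I would prove (i) by realizing $\widetilde{G}$ concretely as homeomorphisms of $\mathbb{R}$ commuting with the unit translation and comparing orbits of $\widetilde{x}$, $\widetilde{y}$ and $\widetilde{x}\widetilde{y}$ through a single fundamental interval; (ii) is automatic from homogeneity of any quasi-morphism.

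Combining (i) and (ii) gives $|\text{trans}([\widetilde{x},\widetilde{y}])|\leq 1$: write $[\widetilde{x},\widetilde{y}]=(\widetilde{x}\widetilde{y}\widetilde{x}^{-1})\widetilde{y}^{-1}$, use (ii) to replace $\text{trans}(\widetilde{x}\widetilde{y}\widetilde{x}^{-1})$ by $\text{trans}(\widetilde{y})$, and apply (i) together with $\text{trans}(\widetilde{y}^{-1})=-\text{trans}(\widetilde{y})$. A routine induction on $g$, applying the defect bound once per extra factor, then yields
\[
  |\text{trans}(C_1\cdots C_g)|\;\leq\;\sum_{i=1}^g|\text{trans}(C_i)|+(g-1)\;\leq\;g+(g-1)\;=\;2g-1,
\]
which is the desired bound.

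The main obstacle is the sharp defect-$1$ estimate (i); once this is in place, the commutator bound and the inductive product bound are essentially formal. In a fully written proof I would either cite the classical treatment of the translation number as a quasi-morphism on $\widetilde{\mathrm{Homeo}}_+(S^1)$ (following Ghys' survey on groups acting on the circle) or give the short direct argument sketched above using the fundamental-domain description of $\widetilde{G}$ from Section \ref{psl2rtilde}.
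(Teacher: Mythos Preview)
Your argument is correct. The paper itself does not supply a proof of this proposition at all: it is simply quoted as Proposition~6.5 of \cite{CD16} and used as a black box (the Milnor--Wood inequality for $\widetilde{G}$). Your route through the defect-$1$ quasi-morphism property of $\mathrm{trans}$ is the standard one and is essentially how the result is established in the cited source; the commutator bound $|\mathrm{trans}([\widetilde{x},\widetilde{y}])|\leq 1$ and the telescoping induction over $g$ are exactly the right ingredients, and your handling of the disk case matches the $\max(-\chi(S),0)$ clause. The only point worth tightening in a final write-up is the justification of property~(i): the defect bound for $\mathrm{trans}$ on $\widetilde{\mathrm{Homeo}}_+(S^1)$ deserves either a careful fundamental-domain computation or a precise citation (e.g.\ Ghys), since everything else rests on it.
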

Applying this proposition, we see immediately that $\displaystyle \left|\text{trans}(\widetilde{\rho}(\lambda))\right|\leq \frac{k_M}{n}$.

In the next theorem, we will show that 
$$ HL_{\widetilde{G}}(M)=\bigsqcup_{i,j\in\mathbb{Z}}H_{i,j}(M), \quad -\frac{k_M}{n} \leq j \leq \frac{k_M}{n}. $$ 
Each $H_{i,j}(M):=HL_{\widetilde{G}}(M)\cap(\mathbb{R}^2\times \{i\}\times \{j\})\subset \mathbb{R}^2$ is a finite union of analytic arcs and isolated points.
Denote the infinite dihedral group $\mathbb{Z}\rtimes \mathbb{Z}/2\mathbb{Z}$ by $D_{\infty}(M)$. Then $D_{\infty}(M)$ acts on $\mathbb{R}^2\times \mathbb{Z}^2$ by translating $(x,y,i, j)$ to $(x,y,i+nk, j)$ for $i, j, k\in \mathbb{Z}$, and taking $(x,y,i, j)$ to $(-x,-y,-i,-j)$ by reflecting about $(0,0,0,0)$.
We will show that as a subset of $\mathbb{R}^2\times \mathbb{Z}^2$, $HL_{\widetilde{G}}(M)$ is invariant under the action of $D_{\infty}(M)$.

Define $L_r$ to be line of slope $-r$ going through the origin in $\mathbb{R}^2$. Then $L_0$ is the $x$-axis. 
Now we can state the theorem.

\begin{theorem}\label{graph}
The holonomy extension locus $HL_{\widetilde{G}}(M)=\bigsqcup_{i,j\in\mathbb{Z}}H_{i,j}(M)$, $\displaystyle -\frac{k_M}{n}\leq j\leq \frac{k_M}{n}$ is a locally finite union of analytic arcs and isolated points. It is invariant under the affine group $D_{\infty}(M)$ with quotient homeomorphic to a finite graph with finitely many points removed. Each component $H_{i, j}(M)$ contains at most one parabolic point and has finitely many ideal points locally.

The locus $H_{0,0}(M)$ contains the horizontal axis $L_0$, which comes from representations to $\widetilde{G}$ with abelian image.
\end{theorem}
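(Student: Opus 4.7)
The approach follows Culler and Dunfield's proof of Theorem \ref{CDThm0} (their Theorem 4.3 in \cite{CD16}), carefully adapted to the augmented, boundary-hyperbolic setting. First I would fix a $\mathbb{Z}$-basis $(\mu,\lambda)$ of $\pi_1(\partial M)$ with $\lambda$ the homological longitude, and identify $H^1(\partial M;\mathbb{R})\times H^1(\partial M;\mathbb{Z})$ with $\mathbb{R}^2\times\mathbb{Z}^2$. By Lemma \ref{eig}, $EV(\widetilde\rho,v)$ is determined by the tuple
\[
\bigl(\ln|\mathrm{eig}\,\rho(\mu)|,\ \ln|\mathrm{eig}\,\rho(\lambda)|,\ \mathrm{trans}\,\widetilde\rho(\mu),\ \mathrm{trans}\,\widetilde\rho(\lambda)\bigr).
\]
The classification of hyperbolic, parabolic, and central conjugacy classes recalled in Section \ref{psl2rtilde} shows that the last two entries are always integers, yielding the decomposition $HL_{\widetilde G}(M)=\bigsqcup_{i,j}H_{i,j}(M)$. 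The bound $|j|\leq k_M/n$ then follows immediately from the Milnor--Wood inequality (Proposition \ref{bounded}) applied to the incompressible surface realizing $k_M$, using $n\cdot\mathrm{trans}\,\widetilde\rho(\lambda)=\mathrm{trans}\,\widetilde\rho(n\lambda)$.

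Next I would verify the analytic-arc structure. The augmented variety $R^{\text{aug}}_{\widetilde G}(M)$ is obtained from the real-algebraic $R^{\text{aug}}_G(M)$ by selecting discrete lift data on each connected component, so locally it is a real-analytic set, and $PH_{\widetilde G}(M)$ is cut out by semi-analytic conditions on the traces of $\rho(\mu)$ and $\rho(\lambda)$. The functions $\ln|\mathrm{eig}|$ and $\mathrm{trans}$ are real-analytic on this locus, and $EV\circ\iota^*$ factors through $X^{\text{aug}}_G(M)$, which is an algebraic set of dimension at most one since $M$ is a $\mathbb{Q}$-homology solid torus. Restricting to fixed $(i,j)$ therefore exhibits each $H_{i,j}(M)$ as a locally finite union of real-analytic arcs and isolated points. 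Points added in the closure correspond to ideal points of $\widehat X(M)\setminus f^{-1}(X(M))$, of which there are only finitely many, giving the ``finitely many ideal points locally'' claim.

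For the $D_\infty(M)$-invariance, the $\mathbb{Z}$-factor acts by post-composing $\widetilde\rho$ with a central element $(0,k\pi)\in\widetilde G$, which shifts the $i$-coordinate by $nk$ while preserving $j$ and the eigenvalues; the $\mathbb{Z}/2$ involution is induced by $(\widetilde\rho,v)\mapsto(\widetilde\rho^{-1},v)$ and sends $(x,y,i,j)\mapsto(-x,-y,-i,-j)$. Combined with the bound $|j|\leq k_M/n$ and the finite-type structure of $X^{\text{aug}}_G(M)$, this identifies the countably many slices $\{H_{i,j}\}$ into finitely many $D_\infty$-orbits; together with the local finiteness of each slice, the quotient is homeomorphic to a finite graph, with the finitely many ideal points removed.

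It remains to verify the parabolic and $L_0$ statements. A parabolic boundary representation sends $\mu$ and $\lambda$ into a common parabolic subgroup of $\widetilde G$, whose eigenvalues are identically $1$; hence every parabolic point of $H_{i,j}(M)$ sits at $(0,0)$, a single point. For $L_0$, an abelian representation of $\pi_1(M)$ factors through $H_1(M;\mathbb{Z})$ and sends $\lambda$ to a torsion element, whose natural lift to the torsion-free abelian hyperbolic subgroup $\mathbb{R}\subset\widetilde G$ is trivial; varying $\rho(\mu)$ through the one-parameter family of hyperbolic elements with translation number zero then traces out $L_0\subset H_{0,0}(M)$. I expect the main technical obstacle to be the semi-analyticity of the slices $H_{i,j}$: one must patch the discrete $\widetilde G$-lift data across components of $R^{\text{aug}}_G(M)$ consistently with the map $EV$, which is precisely where the augmented, boundary-hyperbolic framework makes the argument more delicate than in the translation-extension case of \cite{CD16}.
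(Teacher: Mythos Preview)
Your overall strategy matches the paper's: both reduce to the one-dimensionality of the image of the character variety in $X(\partial M)$, invoke Milnor--Wood for the bound on $j$, and handle the parabolic and $L_0$ claims exactly as you do. However, two of your steps do not work as written.

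First, your $\mathbb{Z}/2\mathbb{Z}$ involution $(\widetilde\rho,v)\mapsto(\widetilde\rho^{-1},v)$ is not well-defined on $PH_{\widetilde G}(M)$: for nonabelian $\pi_1(M)$ the assignment $\gamma\mapsto\widetilde\rho(\gamma)^{-1}$ is an antihomomorphism, not a homomorphism, so it does not land in $R_{\widetilde G}(M)$. The paper instead conjugates by the orientation-reversing element $f(x)=-x$ of $\mathrm{Homeo}(\mathbb{R})$, which normalizes $\widetilde G$ and gives a genuine representation $f\widetilde\rho f^{-1}$ satisfying $\mathrm{ev}(f\widetilde\rho(\gamma)f^{-1},-v)=-\mathrm{ev}(\widetilde\rho(\gamma),v)$. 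Likewise, ``post-composing $\widetilde\rho$ with a central element'' is not a homomorphism; the correct $\mathbb{Z}$-action is twisting by $\varphi\in H^1(M;\mathbb{Z})$ via $\gamma\mapsto\widetilde\rho(\gamma)\,s^{\varphi(\gamma)}$, which is where the shift by multiples of $n$ in the $i$-coordinate actually comes from.

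Second, the dimension claim is misplaced: $X^{\mathrm{aug}}_G(M)$ itself need not be one-dimensional for a $\mathbb{Q}$-homology solid torus. What is true, and what the paper uses via an explicit map $c\colon H^1(\partial M;\mathbb{R})\times H^1(\partial M;\mathbb{Z})\to X_G(\partial M)$, is that the \emph{restriction image} $\iota^*(X(M))\subset X(\partial M)$ has complex dimension at most one \cite[Lemma~2.4]{CD16}. This also undermines your finiteness argument: to conclude the quotient is a finite graph (with finitely many points removed) rather than merely locally finite, the paper argues separately (Lemma~\ref{component}) that $PH_G^{\mathrm{lift}}(M)$ is a real semialgebraic set and hence has finitely many connected components; your appeal to ``the finite-type structure of $X^{\mathrm{aug}}_G(M)$'' does not supply this.
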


\begin{remark}
If we assume the manifold $M$ is small, i.e. it has no closed essential surface, then there is no ideal point in $HL_{\widetilde{G}}(M)$. The proof is similar to \cite[Lemma 6.8]{CD16}. See Lemma \ref{small}.
\end{remark}

\begin{lemma}\label{invariant}
The holonomy extension locus $HL_{\widetilde{G}}(M)$ is invariant under $D_{\infty}(M)$.
\end{lemma}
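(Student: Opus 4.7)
The plan is to verify invariance of $HL_{\widetilde{G}}(M)$ under each of the two generators of $D_\infty(M) = \mathbb{Z} \rtimes \mathbb{Z}/2\mathbb{Z}$, namely the translation $T \colon (x,y,i,j) \mapsto (x,y,i+n,j)$ and the reflection $R \colon (x,y,i,j) \mapsto (-x,-y,-i,-j)$. For each, I would produce an operation on pairs $(\widetilde{\rho}, v) \in R^{\text{aug}}_{\widetilde{G}}(M)$ with $\widetilde{\rho} \in PH_{\widetilde{G}}(M)$ whose effect under $EV \circ \iota^*$ realizes the desired symmetry; since $HL_{\widetilde{G}}(M)$ is the closure of the image of $EV\circ\iota^*$, this suffices.

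For translation invariance I would use a central twist. Let $z \in Z(\widetilde{G})$ generate the center with $\text{trans}(z) = 1$; since $z$ projects to the identity in $G$, one has $ev(z,v) = (0,1)$ for every $v$. Let $\phi_0 \colon \pi_1(M) \to \mathbb{Z}$ generate $H^1(M;\mathbb{Z}) \cong \mathbb{Z}$. Since $\lambda$ is torsion in $H_1(M;\mathbb{Z})$, $\phi_0(\lambda) = 0$, and a standard Poincar\'e duality computation for a $\mathbb{Q}$-homology solid torus normalizes $\phi_0$ so that $\phi_0(\mu) = n$. Set $\widetilde{\rho}'(\gamma) := \widetilde{\rho}(\gamma) \cdot z^{\phi_0(\gamma)}$; this is a homomorphism because $z$ is central, and because $z$ lies in the kernel of $\widetilde{G} \to G$ the boundary type and the chosen fixed point $v$ are unchanged. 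The multiplicativity in Lemma \ref{eig} then gives $EV(\widetilde{\rho}',v) = EV(\widetilde{\rho},v) + \phi_0 \cdot (0,1)$, which is a shift by exactly $(0,0,n,0)$.

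For the reflection I would use an orientation-reversing outer automorphism of $\widetilde{G}$. Given $(\widetilde{\rho}, v)$, pick an orientation-reversing involution $J \in PGL_2\mathbb{R} \setminus G$ of $\mathbb{H}^2$ that \emph{swaps} the two fixed points of $\rho|_{\pi_1(\partial M)}$ when $\widetilde{\rho}$ is boundary-hyperbolic (the reflection across the geodesic perpendicular to the axis works), fixes the unique fixed point in the boundary-parabolic case, and is arbitrary in the central case. Conjugation by $J$ is an automorphism $\sigma$ of $G$, which lifts uniquely, since $\widetilde{G}$ is simply connected, to an automorphism $\widetilde{\sigma}$ of $\widetilde{G}$ fixing the identity. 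Form $(\widetilde{\sigma} \circ \widetilde{\rho}, v)$: since $J$ preserves the fixed-point set, $v$ is still a fixed point, and the boundary type is preserved. Two computations then finish the argument: (i) $\widetilde{\sigma}$ acts on $\widetilde{G} \subset \mathrm{Homeo}(\mathbb{R})$ as conjugation by an orientation-reversing lift $\widetilde{J}$ of $J|_{S^1}$, and a direct limit calculation shows that conjugation by an orientation-reversing homeomorphism of $\mathbb{R}$ negates the translation number; (ii) the chain rule, using $J(v) = v'$ the other fixed point, gives $\sigma(\rho(\gamma))'(v) = \rho(\gamma)'(v')$, which is the reciprocal of $\rho(\gamma)'(v)$, so $\ln|a| \mapsto -\ln|a|$. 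Combining, the effect on $EV$ is exactly $(x,y,i,j)\mapsto(-x,-y,-i,-j)$.

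The principal obstacle is the eigenvalue calculation in the reflection step: the sign change in $x,y$ requires $J$ to \emph{actually swap} (not merely preserve as a set) the two fixed points, which forces the case-by-case choice of $J$ according to the boundary type and makes the construction depend on $\widetilde{\rho}$. A secondary subtlety is verifying the normalization $\phi_0(\mu) = n$ for the generator of $H^1(M;\mathbb{Z})$, which relies on the precise structure of $H_1(M;\mathbb{Z})$ for a $\mathbb{Q}$-homology solid torus.
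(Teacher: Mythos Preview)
Your proposal is correct. The translation-invariance step is essentially identical to the paper's: both twist by a generator of $H^1(M;\mathbb{Z})$ using the central element, and the paper likewise records the effect as a shift by $\iota^*(H^1(M;\mathbb{Z}))$ in the $H^1(\partial M;\mathbb{Z})$-factor (so your flagged ``secondary subtlety'' about $\phi_0(\mu)=n$ is exactly the setup issue the paper leaves implicit in identifying the $D_\infty(M)$-translation with $\iota^*(H^1(M;\mathbb{Z}))$).

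For the reflection, however, you take a genuinely different route. The paper uses a \emph{single, global} orientation-reversing map $f(x)=-x$ on $\mathbb{R}$ (independent of $\widetilde{\rho}$) and passes from the pair $(\widetilde{\rho},v)$ to $(f\widetilde{\rho}f^{-1},-v)$, i.e.\ it conjugates by the fixed $f$ and simultaneously moves the chosen fixed point to its image under the induced circle reflection. You instead choose, for each boundary-hyperbolic $\widetilde{\rho}$, a reflection $J$ that actually swaps the two ideal fixed points of the axis, conjugate by its lift $\widetilde{\sigma}$, and \emph{keep} the same fixed point $v$. Your chain-rule computation $(JgJ^{-1})'(v)=g'(v')$ together with $g'(v)g'(v')=1$ gives the sign flip on $\ln|a|$ very cleanly, and the standard argument that conjugation by an orientation-reversing lift negates translation number handles the $\mathbb{Z}$-coordinates. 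The trade-off is that your construction is $\rho$-dependent and forces the case split (hyperbolic/parabolic/central) you already noted, whereas the paper's single $f$ treats all representations uniformly; on the other hand, your eigenvalue calculation is more transparent than the paper's somewhat terse assertion about the derivative of $\pi(f\widetilde{g}f^{-1})$ at ``$-v$''. Either approach suffices, and nothing in your argument is missing.
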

\begin{proof}[Proof]
We will show the image $I$ of $PH_{\widetilde{G}}(M)$ under EV$\circ \iota^*$ is invariant under $D_{\infty}(M)$.
Take $(\widetilde{\rho}, v)\in PH_{\widetilde{G}}(M)$ and let $t=\text{EV}\circ\iota^*(\widetilde{\rho}, v)$ be the corresponding point in $I$. Let $s$ be the generator of the center of $\widetilde{G}$ which is isomorphic to $\mathbb{Z}$ and take any $\varphi\in H^1(M ;\mathbb{Z})$. Then $PH_{\widetilde{G}}(M) \ni \varphi\cdot\widetilde{\rho}: \gamma\mapsto\widetilde{\rho}(\gamma)s^{\varphi(\gamma)}$ is another lift of $\pi\circ\widetilde{\rho}$, where $\pi: \widetilde{G}\rightarrow G$ is the projection.
It's easy to see that $\widetilde{\rho}(\pi_1(\partial M))$ and $\varphi\cdot\widetilde{\rho}(\pi_1(\partial M))$ share the same fixed point $v$.
We can check that for any $\gamma\in\pi_1(M)$, we have ev$(\varphi\cdot\widetilde{\rho}(\gamma), v)=$ ev$(\widetilde{\rho}(\gamma)s^{\varphi(\gamma)}, v)=$ ev$(\widetilde{\rho}(\gamma), v)+(0,\varphi(\gamma))$. 
So EV$\circ\iota^*(\varphi\cdot\widetilde{\rho}, v)=$ EV$\circ\iota^*(\widetilde{\rho}s^{\varphi}, v)=$ EV$\circ\iota^*(\widetilde{\rho}, v)+(0,\varphi)$. It follows that $I$ is invariant under translation by elements of $\iota^*(H^1(M; \mathbb{Z}))\subset H^1(\partial M; \mathbb{R})$.

Next, we will show $HL_{\widetilde{G}}(M)$ is invariant under reflection about the origin in $\mathbb{R}^2\times \mathbb{Z}^2$.
Define $f$ to be the element in Homeo$(\mathbb{R})$ taking $x\in \mathbb{R}$ to $-x$, and consider the conjugate action of $f$ on $\widetilde{G}$. The group $\widetilde{G}$ is preserved under this conjugation because $\pi(f\widetilde{g}f^{-1})$ has the same action as $\pi(\widetilde{g}^{-1})$ on $S^1$ for any $\widetilde{g}\in \widetilde{G}$.
Suppose $a$ is a square root of the derivative of $\pi(g)$ at $v$, then $a^{-1}$ is a square root of the derivative of $\pi(\widetilde{g}^{-1})$ at $v$ and $a^{-1}$ is a square root of the derivative of $\pi(f\widetilde{g}f^{-1})$ at $-v$.
Moreover we can check that
\begin{displaymath}
 \text{trans}(f\widetilde{g}f^{-1})=\lim_{n\to\infty}\frac{(f\widetilde{g}f^{-1})^n(0)-0}{n}= \lim_{n\to\infty}\frac{f\widetilde{g}^n(-0)-0}{n}=-\text{trans}(\widetilde{g}).
\end{displaymath}
This shows that ev$(\widetilde{\rho}(\gamma), v)=-\text{ev}(f\widetilde{\rho}f^{-1}(\gamma), -v)$ and it follows that EV$\circ\iota^*(\widetilde{\rho}, v)=-$EV$(f\widetilde{\rho}f^{-1}, -v)$. Given such an $f$, the image of $(f\widetilde{\rho}f^{-1}, -v)$ in $I$ is $-t$, proving the invariance.
\end{proof}

As a consequence of Lemma \ref{invariant}, we can now look at the quotient $PL_{\widetilde{G}}(M)=HL_{\widetilde{G}}(M)/D_{\infty}(M)$. In fact $\displaystyle PL_{\widetilde{G}}(M)=\sqcup_{-n<i<n, -k_M \leq j\leq k_M} H_{i,j}(M)/(\mathbb{Z}/2\mathbb{Z})$, where $\mathbb{Z}/2\mathbb{Z}$ acts on the disjoint union by taking $(x,y)\in  H_{i,j}(M)$ to $(-x,-y)\in  H_{-i,-j}(M)$. In particular, $\mathbb{Z}/2\mathbb{Z}$ acts on $H_{0,0}(M)$ via reflection about the origin.

\begin{lemma}\label{component}
$PL_{\widetilde{G}}(M)$ has finitely many connected components. In particular, each $H_{i,j}(M)$ has finitely many connected components.
\end{lemma}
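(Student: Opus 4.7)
The strategy is to exhibit each $H_{i,j}(M)$ as the closure of the image of a semialgebraic set under a semialgebraic map, and then invoke the Tarski--Seidenberg theorem that a semialgebraic set has only finitely many connected components. Since
$$PL_{\widetilde{G}}(M)=\bigsqcup_{-n<i<n,\ -k_M\leq j\leq k_M}H_{i,j}(M)/(\mathbb{Z}/2\mathbb{Z})$$
is a finite disjoint union and a $\mathbb{Z}/2\mathbb{Z}$ quotient preserves finiteness of components, it is enough to show that each $H_{i,j}(M)$ has finitely many connected components.

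First I would pass from $\widetilde{G}$ to $G$. The augmented variety $R^{\text{aug}}_G(M)$ is a real algebraic variety, and the subset $S\subset R^{\text{aug}}_G(M)$ consisting of those $(\rho,v)$ whose boundary restriction is hyperbolic, parabolic, or trivial is cut out by the semialgebraic conditions $|\text{trace}(\rho(\mu))|\geq 2$ and $|\text{trace}(\rho(\lambda))|\geq 2$ (with the trivial-image case included as a closed stratum), so $S$ has only finitely many connected components. The forgetful map $R^{\text{aug}}_{\widetilde{G}}(M)\to R^{\text{aug}}_G(M)$ sends $PH_{\widetilde{G}}(M)$ into $S$. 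On each connected component of $S$, the lifting obstruction to $\widetilde{G}$ is locally constant, and wherever a continuous local lift exists the functions $\text{trans}(\widetilde{\rho}(\mu))$ and $\text{trans}(\widetilde{\rho}(\lambda))$ are integer-valued and continuous, hence locally constant; any two lifts over the same base differ by some $\varphi\in H^1(M;\mathbb{Z})$, which shifts the translation number pair by $\iota^*\varphi$. Therefore, for fixed integers $(i,j)$, the set $S_{i,j}\subset S$ of augmented $G$-representations admitting a lift with $(\text{trans}(\widetilde{\rho}(\mu)),\text{trans}(\widetilde{\rho}(\lambda)))=(i,j)$ is a (possibly empty) union of connected components of $S$, in particular a semialgebraic set with finitely many components.

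By Lemma \ref{eig}, on $S_{i,j}$ the map $\text{EV}$ is simply
$$(\rho,v)\longmapsto\bigl(\ln|\text{eig}(\rho(\mu))|,\ \ln|\text{eig}(\rho(\lambda))|\bigr)\in\mathbb{R}^2,$$
depending only on $(\rho,v)$ via the algebraic eigenvalue function at the fixed point $v$, and so is semialgebraic. Therefore $\text{EV}(S_{i,j})\subset\mathbb{R}^2$ is semialgebraic, its closure $H_{i,j}(M)$ is semialgebraic, and by Tarski--Seidenberg has finitely many connected components. The hard part will be Step 2: verifying rigorously that ``admits a lift with translation numbers $(i,j)$'' is genuinely a union-of-components condition on $S$, which requires tracking the lifting obstruction and the translation number pair simultaneously across the hyperbolic, parabolic, and trivial boundary strata, together with the $H^1(M;\mathbb{Z})$-action on lifts; modulo this, the proof is a routine application of real semialgebraic geometry.
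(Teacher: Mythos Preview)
Your approach is essentially the paper's (both follow Lemma~6.2 of \cite{CD16}): pass to the $G$-level, observe that the boundary-hyperbolic/parabolic/trivial locus is semialgebraic, argue that the set admitting a lift with prescribed translation numbers is a union of its connected components, and then push forward. The paper works with $R_G(M)$ rather than the augmented variety, but your choice is arguably more natural since $\text{EV}$ genuinely needs the fixed-point data.

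There is one slip. You assert that $\text{EV}$, written as $(\rho,v)\mapsto(\ln|\text{eig}(\rho(\mu))|,\ln|\text{eig}(\rho(\lambda))|)$, ``is semialgebraic.'' It is not: $\ln$ is not a semialgebraic function, so Tarski--Seidenberg does not apply to $\text{EV}(S_{i,j})$ directly, and $H_{i,j}(M)$ need not be semialgebraic. The fix is immediate and there are two clean options. Either (i) note that $(\rho,v)\mapsto(e_\mu(\rho,v),e_\lambda(\rho,v))\in\mathbb{R}_{>0}^2$ \emph{is} semialgebraic, so its image has finitely many components, and then $(\ln,\ln):\mathbb{R}_{>0}^2\to\mathbb{R}^2$ is a homeomorphism, preserving the component count, after which closure can only decrease it; or (ii) drop the semialgebraic claim for the image entirely and, as the paper does, simply use that $S_{i,j}$ already has finitely many components (being a union of components of a semialgebraic set), that $\text{EV}$ is continuous, and that continuous images and closures of connected sets are connected. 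The paper takes route~(ii).

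The ``hard part'' you flag---that $S_{i,j}$ is a union of components of $S$---is handled in the paper in one line, invoking continuity of the translation number (and implicitly that liftability is open and closed on $PH_G(M)$); your more careful accounting of the lifting obstruction and the $H^1(M;\mathbb{Z})$-action on lifts is a reasonable way to justify the same point.
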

\begin{proof}[Proof]
The proof works similarly as Lemma 6.2 of \cite{CD16}.

Let $\Pi: R_{\widetilde{G}}(M)\rightarrow R_{G}(M)$ be the map between representation varieties induced by $\pi: \widetilde{G}\rightarrow G$. Let $PH_{G}(M)$ be the subset of $R_{G}(M)$ consisting of representations whose restriction to $\pi_1(\partial M)$ consist only of hyperbolic, parabolic and trivial elements.
The set $PH_{G}(M)$ is a subset of the real algebraic set $R_{G}(M)$ cut out by polynomial inequalities. It follows that $PH_{G}(M)$ is a real semialgebraic set.

Let $PH^{\text{lift}}_{G}(M)\subset PH_{G}(M)$ be the image of $PH_{\widetilde{G}}(M)$ under $\Pi$. By continuity of the translation number, $PH^{\text{lift}}_{G}(M)$ is a union of connected components of $PH_{G}(M)$. Moreover $PH^{\text{lift}}_{G}(M)\subset PH_{G}(M)$ is the quotient of $PH_{\widetilde{G}}(M)$ under the action of $H^1(M, \mathbb{Z})$ and $\Pi$ is the covering map. So it is also a real semialgebraic set and thus has finitely many connected components.

The action of $H^1(M, \mathbb{Z})$ on $PH_{\widetilde{G}}(M)$ then induces an action of $\mathbb{Z}\leq D_{\infty}(M)$ on $HL_{\widetilde{G}}(M)$. Let $\Pi^{-1}(PH^{\text{lift}}_{G}(M))$ be any sheet in the covering of $PH^{\text{lift}}_{G}(M)$.
So $PL_{\widetilde{G}}(M)=\overline{EV\circ \iota^*(\Pi^{-1}(PH^{\text{lift}}_{G}(M)))}/(\mathbb{Z}/2\mathbb{Z})$, and thus has finitely many components. Let $PH^j_G(M)$ be the subset of $PH^{\text{lift}}_{G}(M)$ consisting of representations with translation number of the homological longitude being $j$. Then $PH^{\text{j}}_{G}(M)$ is a finite union of connected components of $PH_{G}(M)$. It follows that $H_{i,j}(M)=\overline{EV\circ \iota^*(\Pi^{-1}(PH^{\text{j}}_{G}(M)))}$ has finitely many components, where $\Pi^{-1}(PH^{\text{j}}_{G}(M))$ is any sheet in the covering of $PH^{\text{j}}_{G}(M)$.
\end{proof}

\begin{proof}[Proof of Theorem \ref{graph}]
First notice that the index $j$ is bounded, which follows from Proposition \ref{bounded}.

Define $c: H^1(\partial M; \mathbb{R})\times H^1(\partial M; \mathbb{Z})\rightarrow X_G(\partial M), (f_1, f_2)\mapsto \text{character of } \rho$, where $\rho$ is given by $\rho(\mu)=\left[\begin{matrix}e^{f_1(\mu)} & 0\\0 &e^{-f_1(\mu)}\end{matrix}\right],
 \rho(\lambda)=\left[\begin{matrix}e^{f_1(\lambda)} & 0\\0 &e^{-f_1(\lambda)}\end{matrix}\right]$.

Consider the dual basis $\{\mu^*, \lambda^*, m^*, l^*\}$ for $H^1(\partial M; \mathbb{R})\times H^1(\partial M; \mathbb{Z})$, where
\begin{equation} \label{dual_basis}
\mu^*(p\mu+q\lambda)=p, \lambda^*(p\mu+q\lambda)=q, m^*(p\mu+q\lambda)=p \text{ and } l^*(p\mu+q\lambda)=q
\end{equation}
for any $p\mu+q\lambda\in H_1(\partial M)$.
Take $(x,y,i,j) \in HL_{\widetilde{G}}(M) \subset H^1(\partial M; \mathbb{R})\times H^1(\partial M; \mathbb{Z})$. If we use trace-squared coordinates on $X_G(\partial M)$, we get
\begin{align*}
c(x, y, i, j)&=(\text{tr}(\rho(\mu)), \text{tr}(\rho(\lambda)), \text{tr}( \rho(\mu)\rho(\lambda) ) ) \\
               &=(e^{2x}+e^{-2x}+2, e^{2y}+e^{-2y}+2, e^{2x+2y}+e^{-2x-2y}+2).
\end{align*}
It is easy to check that $c(-x, -y, -i, -j)=c(x, y, i, j)$ and $c(x, y, i+n_1, j+n_2)=c(x, y, i, j)$, where $n_1$ and $n_2$ are integers.

Consider the diagram
\begin{displaymath}
\xymatrix{
   PH_{\widetilde{G}}(M) \ar[r]^-{\text{EV}\circ \iota^*} \ar[d] & H^1(\partial M; \mathbb{R})\times H^1(\partial M; \mathbb{Z})  \ar[d]^c\\
  X_G(M)  \ar[r]^{\iota^*}  & X_G(\partial M)  \\
  }
\end{displaymath}
The vertical map $c$ maps $HL_{\widetilde{G}}(M)$ into $\overline{\iota^*(X_G(M))}$. Being the image of a real algebraic set under a polynomial map, $X_G(M)$ is a real semialgebraic subset of $X_{\mathbb{R}}(M)$. Since $\iota^*(X(M))\subset X(\partial M)$ has complex dimension at most $1$ \cite[Lemma 2.4]{CD16}, then the real semialgebraic set $\overline{\iota^*(X_G(M))}$ has real dimension at most $1$. Moreover $\iota^*(X_G(M))$ is a locally finite graph as $X_G(M)$ is.
Thus, its preimage under $c$ is a locally finite graph invariant under $D_{\infty}(M)$ with analytic edges. So each $H_{i,j}(M)$ and thus $PL_{\widetilde{G}}(M)$ is a locally finite graph and by Lemma \ref{component} it has finitely many connected components. Therefore,  $PL_{\widetilde{G}}(M)$ is homeomorphic to a finite graph with finitely many points removed.

Suppose $D$ is a closed disc in $H^1(\partial M; \mathbb{R})$, then $D\cap H_{i,j}(M)$ lives in a finite graph. Since by Lemma \ref{component} $H_{i,j}(M)$ has finitely many components, then $D\cap H_{i,j}(M)$ also has finitely many components and thus is a finite graph. So $D\cap H_{i,j}(M)$ is the closure of a set of finitely many components in a finite graph and thus contains finitely many ideal points.

Parabolic points can only occur at the origin of each $H_{i,j}(M)$, so there can be at most one parabolic point in each component $H_{i,j}(M)$.

Recall from Section \ref{psl2rtilde} that there is an abelian subgroup of $\widetilde{G}$ that is isomorphic to $\mathbb{R}$.
Consider the diagonal representations in $G$. 
They lift to a one parameter family of abelian representations $\pi_1(M)\rightarrow \widetilde{G}$ by sending the generator of $H_1(M;Z)_{\text{free}}\cong H_1(M;Z)/(\text{torsion})\cong \mathbb{Z}$ to a given element in $\mathbb{R}$. Since the longitude $\lambda$ of $\partial M$ is $0$ in $H_1(M; \mathbb{Z})_{\text{free}}$, this one parameter family of abelian representations give rise to the line $L_0$ in $H_{0,0}(M)$.

\end{proof}

\subsection{Other Properties}

Recall from Section \ref{variety} that $\widehat{X}(M)$ is the smooth projectivization of $X(M)$. 
The following lemma describes some other properties of $HL_{\widetilde{G}}(M)$.
\begin{lemma}[structure of $H_{i,j}(M)$]\label{asymptote}
Suppose for some $i, j$, $H_{i,j}(M)$ contains an arc that continues on to infinity.
Then this arc approaches an asymptotes $y=-rx$ in $\mathbb{R}^2$ as it goes to infinity, where $r$ is the boundary slope of the associated incompressible surface to some ideal point of $\widehat{X}(M)$.
\end{lemma}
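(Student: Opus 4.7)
The plan is to push an escaping arc in $H_{i,j}(M)$ through the trace map $c$ from the proof of Theorem~\ref{graph} into $\iota^*(X_G(M))$, observe that the image must accumulate at an ideal point of the smooth projective model $\widehat{X}(M)$, and then invoke the Culler--Shalen dictionary that links such ideal points to boundary slopes of essential surfaces in $M$.

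First I would fix an analytic arc $A\subset H_{i,j}(M)$ parametrized by $t\mapsto (x(t),y(t))$ with $\|(x(t),y(t))\|\to\infty$. From the explicit formula in the proof of Theorem~\ref{graph}, the image $c(A)\subset\iota^*(X_G(M))\subset X_G(\partial M)$ has coordinates
\[
\bigl(e^{2x}+e^{-2x}+2,\; e^{2y}+e^{-2y}+2,\; e^{2(x+y)}+e^{-2(x+y)}+2\bigr).
\]
Since $|x(t)|$ or $|y(t)|$ is unbounded, at least one of these coordinates blows up, so $c(A)$ escapes every compact subset of $X_G(\partial M)$. But $c(A)$ is contained in the one-dimensional complex affine set $\iota^*(X(M))$, whose points at infinity are exactly the ideal points of $\widehat{X}(M)$; since $A$ is one-dimensional and analytic, after restricting to a terminal subarc I may assume $c(A)$ converges to a single ideal point $\xi\in\widehat{X}(M)\setminus X(M)$.

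Next I would invoke the standard Culler--Shalen correspondence: attached to the ideal point $\xi$ is an essential surface $S\subset M$ with primitive boundary slope $\alpha=p\mu+q\lambda\in H_1(\partial M;\mathbb{Z})$, and this slope is characterized by the property that the trace function $\operatorname{tr}_\alpha$ remains bounded as one approaches $\xi$ in $\widehat{X}(M)$, whereas $\operatorname{tr}_\beta$ has a pole at $\xi$ whenever $\beta$ is not a multiple of $\alpha$. Under the diagonal representation defining $c$, one has $\rho(p\mu+q\lambda)=\operatorname{diag}(e^{px+qy},e^{-(px+qy)})$, so the boundedness of $\operatorname{tr}(\rho(\alpha))$ along $A$ translates to $|px(t)+qy(t)|$ staying bounded. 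Combined with $\|(x(t),y(t))\|\to\infty$, this forces $A$ to asymptote to the line $px+qy=0$, i.e.\ to $y=-rx$ with $r=p/q$, and by construction $r$ is the boundary slope of the surface $S$ associated to $\xi$.

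The main obstacle is the middle step. One must verify carefully that an escaping arc in $H_{i,j}(M)$, which is a subset of a real semialgebraic covering of $\iota^*(X_G(M))$, really produces an escaping path in the complex curve $\iota^*(X(M))$ and hence converges to a single ideal point of $\widehat{X}(M)$; this uses that the arc is one-dimensional and analytic together with local finiteness of ideal points on the smooth projective completion. A secondary point to check is that the Culler--Shalen characterization of the bounded slope at $\xi$, which is a statement about the complex character variety, transfers correctly to the real $\operatorname{PSL}_2\mathbb{R}$ sub-locus along which our arc lies; this is automatic because $\operatorname{tr}_\alpha$ is a regular function on $\widehat{X}(M)$ whose boundedness is detected by any convergent path into $\xi$, real or complex.
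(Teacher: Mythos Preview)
Your approach is essentially the paper's: push the escaping arc through $c$ into $\overline{\iota^*(X_G(M))}$, observe it accumulates at an ideal point $\xi$ of $\widehat{X}(M)$, and invoke the Culler--Shalen characterization of the boundary slope $\alpha=p\mu+q\lambda$ as the unique primitive class for which $\operatorname{tr}^2_\alpha$ stays finite at $\xi$. The paper argues via a sequence of representations $\rho_k$ rather than a parametrized arc, but that is cosmetic.

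There is one step the paper makes explicit that you elide. From boundedness of $\operatorname{tr}(\rho(\alpha))$ you conclude $|px(t)+qy(t)|$ stays bounded and then assert this forces $A$ to asymptote to the line $px+qy=0$. But boundedness alone only confines $A$ to a strip of slope $-r$; to single out the line through the origin you need $px(t)+qy(t)\to 0$. The paper fills this by invoking the result of \cite[Section~5.7]{CCGLS} that the limiting eigenvalue $b$ of $\alpha$ at the ideal point is a root of unity; since $b^2$ is also a limit of eigenvalue--squares of hyperbolic elements of $G$, it is positive real, forcing $b^2=1$ and hence $\ln|b|=0$. That is what gives $px+qy\to 0$ rather than merely bounded. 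Without this ingredient your argument yields the asymptotic direction but not the specific line $y=-rx$.
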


\begin{proof}

The vertical map $c$ in the diagram from the proof of Theorem \ref{graph} maps $HL_{\widetilde{G}}(M)$ into $\overline{\iota^*(X_G(M))}$. 
Suppose $H_{i,j}(M)$ contains an arc $A$ that continues to infinity, then there is a an ideal point $x$ of $\widehat{X}(M)$ that is the limit of a sequence of characters $\{[\rho_k]\}$ in $X(M)$ of hyperbolic representations $\{\rho_k\}$ such that images of lifts $\{\widetilde{\rho_k}\}$ under $\text{EV}\circ\iota^*$ are contained in $A$.
To show this, suppose images of $\{\widetilde{\rho_k}\}$ under $\text{EV}\circ\iota^*$ go to infinity in $HL_{\widetilde{G}}(M)$. Then $\{[\rho_k]\}$ march off to infinity in $X(M)$ as eigenvalues of either $\{\rho_k(\mu)\}$ or $\{\rho_k(\lambda)\}$ go to infinity. Thus by passing to a subsequence, $\{[\rho_k]\}$ converge to an ideal point $x$ of $\widehat{X}(M)$. Notice that traces of elliptic and parabolic elements of $G$ are bounded, by passing to a subsequence, we can assume that $\widetilde{\rho_k}|_{\pi_1(\partial M)}$ are hyperbolic. Moreover, one can choose a sequence of points $\{v_k\}$ where $v_k\in P^1(\mathbb{C})$ is a common fixed point of $\rho_k(\pi_1(\partial M))$ acting on $P^1(\mathbb{C})$. And by passing to a subsequence, we can assume $\{v_k\}$ limits to $v\in P^1(\mathbb{C})$.

By the result in \cite[Section 5.7]{CCGLS}, there exists $\beta\in \pi_1(\partial M)$ such that tr$^2_{\beta}(x)=b^2+b^{-2}+2$ is finite and $\beta=p\mu+q\lambda$, where $r=p/q$ is the boundary slope of the incompressible surface associated to the ideal point $x$.
Then $\lim_{k\to\infty} \text{tr}^2_{\beta}([\rho_k])=b^2+b^{-2}+2$ as $[\rho_k]\rightarrow x$, where $b^2$ is a positive real number as it is the limit of the square of an eigenvalue of a hyperbolic $G$ matrix. Moreover, $b$ has to be a root of unity by \cite[Section 5.7]{CCGLS}. It follows that $b^2=1$, which implies $\lim_{k\to\infty}\rho_k(\beta)=I$. It follows that $\lim_{k\to\infty}\widetilde{\rho_k}(\beta)=\widetilde{I}$, where $\widetilde{I}$ is a lift of $I$ with translation number being $\lim_{k\to\infty} \text{trans}(\widetilde{\rho_k}(\beta))=p \lim_{k\to\infty}\text{trans}(\widetilde{\rho_k}(\mu))+q\lim_{k\to\infty} \text{trans}(\widetilde{\rho_k}(\lambda))=pi+qj$.
Then we can check the slope of the asymptote of the arc containing the sequence of points $\{\text{EV}(\widetilde{\rho_k}, v_k)\}$ in $HL_{\widetilde{G}}(M)$.
It follows from direct computation that  $p\lim_{k\to\infty}\text{EV}(\widetilde{\rho_k}, v_k)(\mu)+q\lim_{k\to\infty}\text{EV}(\widetilde{\rho_k}, v_k)(\lambda)= p\lim_{k\to\infty}\text{ev}(\widetilde{\rho_k}(\mu), v_k)+q\lim_{k\to\infty}\text{ev}(\widetilde{\rho_k}(\lambda), v_k) =\lim_{k\to\infty}\text{ev}(\widetilde{\rho_k}(p\mu+q\lambda), v_k)=\text{ev}(\widetilde{I}, v)=(\ln(|b|)=0, pi+qj)$. So $\lim_{k\to\infty} \text{slope}[\rho_k]=-r$ and thus the curve $A$ is asymptotic to the line of slope $-r$ going through the origin.

\end{proof}

Holonomy extension locus can be viewed as an analog of the A-polynomial which was first introduced in \cite{CCGLS} by Cooper et al.. To explain this relation, we will start with the definition of eigenvalue variety \cite[Section 7]{Tillmann}.

Let $R_U^{\text{aug}}(M)$ be the subvariety of $R^{\text{aug}}(M)$ defined by two equations that set the lower left entries in $\rho(M)$ and $\rho(L)$ to be zero.
Consider the eigenvalue map,
\begin{displaymath}
R_U^{\text{aug}}(M)\rightarrow (\mathbb{C}-0)^2
\end{displaymath}
By taking the closure of the image of this map and discarding zero dimensional components, we get the eigenvalue variety $\mathfrak{E}(M)$ of $M$, which is defined by a principal ideal. A generator for the radical of this ideal is called the A-polynomial. We will call points that are only in the closure but not in the image ideal points.

We are only interested in the intersection of $\mathfrak{E}(M)$ with $\mathbb{R}^2$ as those points come from peripherally parabolic or hyperbolic $G$ representations. The composition $R_{G}^{\text{aug}}\rightarrow R_U^{\text{aug}}(M) \rightarrow \mathbb{R}^2\cap\mathfrak{E}(M)$ gives a map from a peripherally hyperbolic or parabolic $G$ representation $\rho$ of $M$ to eigenvalues of $\rho(\mu)$ and $\rho(\lambda)$, where $\mu$ and $\lambda$ are the meridian and longitude of $\partial M$. This map is similar to but not entirely the same as EV$\circ\iota^*$ defined in \ref{HEL}.

Recall that $M$ is called a small manifold if it contains no closed essential surface. We will prove the following lemma.
\begin{lemma}\label{small}
If $M$ is small, then there is no ideal point in $HL_{\widetilde{G}}(M)$ or $(\mathbb{R}^2-\mathbf{0})\cap\mathfrak{E}(M)$.
\end{lemma}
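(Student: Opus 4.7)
The plan is to adapt the argument of \cite[Lemma 6.8]{CD16} to the boundary-hyperbolic setting. The guiding principle is that an ideal point of $HL_{\widetilde{G}}(M)$ or of $(\mathbb{R}^2-\mathbf{0}) \cap \mathfrak{E}(M)$ must come either from a bounded degeneration in the character variety or from an ideal point of the smooth projectivization $\widehat{X}(M)$. Culler--Shalen theory assigns an essential surface in $M$ to any ideal point of $\widehat{X}(M)$, and smallness forces such a surface to have non-empty boundary, whose slope is then strongly constrained by the limiting eigenvalue data.

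For the first claim, suppose for contradiction that $P = (x, y, i, j)$ is an ideal point of $HL_{\widetilde{G}}(M)$, so that there is a sequence $(\widetilde{\rho}_k, v_k) \in PH_{\widetilde{G}}(M) \times P^1(\mathbb{C})$ with $EV\circ\iota^*(\widetilde{\rho}_k, v_k) \to P$, yet no augmented representation in $PH_{\widetilde{G}}(M)$ maps to $P$. Via the commutative diagram in the proof of Theorem \ref{graph}, the image $c(P)$ sits in the closure of $\iota^*(X_G(M))$. I would split into two cases according to whether $\{[\rho_k]\}$ stays bounded in $X_G(M)$ or escapes to infinity. In the bounded case, after passing to a subsequence, the characters converge to some $[\rho_\infty] \in X_G(M)$, and one can pass to a subsequential limit of augmentations $v_k \to v_\infty \in P^1(\mathbb{C})$. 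Using continuity of the translation number and the fact that a limit of elements of $\widetilde{G}$ fixing $v_k$ fixes $v_\infty$, I would construct a lift $\widetilde{\rho}_\infty$ landing in $PH_{\widetilde{G}}(M)$ whose augmented image under $EV\circ\iota^*$ is exactly $P$, contradicting ideality.

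In the unbounded case, $\{[\rho_k]\}$ converges to an ideal point $\tilde{x}$ of $\widehat{X}(M)$, so Culler--Shalen theory produces a non-empty essential surface $S \subset M$. Smallness of $M$ forbids $S$ from being closed, so $\partial S \neq \emptyset$ and $S$ has a well-defined boundary slope $r = p/q$. Repeating the computation of Lemma \ref{asymptote} verbatim, $\rho_k(p\mu + q\lambda) \to I$ in $G$, and the lifts converge in $\widetilde{G}$ to a central element, which constrains the limiting $EV$-values to run off to infinity along an asymptote of slope $-r$. The main obstacle is that this still only says the sequence escapes to infinity in the direction $-r$; to convert the escape into a genuine contradiction one needs to argue that $P$ lies on an arc of $HL_{\widetilde{G}}(M)$ approaching this asymptote and that this arc is already present in the image of $EV\circ\iota^*$ (rather than merely its closure). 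I would obtain this by using the properness of augmentation together with the analytic arc structure from Theorem \ref{graph}, showing that along such an asymptote each point is realized by an honest hyperbolic representation.

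For the second claim, the argument is parallel but simpler. An ideal point of $\mathfrak{E}(M) \cap (\mathbb{R}^2 - \mathbf{0})$ corresponds to a divergent sequence of eigenvalue pairs $(m_k, \ell_k)$ with associated characters $[\rho_k]$ leaving every compact subset of $X(M)$, hence converging to an ideal point of $\widehat{X}(M)$. Smallness again supplies an incompressible surface with boundary slope $r$, and \cite[Section 5.7]{CCGLS} forces $\rho_k(p\mu + q\lambda) \to I$, so the pair $(m_k, \ell_k)$ satisfies $m_k^p \ell_k^q \to 1$; this identifies the candidate ideal point as the limit along a branch of $\mathfrak{E}(M)$ already present in the image of the eigenvalue map, again contradicting ideality.
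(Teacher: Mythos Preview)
Your dichotomy (characters $[\rho_k]$ stay bounded in $X_G(M)$ versus escape to infinity) and your treatment of the bounded case are exactly what the paper does. The problem is the unbounded case, where you take an unnecessary detour and do not actually close the argument.

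The point you are missing is that an ideal point $P$ of $HL_{\widetilde{G}}(M)$ is by definition a \emph{finite} point of $H^1(\partial M;\mathbb{R})\times H^1(\partial M;\mathbb{Z})$; the sequence $EV\circ\iota^*(\widetilde{\rho}_k,v_k)$ converges to $P$, so the eigenvalues and hence the traces of $\rho_k(\mu)$, $\rho_k(\lambda)$, and indeed of $\rho_k(\gamma)$ for every $\gamma\in\pi_1(\partial M)$, are uniformly bounded. If nevertheless $[\rho_k]$ escapes to an ideal point $\tilde{x}$ of $\widehat{X}(M)$, then \cite[Section 2.4]{CCGLS} says precisely that boundedness of all peripheral traces at $\tilde{x}$ forces the associated essential surface to be \emph{closed}. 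That is an immediate contradiction with smallness, and the paper stops there. Your route---smallness forces $\partial S\neq\emptyset$, then invoke Lemma~\ref{asymptote}---runs the implication backwards: Lemma~\ref{asymptote} assumes the $EV$-values go to infinity and deduces the asymptotic slope, it does not show that a boundary slope forces $EV$-values to diverge. To make your version work you would need the CCGLS fact anyway (some peripheral trace must blow up when the surface has boundary), at which point the contradiction with $P$ being finite is already in hand and the discussion of asymptotes, arcs, and ``properness of augmentation'' is superfluous.

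The same confusion appears in your treatment of $(\mathbb{R}^2-\mathbf{0})\cap\mathfrak{E}(M)$: you write that an ideal point there ``corresponds to a divergent sequence of eigenvalue pairs $(m_k,\ell_k)$,'' but in fact the eigenvalue pairs \emph{converge} to the finite ideal point; it is only the characters $[\rho_k]$ that may diverge in $X(M)$. Once you correct this, the same one-line argument applies: bounded peripheral traces at an ideal point of $\widehat{X}(M)$ force a closed essential surface, contradicting smallness.
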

\begin{proof}
The proof works the same way as in \cite[Lemma 6.8]{CD16}.
Suppose $t_0$ is an ideal point in $HL_{\widetilde{G}}(M)$ (resp. $(\mathbb{R}^2-\mathbf{0})\cap\mathfrak{E}(M)$) and $\{\widetilde{\rho_i}\}\subset PH_{\widetilde{G}}(M)$ is a sequence of $\widetilde{G}$ representations whose images in $HL_{\widetilde{G}}(M)$ (resp. $(\mathbb{R}^2-\mathbf{0})\cap\mathfrak{E}(M)$) converge to $t_0$. Suppose $\{[\rho_i]\}$ is the sequence of corresponding characters in $X_G(M)$. A similar argument shows that by passing to a subsequence, $\{[\rho_i]\}$ lie in a single irreducible component $X'$ of $X(M)$ and $\{[\rho_i]\}$ either limit to a character $\chi$ in $X_G(M)$ or march off to infinity in the noncompact curve $X'$. In the latter case, as both $|\text{tr}(\rho_i(\mu))|$ and $|\text{tr}(\rho_i(\lambda)|$ are bounded above, $|\text{tr}(\rho_i(\gamma))|$ is bounded above for any $\gamma\in \pi_1(\partial M)$. The argument of \cite[Section 2.4]{CCGLS} produces a closed essential surface associated to a certain ideal point of $X'$, contradicting our hypothesis that $M$ is small.

In the case when the $[\rho_i]$ limit to $\chi$ in $X_G (M)$, a similar argument shows that $t_0$ is not actually an ideal point, proving the lemma.

\end{proof}

Finally, we use the following lemma to construct order. We focus on the $H_{0,0}(M)$ sheet of $HL_{\widetilde{G}}(M)$. Recall that $L_r$ is a line through origin in $\mathbb{R}^2$ with slope $-r$. We will see from the proof of Lemma \label{main lemma} why we require the slope to be $-r$ instead of $r$.

\begin{lemma}\label{main lemma}
If $L_r$ intersects $H_{0,0}(M)$ at a nonzero point that not ideal, and assume $M(r)$ is irreducible, then $M(r)$ has left-orderable fundamental group.

\end{lemma}

\begin{proof}
The idea of proof works in the following way, a point in the intersection corresponds to a $\widetilde{G}$ representation $\widetilde{\rho}$ of $\pi_1(M)$ which maps $\gamma \in \pi_1(\partial M)$ to identity, where $\gamma$ is the homology class of simple closed curves of slope $r$ on $\partial M$. Then $\widetilde{\rho}$ becomes a representation of $\pi_1(M(r))$. 

Let $f=(x_1, y_1)$ be a point in $L_r\cap H_{0,0}(M)$ that is different from the origin and not an ideal point by assumption. Then $f$ is not parabolic as parabolic points can only occur at the origin. So there exists a preimage $\widetilde{\rho}\in R_{\widetilde{G}}(M)$ of $f$ which is hyperbolic when restricting to $\pi_1(\partial M)$. Suppose $\gamma \in \pi_1(\partial M)$ realizes slope $r=j/k$, i.e. $\gamma=\lambda^k\mu^j$.
By definition of $L_r: y=-rx$, we have $f(\gamma)=\text{EV}(\widetilde{\rho})(\gamma)=\text{ev}\circ\widetilde{\rho}(\gamma)=(ky_1+jx_1, k\cdot\text{trans}(\lambda)+j\cdot\text{trans}(\mu))=(k(-jx_1/k)+jx, k0+j0)=(0,0)$. The minus sign in the slope of $L_r$ is needed so that the translation number of $\widetilde{\rho}(\gamma)$ becomes $0$. It follows from Lemma \ref{EV0} that $\widetilde{\rho}(\gamma)=1$, so we get an induced representation $\overline{\rho}: \pi_1(M(r))\rightarrow \widetilde{G}$.
As $f$ is different from the origin, then we can always find an element $\eta\in \pi_1(\partial M)$ with slope different from $r$ such that $\overline{\rho}(\eta)\neq 0$, which implies that $\overline{\rho}$ is nontrivial. So we have constructed a nontrivial  $\widetilde{G}$ representation of $ \pi_1(M(r))$.
Since $M(r)$ is irreducible by assumption, it follows from \cite[Theorem 3.2]{BRW} that $\pi_1(M(r))$ is left-orderable.

\end{proof}

\section{Examples}\label{examples}
In this section, I will show some examples of holonomy extension loci. We will see that the holonomy extension locus of a $\mathbb{Q}$-homology solid torus $M$ provides a way of visualizing the set of peripherally hyperbolic and parabolic $\widetilde{G}$ representations of $\pi_1(M)$. Moreover, we will see that together with Lemma \ref{main lemma}, the range of slopes of orderable Dehn filling on $M$ could be determined from looking at the graph of the holonomy extension locus. 

Recall from (\ref{dual_basis}) the definition of the dual basis $\{\mu^*, \lambda^*\}$.
Our first example is the figure eight knot $4_1$, whose Alexander polynomial is $t^2-3t+1$.
\begin{figure}[H]
\center
\includegraphics[width=85mm]{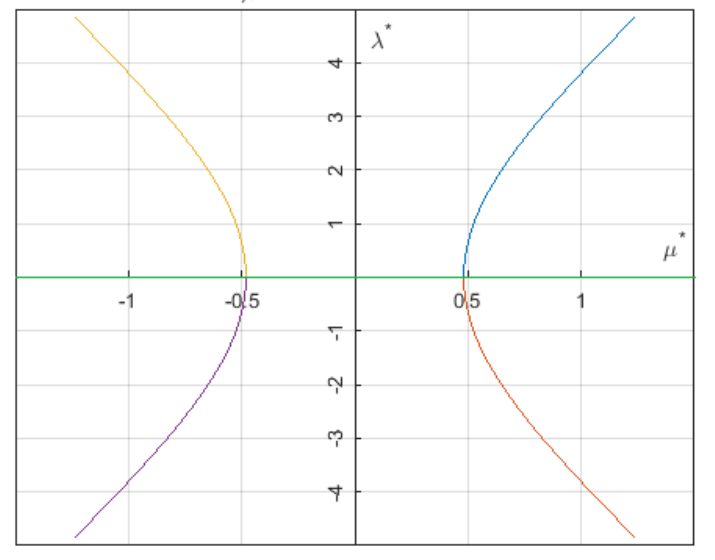}
\caption{Holonomy Extension Locus $HL_{\widetilde{G}}(4_1)$}
\label{figure8}
\fnote{There is nothing interesting going on in the translation extension locus of the figure-eight knot complement as it contains only the $x$-axis $y=0$ coming from abelian representations. The above figures shows its holonomy extension locus which has no other sheets except $H_{0,0}(M)$. 
The figure-eight knot complement has genus 1, so the $2g-1$ bound for translation number $j$ of the longitude is not sharp. 

There are two asymptotes of the graph with slopes $\pm 4$. So fillings on the figure-eight knot complement with slope lying in the interval $(-4,4)$ are orderable, by Lemma \ref{main lemma}. This observation is confirmed by Proposition 10 of \cite{BGW}.}
\end{figure}

Our next example is the $(7, 3)$ two-bridge knot $5_2$. The complement of a two-bridge knot is small \cite[Theorem 1(a)]{2bridge}. So the holonomy extension locus of the two-bridge knot does not have ideal points by Lemma \ref{small}.
\begin{figure}[H]
\centering
\includegraphics[width=60mm]{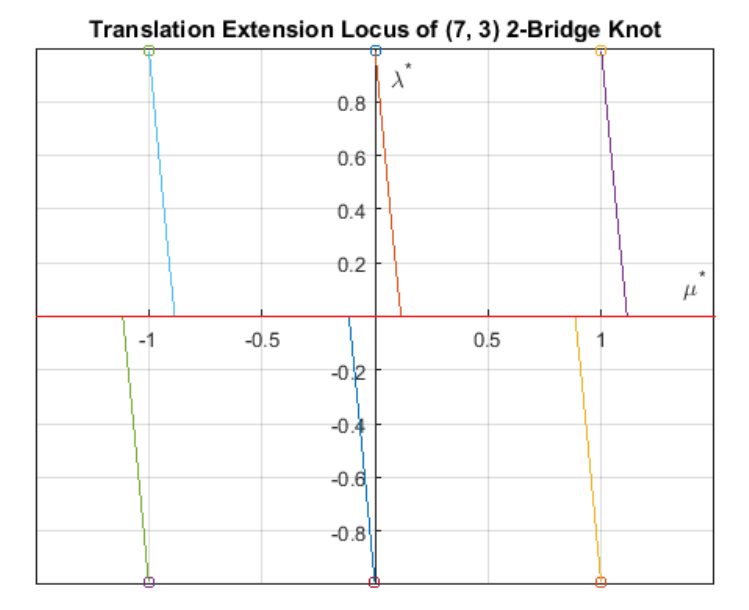}
\includegraphics[width=60mm]{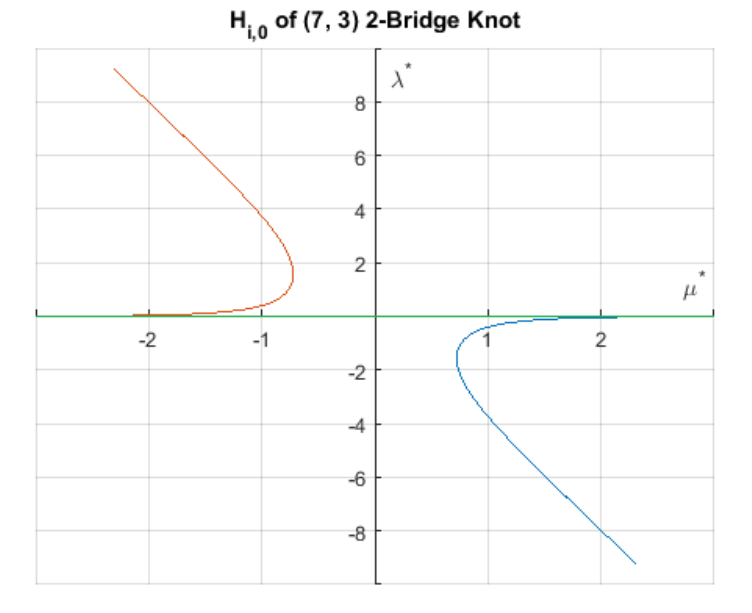}
\includegraphics[width=60mm]{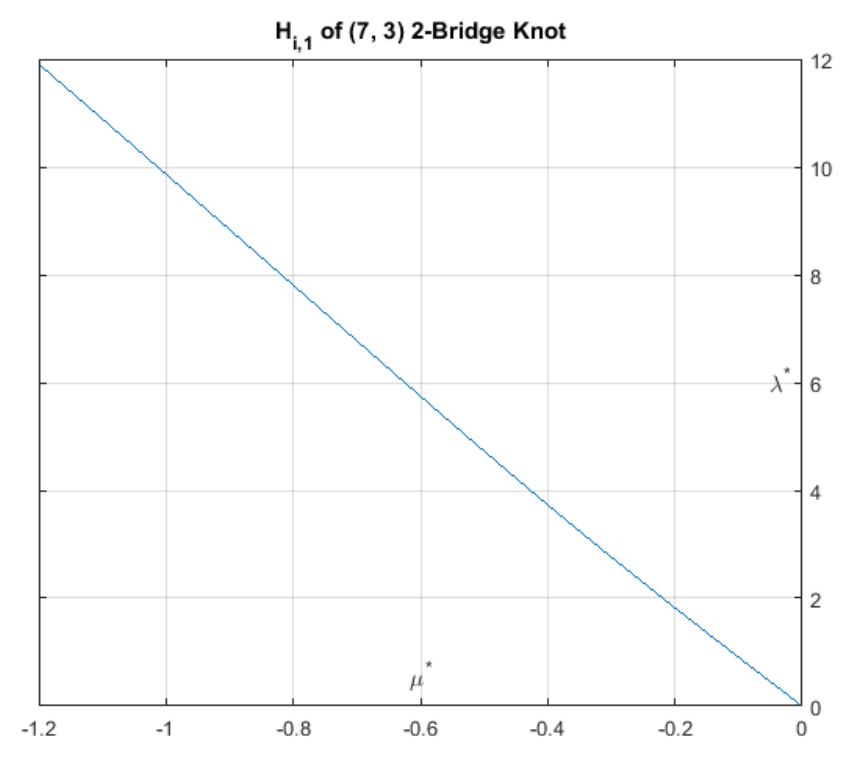}
\includegraphics[width=60mm]{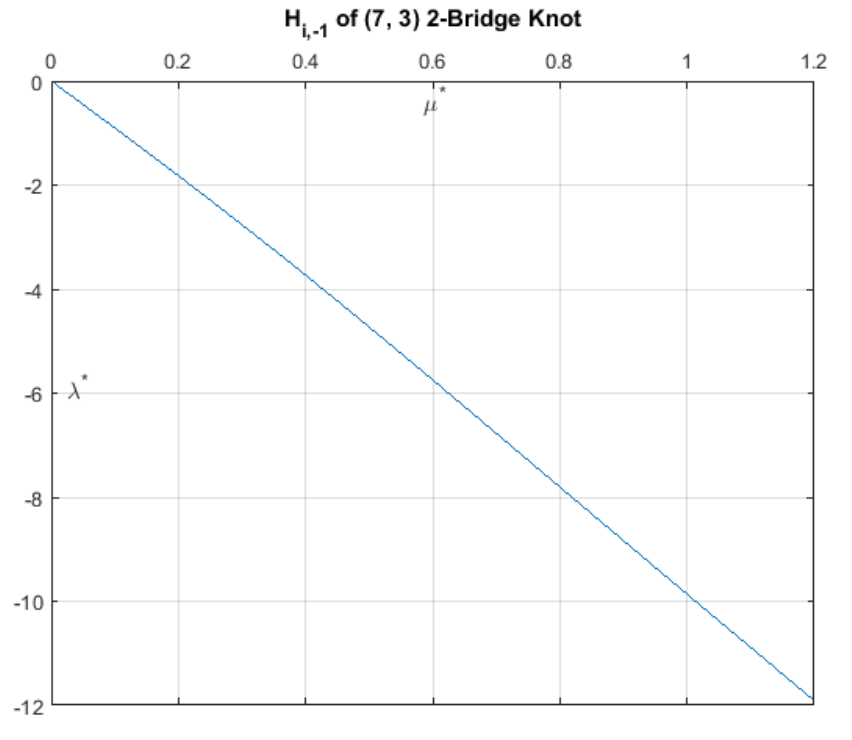}
\caption{Holonomy and Translation Extension Locus of $(7, 3)$ 2-bridge Knot}

\fnote{The top left figure is the translation extension locus of the $(7,3)$ two-bridge knot, where the six small circles are parabolic points. The translation extension locus tells us $(-\infty, 1)$ fillings are orderable, following from result of Culler-Dunfield \cite{CD16}. 

The top right figure is the $H_{0,0}(M)$ component of its holonomy extension locus. There are two asymptotes with slope $-4$ and $0$. The interval of left-orderable Dehn fillings we can read off from the holonomy extension locus is [0,4), again by Lemma \ref{main lemma}.
So compared to translation extension locus, the holonomy extension locus does tell us something more.

The two figures on the bottom are $H_{0,1}$ and $H_{0,-1}$. Notice that asymptotes in $H_{0,\pm 1}$ both have slope $-10$.
Actually, boundary slopes associated to ideal points of the character variety of the $(7, 3)$ two-bridge knot complement are $0$, $4$, $10$. This result confirms Lemma \ref{asymptote}.}
\end{figure}
The $(7,3)$ two-bridge knot, whose genus is $1$, is a twist knot of three half twist. So its Alexander polynomial is not monic and it follows that it is not fibered \cite{FiberAlexPoly}. Moreover, it cannot be an L-space knot \cite[Corollary 1.3]{fiber_lspace}.
In \cite[Section 9, Question (4)]{CD16}, it is observed that for fibered knots, the bound $2g-1$ for translation number of the longitude is never sharp. However we can see from this example that for non fibered knots, this bound can be sharp.

For the above examples, we actually computed the equations defining the curves in the graphs.
For the rest of this section, we will show some more complicated pictures produced by the program PE \cite{PE} written by Culler and Dunfield under SageMath \cite{sage}. In these examples, instead of showing the entire $HL_{\widetilde{G}}(M)$, we only show the quotient $PL_{\widetilde{G}}(M)$ of $HL_{\widetilde{G}}(M)$ under the action of $D_{\infty}(M)$, where we identify $H_{0,j}$ with $H_{0,-j}$ when $j\neq 0$, and quotient $H_{0,0}$ down by reflection about the origin.

Our first example is $t03632$, which has a loop in its holonomy extension locus.
\begin{figure}[H]
\centering
\includegraphics[width=62mm]{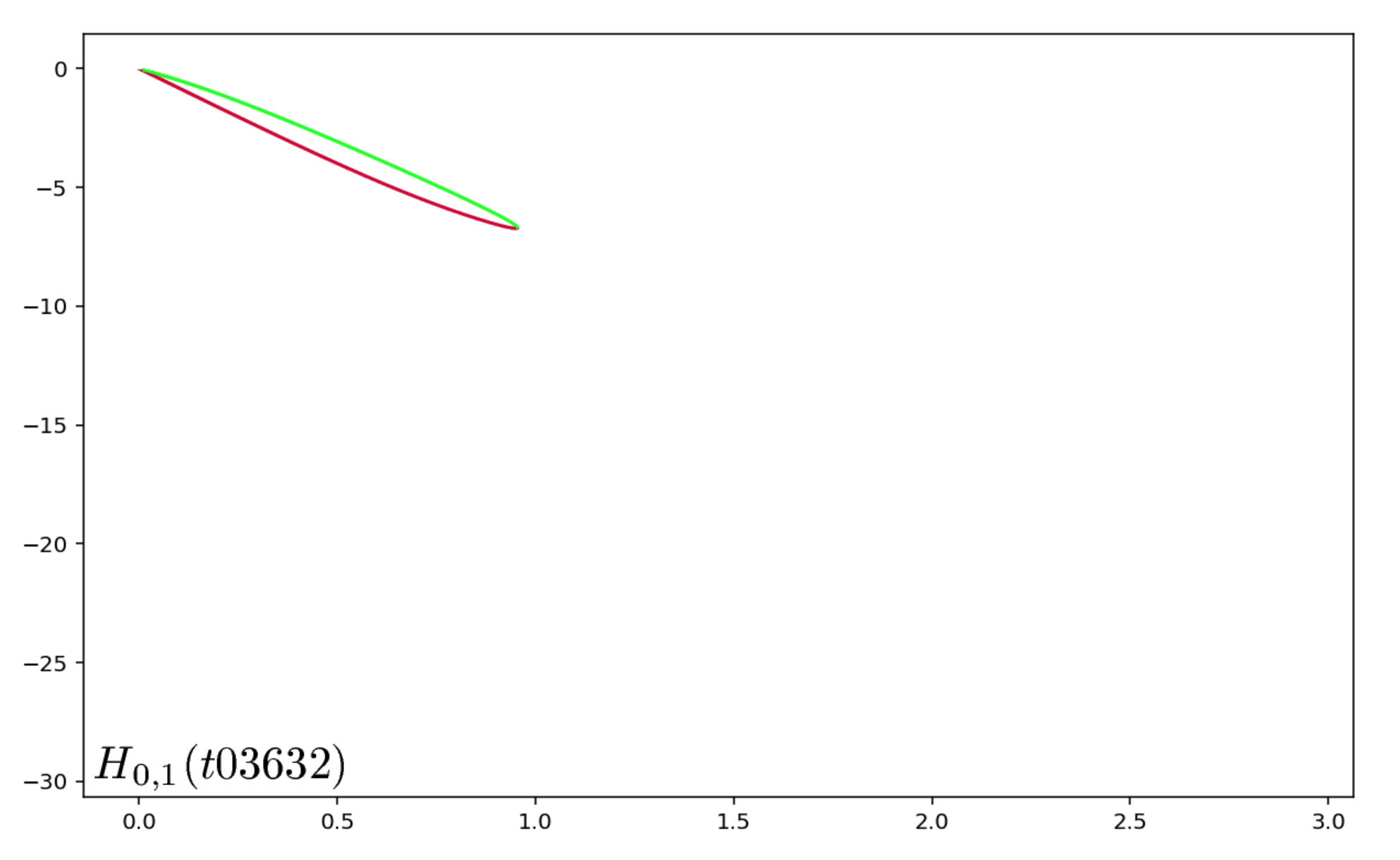}
\includegraphics[width=62mm]{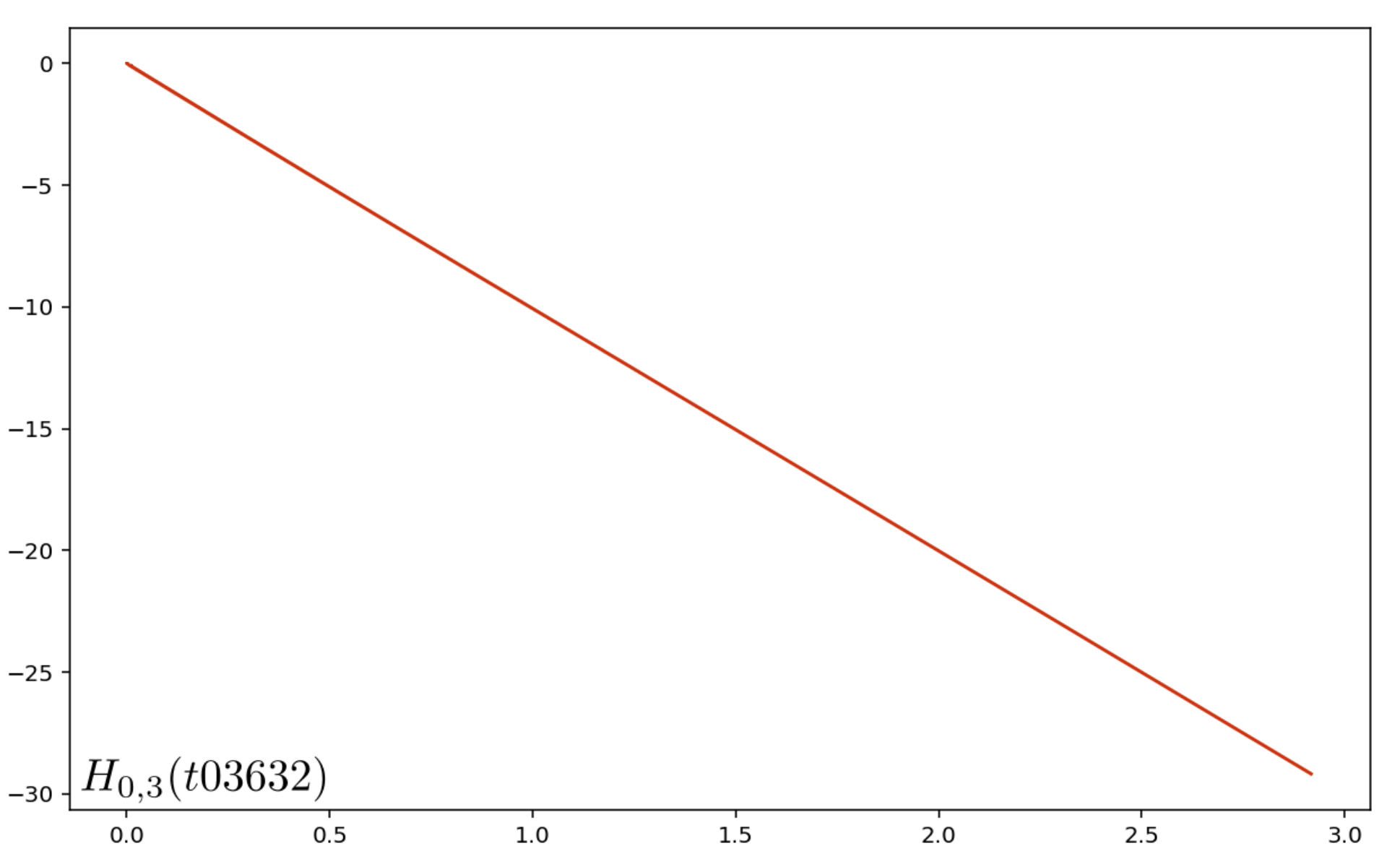}
\includegraphics[width=62mm]{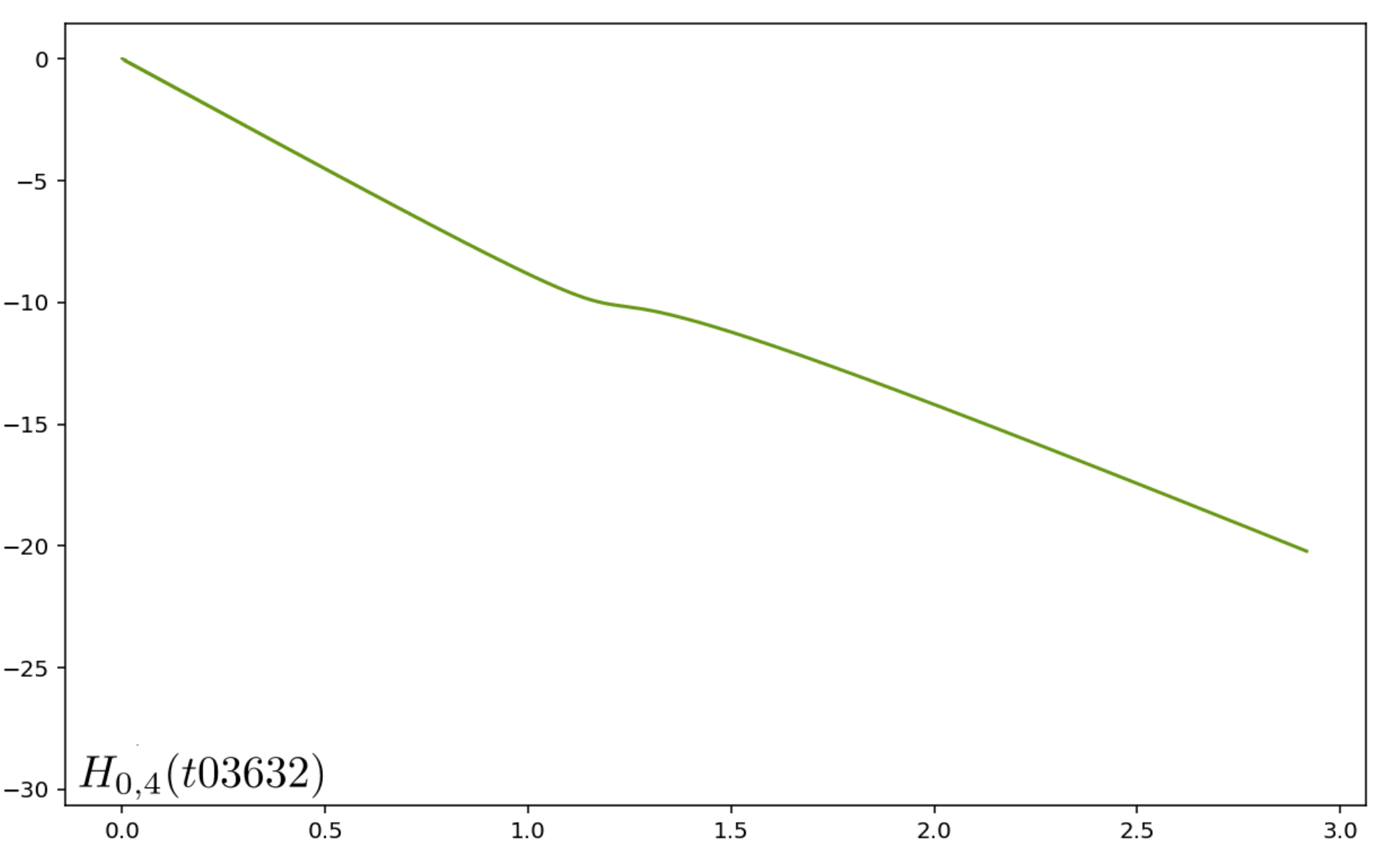}
\includegraphics[width=62mm]{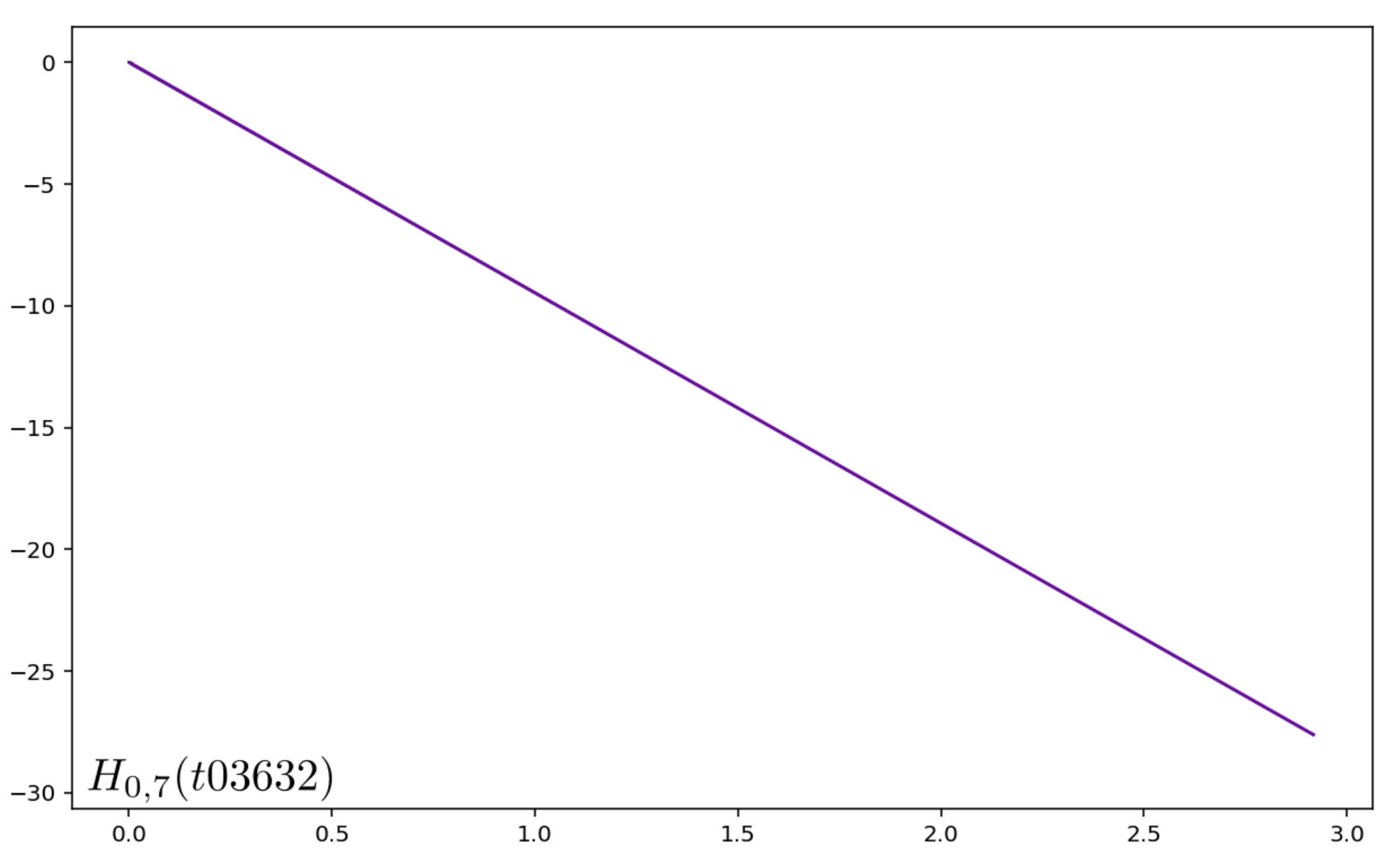}
\caption{$PL_{\widetilde{G}}(t03632)$}
\fnote{Top left figure is $H_{0,1}$ of $t03632$, where we see a small loop based at the origin (parabolic point). The Alexander polynomial of $t03632$ has no positive real root. The locus $H_{0,0}$ contains nothing other than the horizontal line representing abelian representations so we will not show it here. } 
\end{figure}

Our next example is $7_3$ which has a more interesting $H_{0,0}$.
\begin{figure}[H]
\centering
\includegraphics[width=75mm]{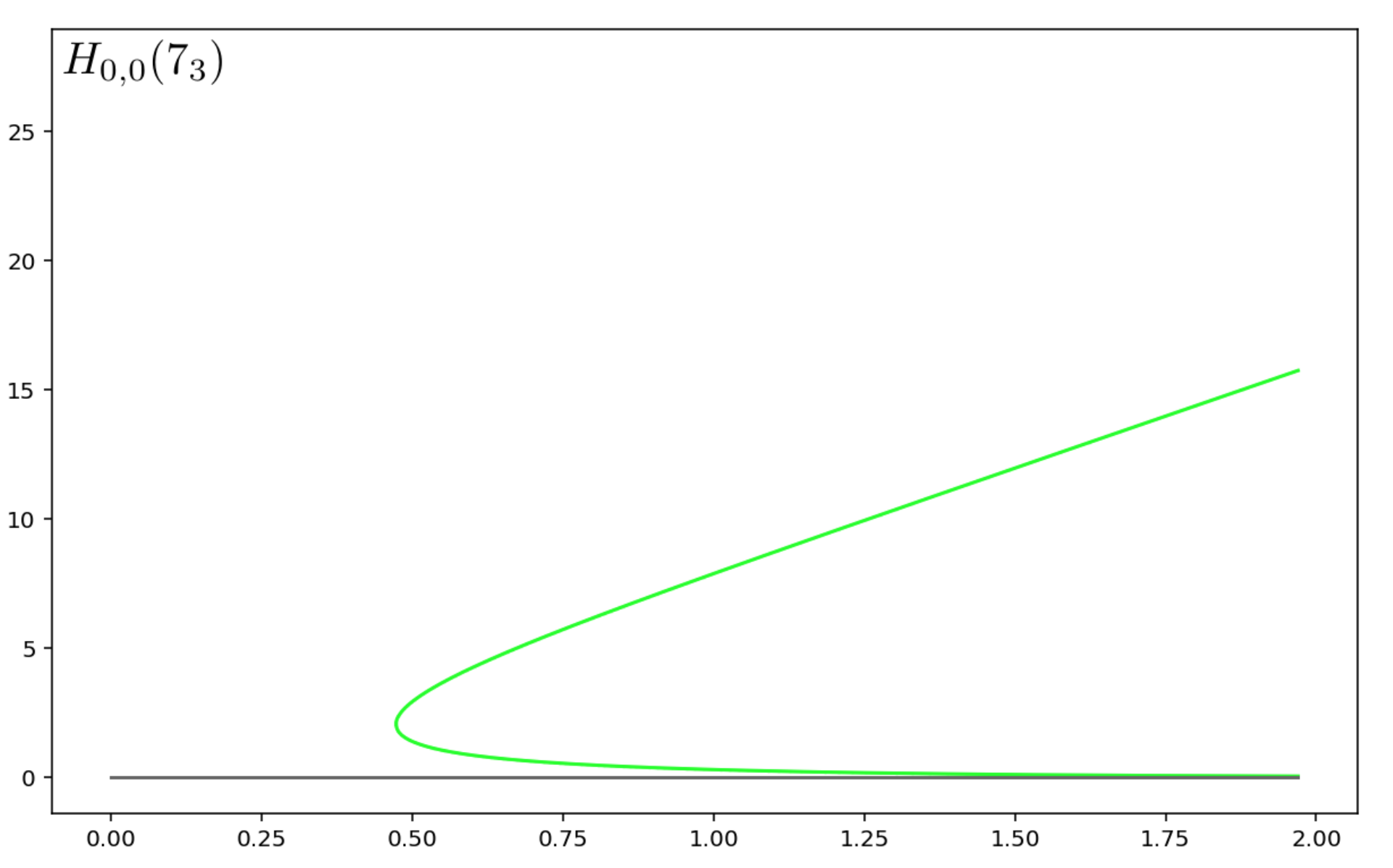}\\
\includegraphics[width=62mm]{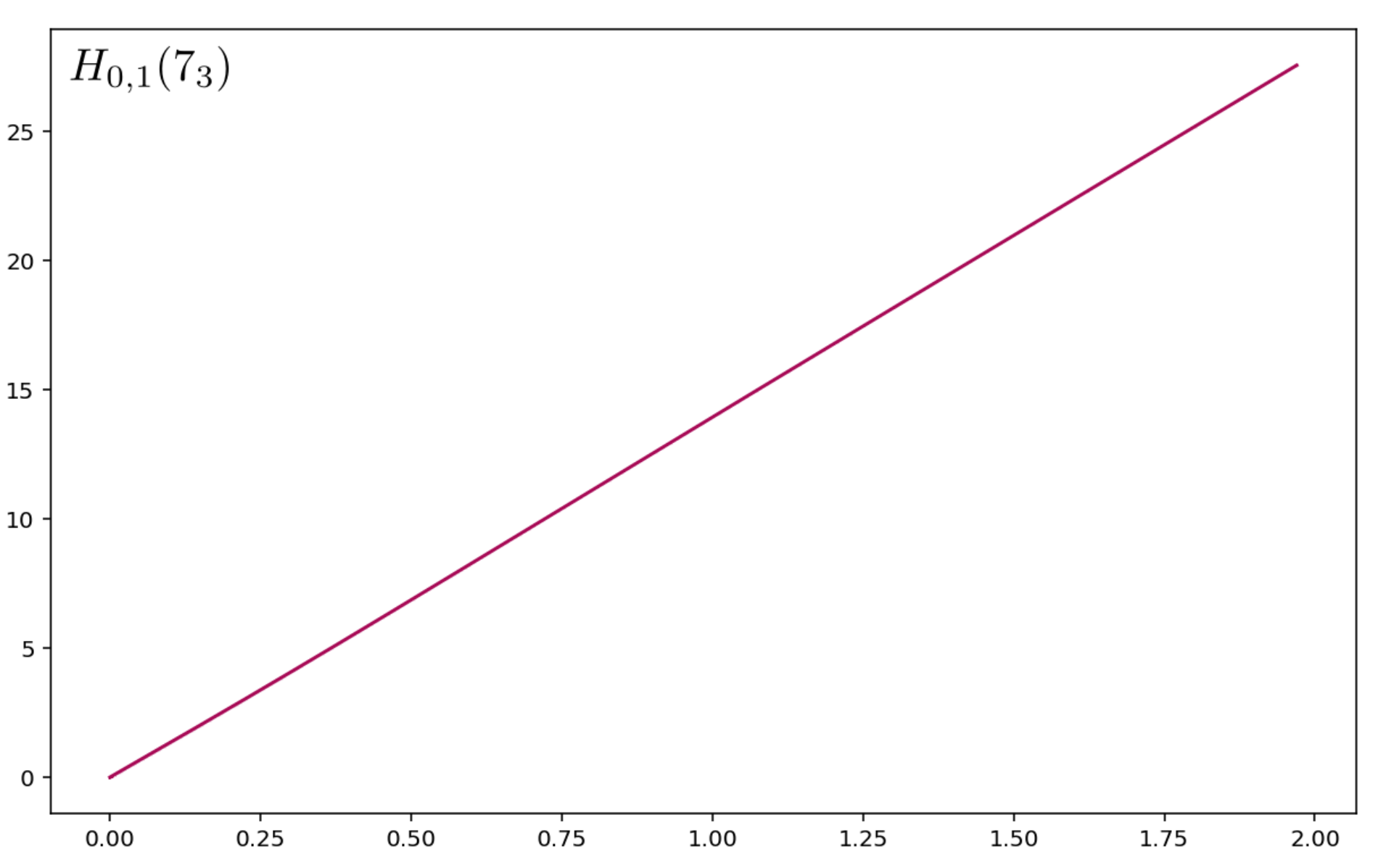}
\includegraphics[width=62mm]{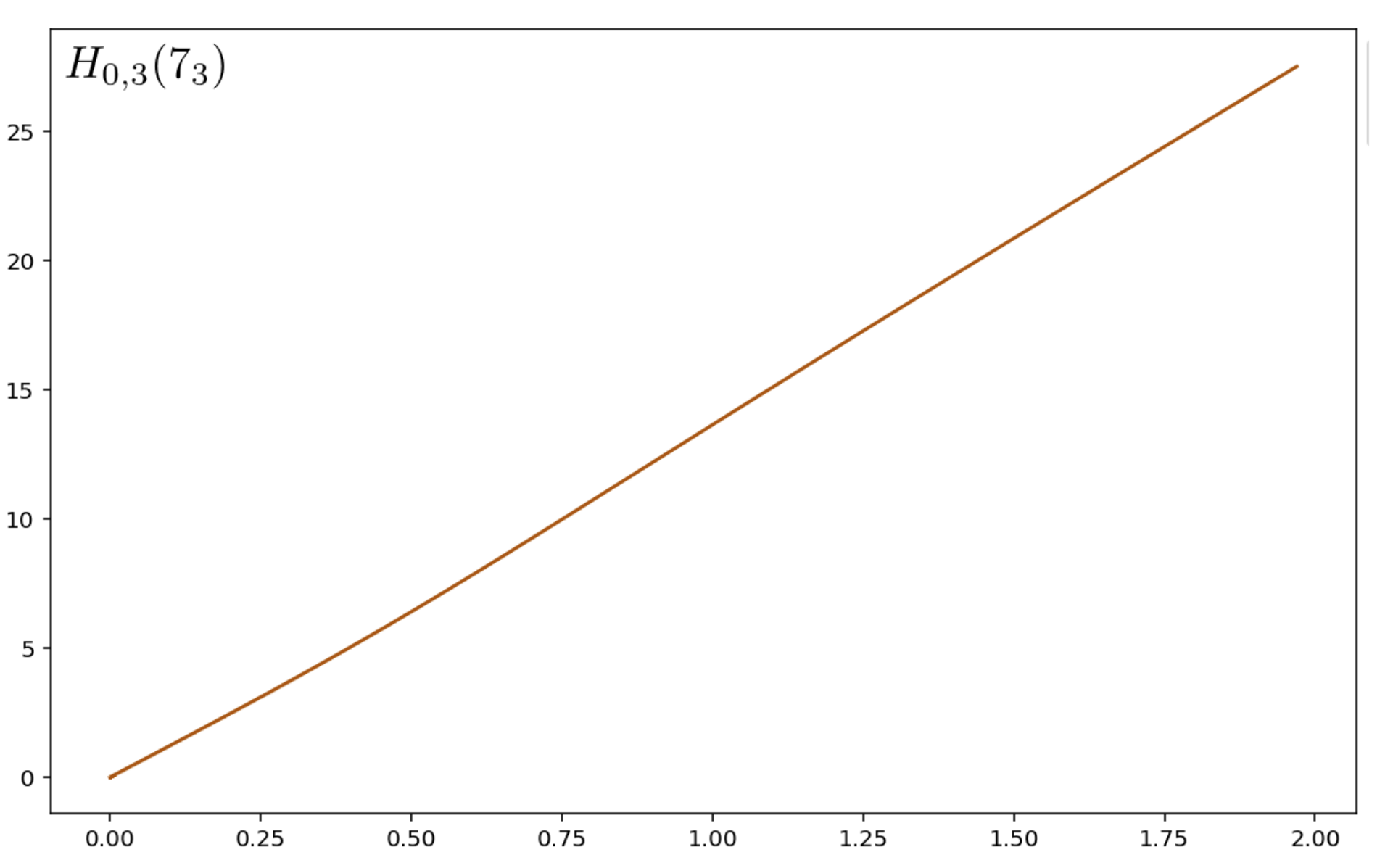}
\caption{$PL_{\widetilde{G}}(7_3)$}
\fnote{The Alexander polynomial of $7_3$ is $2t^4-3t^3+3t^2-3t+2$, which has no real root. But we can see $H_{0,0}(7_3)/(\mathbb{Z}/2\mathbb{Z})$ (figure on top) contains an arc that is different from the $x$-axis, even though this arc does not intersect the $x$-axis. But this arc cannot be predicted by any theorem in this paper. Notice that this arc has two asymptotes of slope $0$ and $6$. So we could predict that Dehn filling of $7_3$ of rational slope in $(-6,0)$ would be orderable.

The snappy command \texttt{normal\_boundary\_slopes()} tells us all the boundary slopes of spun normal surfaces \cite{DG} of $7_3$ are: $0$, $-6$, $-8$, and $-14$. Both arcs in $H_{0,1}(7_3)$ and $H_{0,3}(7_3)$ have asymptote of slope $14$.}
\end{figure}

\subsection{Simple Roots of the Alexander Polynomial}
When the Alexander polynomial $\Delta_M$ of $M$ has a positive root $\xi$, we can draw a point $(\ln(\xi)/2, 0)$ on the $x$-axis and call it an Alexander point. When $\xi$ is a simple root, Lemma \ref{deformation} predicts that there is an arc coming out of the Alexander point $(\ln(\xi)/2, 0)$. Moreover, this Alexander point corresponds to the abelian representation associated to the root $\xi$ of $\Delta_M$, e.g. $\rho_{\alpha}$ as constructed in proof of Lemma \ref{deformation}.
We use large dots to indicate Alexander points in our figures.

In addition to the example of the figure eight knot shown in Figure \ref{figure8}, we will show more holonomy extension loci with Alexander points.
\begin{figure}[H]
\centering
\includegraphics[width=100mm]{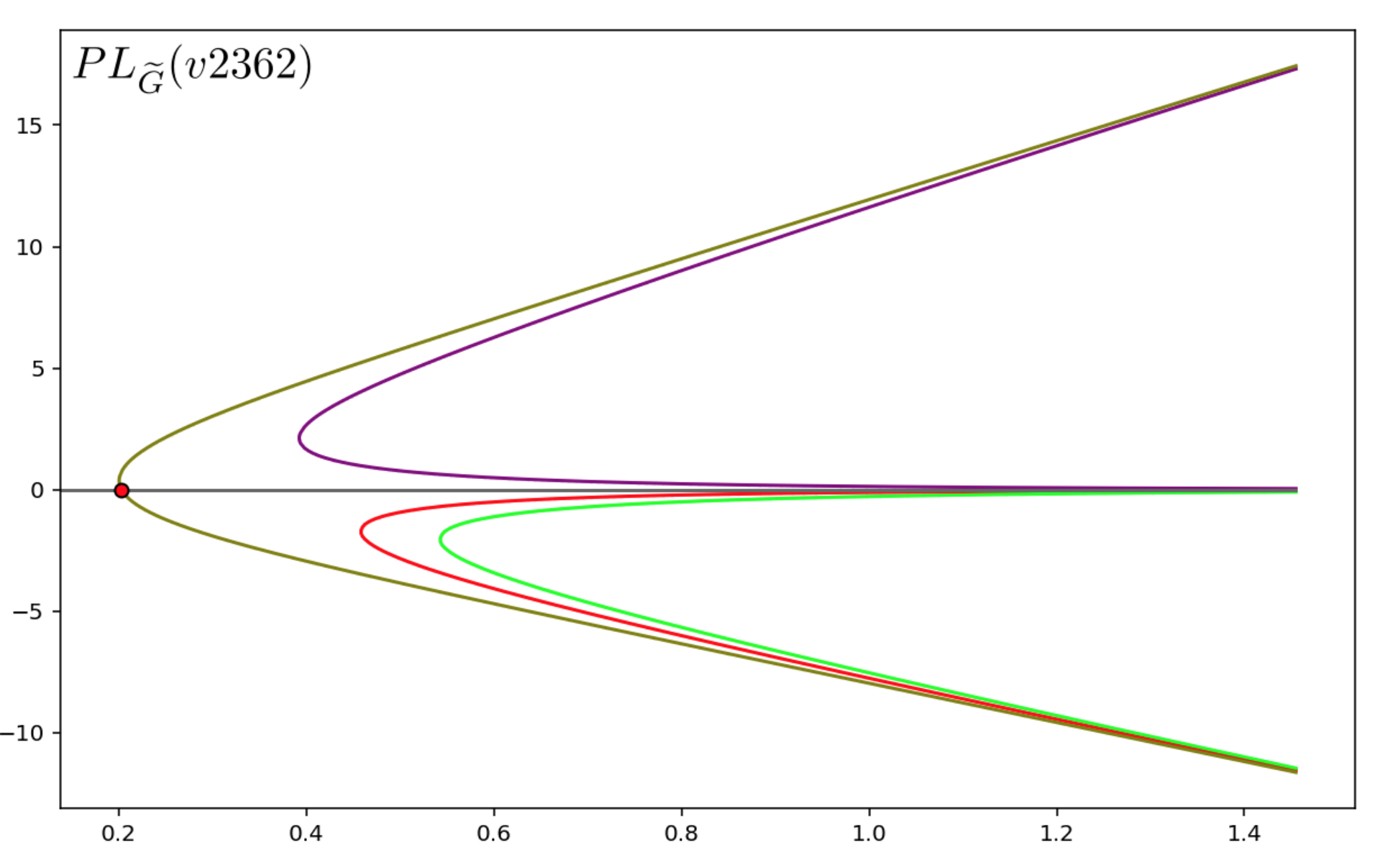}
\caption{$PL_{\widetilde{G}}(v2362)$}
\fnote{This figure is $PL_{\widetilde{G}}(v2362)$, the quotient of the holonomy extension locus of $v2362$, which has only one sheet $H_{0,0}$. The Alexander polynomial of $v2362$ is $6t^2-13t+6$ which has two simple real roots $2/3$ and $3/2$. So we can expect to see the Alexander point $(\frac{1}{2}\ln(\frac{3}{2}),0)$, shown as a red dot in the figure. (The other point $(\frac{1}{2}\ln(\frac{2}{3})=-\frac{1}{2}\ln(\frac{3}{2}),0)$ is mapped to the same point under the quotient action of $\mathbb{Z}/2\mathbb{Z}$.)  We can see in this figure that the arc going through the Alexander point is not tangent to the $x$-axis at the Alexander point. }
\end{figure}

\begin{figure}[H]
\centering
\includegraphics[width=100mm]{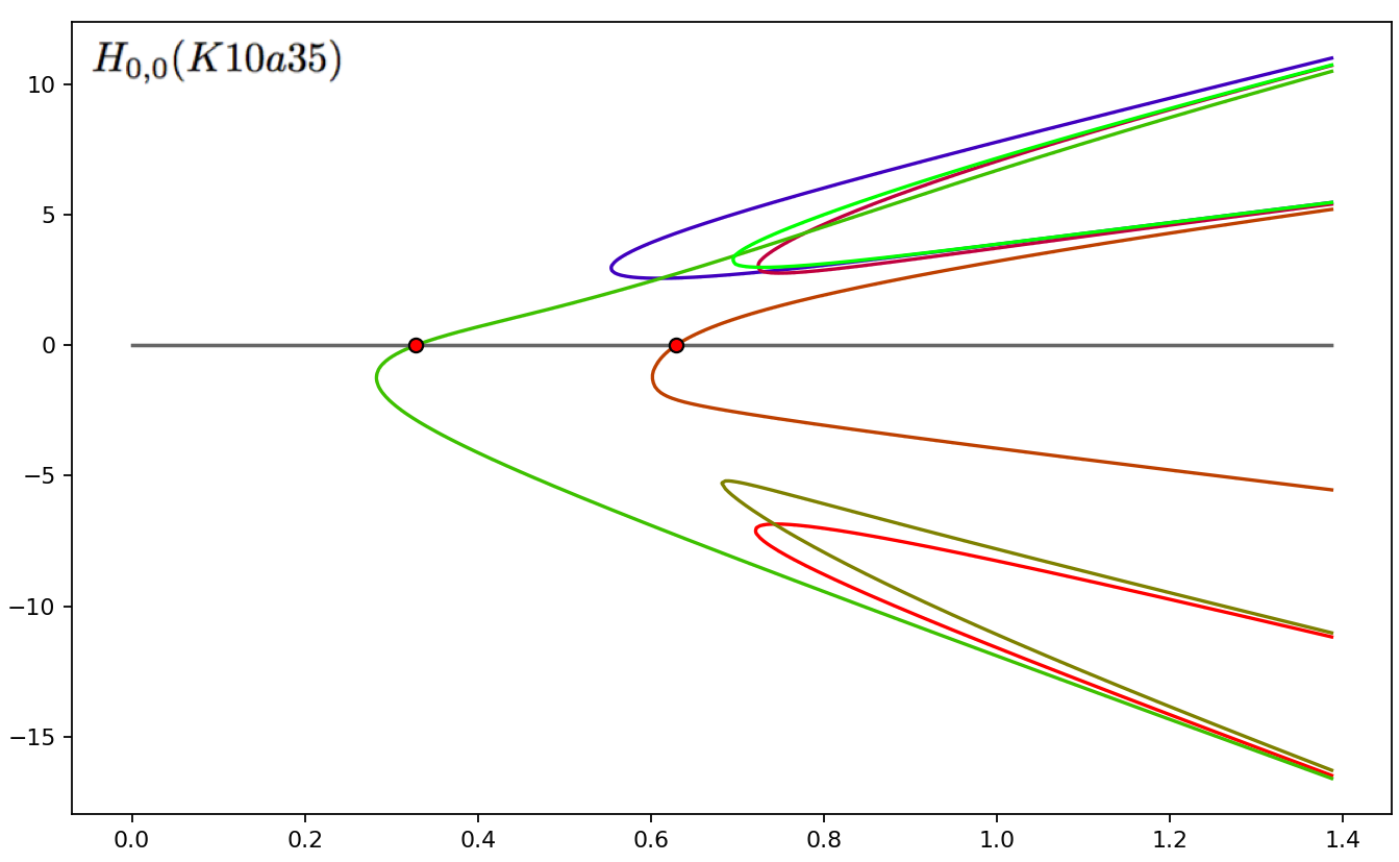}
\caption{$H_{0,0}(K10a35)$}
\fnote{This figure is $H_{0,0}(K10a35)/(\mathbb{Z}/2\mathbb{Z})$. The Alexander polynomial of $K10a35$ has two pairs of simple positive real roots. (The two numbers in each pair are reciprocal so they correspond to the same Alexander point.) So we can expect to see two Alexander points on the x-axis.}
\end{figure}

\begin{figure}[H]
\centering
\includegraphics[width=62mm]{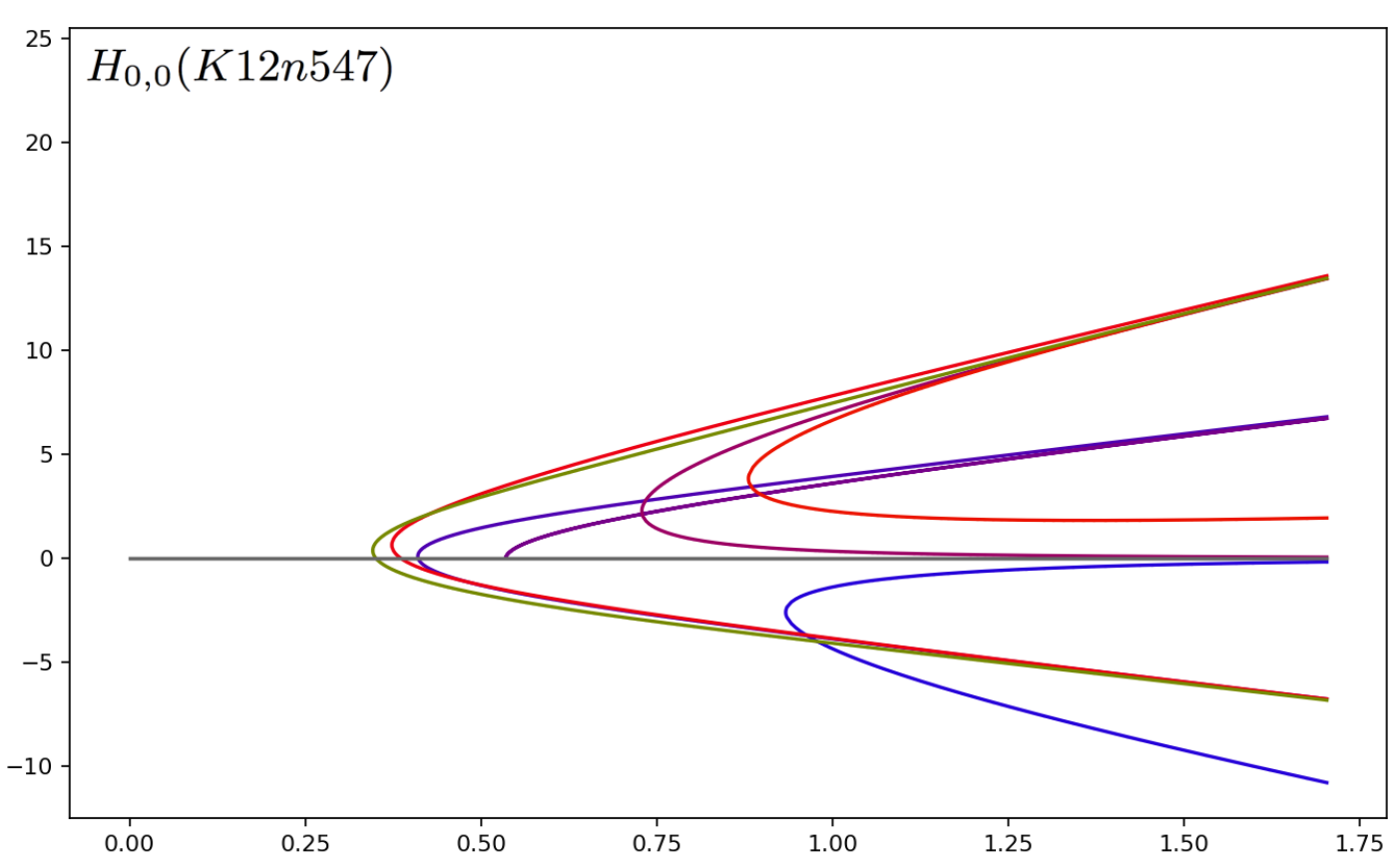}
\includegraphics[width=62mm]{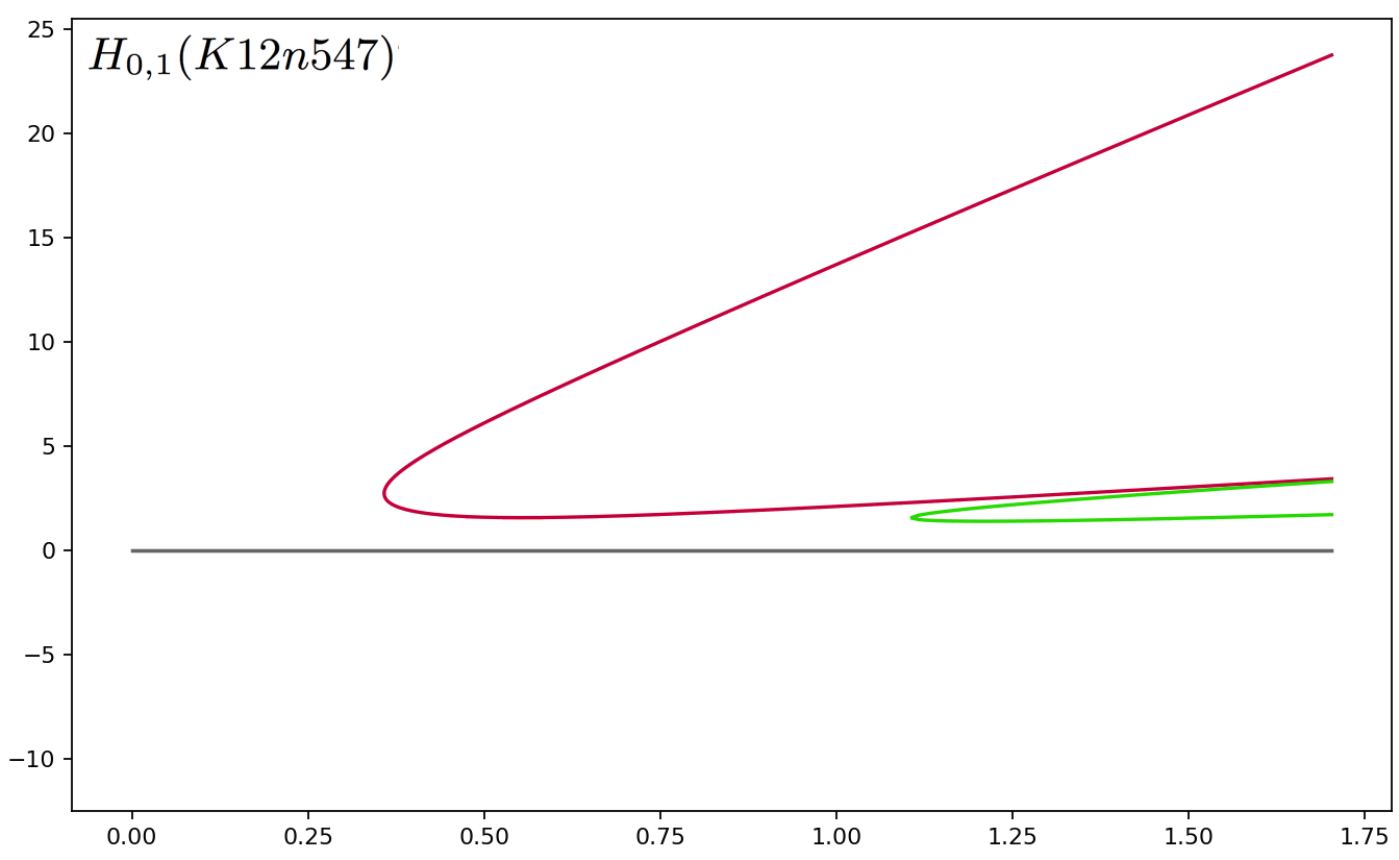}
\caption{$PL_{\widetilde{G}}(K12n547)$}
\fnote{The figure on the left is $H_{0,0}(K12n547)/(\mathbb{Z}/2\mathbb{Z})$. The Alexander polynomial of $K12n547$ has no real root at all. But we can still see quite a few arcs in $H_{0,0}(K12n547)$, many of which even intersect the x-axis, which is a very interesting phenomenon worthy of further exploration. The figure on the right is $H_{0,1}(K12n547)$, where we can see two non intersecting arcs sharing the same asymptote of slope $2$. (The x-axis in the right figure is not contained in $H_{0,1}(K12n547)$. It is included only to show no arc in $H_{0,1}(K12n547)$ intersects the x-axis.)}
\end{figure}

\subsection{Multiple Roots of the Alexander Polynomial}

\begin{figure}[H]
\centering
\includegraphics[width=100mm]{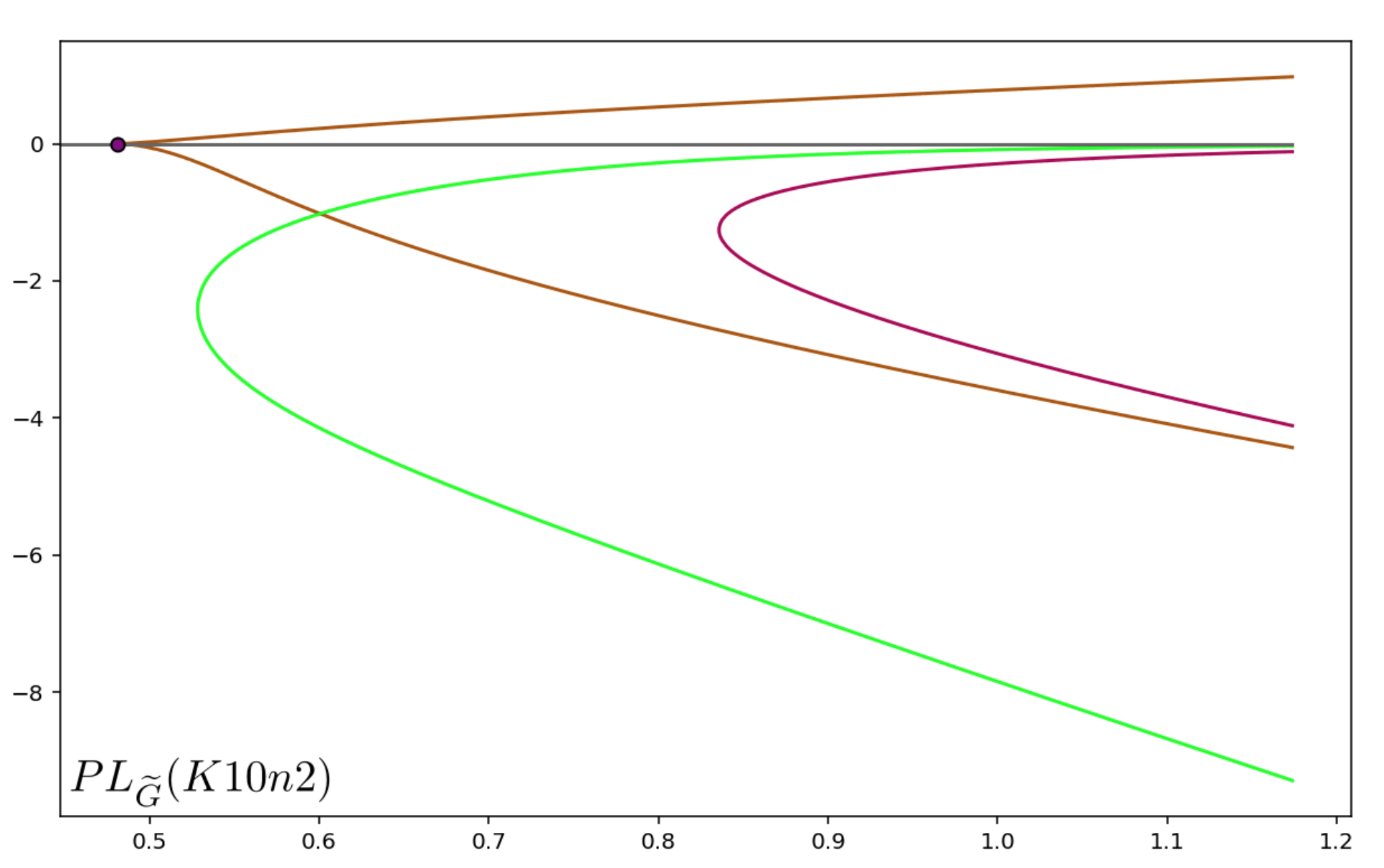}
\caption{$PL_{\widetilde{G}}(K10n2)$}
\fnote{This figure is $PL_{\widetilde{G}}(K10n2)$, the quotient of the holonomy extension locus of $K10n2$. It contains only one sheet, the quotient locus $H_{0,0}/(\mathbb{Z}/2\mathbb{Z})$. The Alexander polynomial of $K10n2$ has a pair of positive real double roots so there is an Alexander point. We can see that the two arcs are tangent to the $x$-axis at the Alexander point. }
\label{d1}
\end{figure}

\begin{figure}[H]
\centering
\includegraphics[width=130mm]{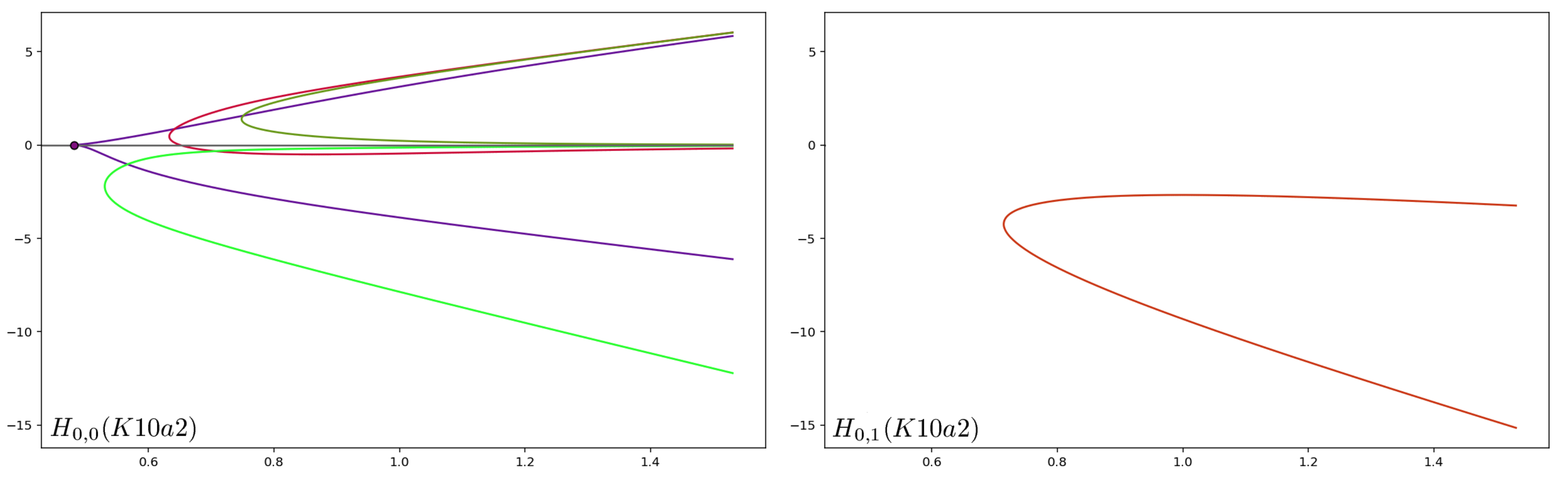}
\caption{$PL_{\widetilde{G}}(K10a2)$}
\fnote{The Alexander polynomial of $K10a2$ has a pair of positive real double roots. We can see that two arcs in $H_{0,0}(K10a2)$ in the left figure are tangent to the $x$-axis at the Alexander point.}
\label{d2}
\end{figure}

The holonomy extension loci of $K10n2$ (Figure \ref{d1}) and $K10a2$ (Figure \ref{d2}) show typical patterns of $\mathbb{Q}$-homology solid tori whose Alexander polynomials have double real roots: they all have arcs tangent to the $x$-axis at the corresponding Alexander point.

The manifold $K9a37$ in our next example also has Alexander polynomial with double roots. However the local picture of its holonomy extension locus at the Alexander point is quite different from what we see in Figure \ref{d1} and \ref{d2}.  
See Figure \ref{K9a37} for explanations. 

The holonomy extension locus of $K9a37$  has some interesting phenomena, which are shown in Figure \ref{K9a37}. Part of the red curve (second curve from the bottom) marked with `x's  means that the point on the marked part of the curve comes from a PSL$_2\mathbb{C}$ representation $\rho$ that is not PSL$_2\mathbb{R}$ even though $\rho|_{\partial M}$ is a PSL$_2\mathbb{R}$ representation. So these points do not belong to the holonomy extension locus. (The small dots on the curves simply means this point comes from a PSL$_2\mathbb{R}$ representation.) From this example, we can see that an arc in a holonomy extension locus can end at a point that is not the infinity, Alexander point or parabolic point. We guess such a point could be a Tillmann point (see \cite{CD16} end of Section 5 for definition).

\begin{figure}[H]
\centering
\includegraphics[width=100mm]{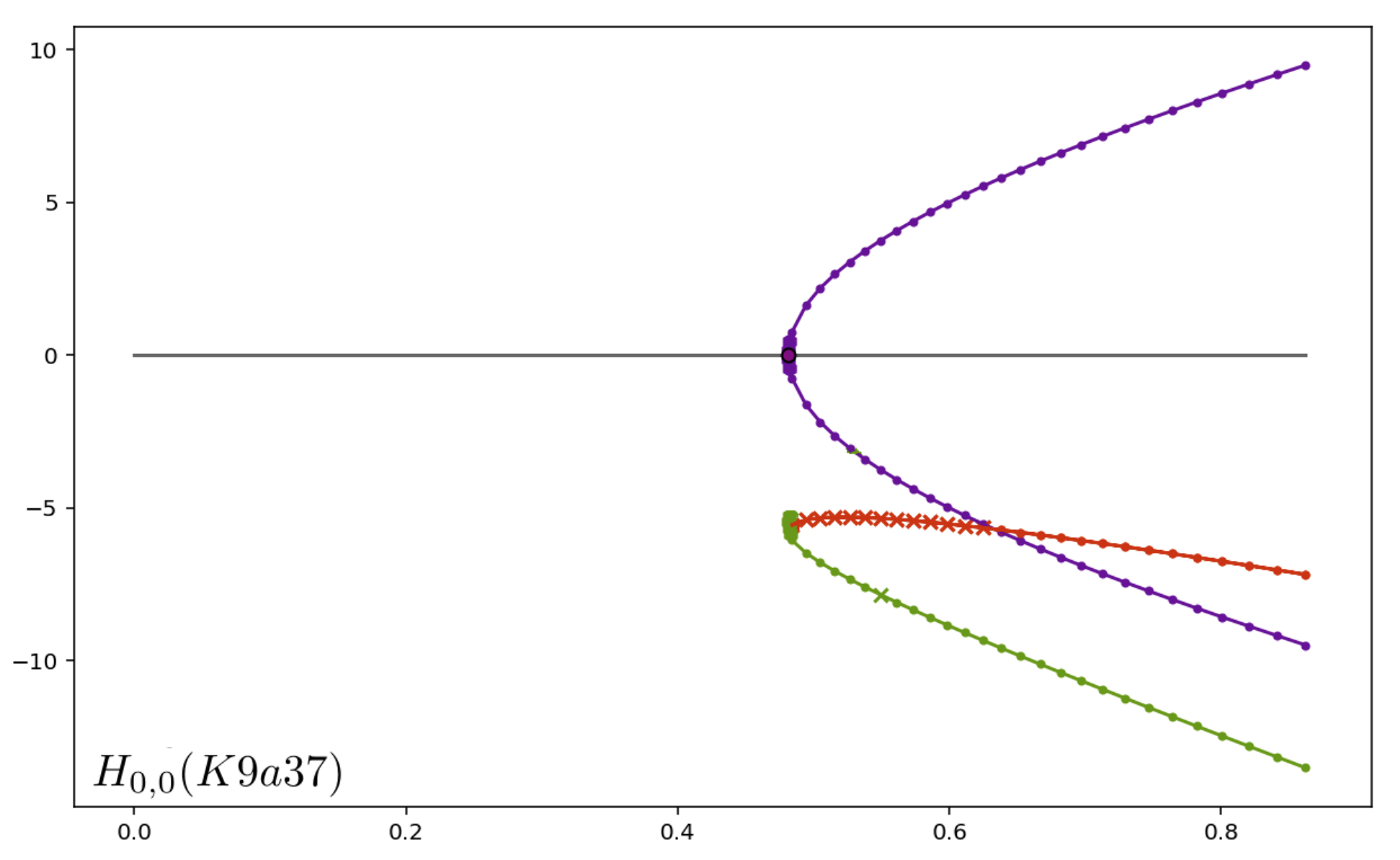}
\caption{$H_{0,0}(K9a37)$}
\label{K9a37}
\fnote{The Alexander polynomial of $K9a37$ has two positive real double roots. Figure \ref{K9a37} is $H_{0,0}$ of the holonomy extension locus of $K9a37$. (To be precise, we still need to remove a small segment of arc on the red curve (second curve from the bottom) to get the actual $H_{0,0}(K9a37)$. ) We can see that there is an arc coming out of the Alexander point in both directions but not tangent to the $x$-axis.\\
\textbf{Remark:} If you run the graphing program directly, the graph you see is slightly different from this figure. In fact, there is an extra arc which does not belong to $H_{0,0}(K9a37)$ but appears tangent to the bottom arc (shown in green) in $H_{0,0}(K9a37)$ and our current graphing program is unable to separate it out automatically. So we have to remove the extra arc by hand.\\
\textbf{Remark 2:} In addition to issues with graphing like unseparated curves and Tillmann points as mentioned above, we also spotted missing components. In the above example $K9a37$, we know an arc in $H_{0,2}(K9a37)$ is missing from our figure. In their graphing program, Culler and Dunfield use gluing varieties rather than character varieties to simplify computation. Some of the graphing issues might be caused by this. Check the end of Section 5 of \cite{CD16} for more details about computation and graphing issues.
}

\end{figure}

The statement of Lemma \ref{deformation} requires the root of the Alexander polynomial to be simple.
When we have a root that is not simple, we expect to see an example where there is no arc coming out of the corresponding Alexander point at all, as this is what happened in the translation extension locus in Figure 10 of Section 5 of \cite{CD16}.
However, we were not able to find such an example at this moment as the graphing program is still unfinished and we only have very limited number of samples.

Even though the graph of the holonomy extension locus cannot function as a precise proof, as it comes from numerical computation. It is still very enlightening in the sense that, if $L_r$ does not intersect the graph of $H_{0,0}(M)$ for any rational $r$ inside some interval, then either the Dehn filling $M(r)$ is not orderable or its orderability could not be proven using the method of representation into $\widetilde{G}$ and a larger subgroup of Homeo$^+(\mathbb{R})$ needs to be taken into consideration.

\section{Alexander polynomials and orderability}\label{5}
In this section, we prove Theorem \ref{Thm_Alex}. To state the theorem, we will need some definitions from \cite{CD16}.
We say a compact 3-manifold $Y$ has few characters if each positive dimensional component of the PSL$_2\mathbb{C}$ character variety of $Y$ consists entirely of characters of reducible representations. An irreducible $\mathbb{Q}$-homology solid torus $M$ is called longitudinally rigid when its Dehn filling $M(0)$ along the homological longitude has few characters.

The author learnt from a private conversation that this theorem was also proved independently by Steven Boyer.

\begin{theorem}\label{Thm_Alex}
Suppose $M$ is the exterior of a knot in a $\mathbb{Q}$-homology $3$-sphere that is longitudinal rigid. If the Alexander polynomial $\Delta_M$ of $M$ has a simple positive real root $\xi\neq 1$, then there exists a nonempty interval $(-a, 0]$ or $[0, a)$ such that for every rational $r$ in the interval, the Dehn filling $M(r)$ is orderable.
\end{theorem}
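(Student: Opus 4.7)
The plan is to apply Lemma \ref{main lemma} at every rational slope in a one-sided interval adjacent to $0$. The Alexander point $p_\xi := (\tfrac{1}{2}\ln\xi, 0) \in H^1(\partial M;\mathbb{R})$ arises as $\mathrm{EV}\circ\iota^*$ of a lift of the diagonal abelian representation $\rho_\xi$ sending $\mu \mapsto \mathrm{diag}(\xi^{\pm 1/2})$ and $\lambda \mapsto I$ (the latter because $\lambda$ is torsion in $H_1(M;\mathbb{Z})$, and the only finite-order diagonal element of $G$ is trivial). Since $\xi > 0$ and $\xi \neq 1$, the lift $\widetilde{\rho}_\xi$ is boundary-hyperbolic, so $p_\xi \in H_{0,0}(M)$. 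By the deformation lemma for simple roots of the Alexander polynomial (Lemma \ref{deformation}, proved later in this section), there is an analytic arc $\widetilde{A} \subset H_{0,0}(M)$ with $p_\xi$ as an endpoint, coming from a one-parameter family of boundary-hyperbolic nonabelian deformations of $\widetilde{\rho}_\xi$.

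The technical heart of the proof is showing that $\widetilde{A}$ leaves $p_\xi$ with nonzero $\lambda^*$-component of its tangent. Here longitudinal rigidity is essential: the arc $\widetilde{A}$ corresponds to an arc of nonabelian characters in $X_G(M)$, and if its tangent at $p_\xi$ lay in $\ker \lambda^*$ then its image in $X_G(M(0))$ would produce a positive-dimensional family of nonabelian characters on $M(0)$, contradicting the few-characters hypothesis. Consequently the projection of $\widetilde{A}$ onto the $\lambda^*$-axis is locally a homeomorphism near $p_\xi$, so $\widetilde{A}$ meets every line $L_r$ for rational $r$ in an interval $[0,a)$ or $(-a,0]$ (the side determined by the sign of the tangent) at a single point close to $p_\xi$.

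To conclude I would check the hypotheses of Lemma \ref{main lemma}. The intersection points are nonparabolic for small $a$ (the only parabolic point of $H_{0,0}(M)$ is the origin by Theorem \ref{graph}, and $p_\xi \neq 0$) and nonideal after possibly shrinking $a$ further. For irreducibility, only finitely many slopes on $M$ yield reducible fillings, so shrinking $a$ again excludes those; the endpoint $r = 0$ is covered by the Alexander point itself giving a nontrivial $\widetilde{G}$-representation factoring through $\pi_1(M(0))$ (together with longitudinal rigidity ruling out a pathological decomposition of $M(0)$). Lemma \ref{main lemma} then yields left-orderability of $\pi_1(M(r))$ for every rational $r$ in the interval. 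The main obstacle is the second step: rigorously translating longitudinal rigidity into the transversality statement about $\widetilde{A}$ at $p_\xi$, which mirrors the elliptic argument of Culler--Dunfield (Theorem \ref{CDThm1}) but must be carried out with eigenvalue-logarithm coordinates rather than rotation angles, placing the required deformation-theoretic computation in Lemma \ref{deformation} at the center of the proof.
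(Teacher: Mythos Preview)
Your overall architecture matches the paper's proof: use Lemma~\ref{deformation} to produce an arc through the Alexander point in $H_{0,0}(M)$, show it is not confined to the axis $L_0$, and then invoke Lemma~\ref{main lemma}. The treatment of ideal/parabolic points and of irreducibility of fillings is also in line with the paper.

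There is, however, a genuine gap in what you identify as the ``technical heart.'' You claim that if the tangent to $\widetilde{A}$ at $p_\xi$ lay in $\ker\lambda^*$, then the arc would produce a positive-dimensional family of nonabelian characters on $M(0)$. This inference is not valid: having tangent contained in $\ker\lambda^*$ only says the arc is \emph{tangent} to $L_0$ at $p_\xi$, not that it stays on $L_0$. Points $\rho_t$ for $t\neq 0$ could very well have $\rho_t(\lambda)\neq I$ (think of an arc of the form $y=(x-x_0)^2$), in which case they do not factor through $\pi_1(M(0))$ and no contradiction with longitudinal rigidity arises. Indeed, the examples in Section~\ref{examples} (Figures~\ref{d1} and~\ref{d2}, for double roots) show that tangency to $L_0$ at an Alexander point does occur, so the transversality you seek is not a formal consequence of the setup.

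The paper sidesteps this by proving only the weaker statement that $A$ is \emph{not contained in} $L_0$, and this is all that is needed. If $A\subset L_0$, then for every $t$ the element $\rho_t(\lambda)$ is hyperbolic or trivial with eigenvalue-log zero, hence $\rho_t(\lambda)=I$; each $\rho_t$ then factors through $\pi_1(M(0))$, and since the $\rho_t$ are irreducible for $t\neq 0$ one gets a positive-dimensional component of $X(M(0))$ containing irreducible characters---contradicting longitudinal rigidity. Once $A\not\subset L_0$, any point of $A$ off $L_0$ (together with continuity) already forces $L_r\cap A\neq\emptyset$ for all $r$ on one side of $0$ in a small interval; no control on the tangent direction is required. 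For the endpoint $r=0$, the paper uses Lemma~\ref{irred} to deduce $M(0)$ is irreducible and then \cite[Theorem~1.1]{BRW} (via $b_1(M(0))=1$), which is what your parenthetical remark about ``ruling out a pathological decomposition'' should be made into.
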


The following lemma is the key to the proof of Theorem \ref{Thm_Alex}. In Theorem \ref{Thm_Alex}, we need $M$ to be longitudinal rigidity to ensure that the path constructed in Lemma \ref{deformation} is not contained in the x-axis when mapped to the holonomy extension locus.

\begin{lemma}\label{deformation}
Suppose $M$ is an irreducible $\mathbb{Q}$-homology solid torus. If $\xi\neq 1$ is a simple positive real root of the Alexander polynomial, then there exists an analytic path $\rho_t: [-1,1]\rightarrow R_G(M)$ where:
\begin{itemize}
\item[(a)] The representations $\rho_t$ are irreducible over PSL$_2\mathbb{C}$ for $t\neq 0$.
\item[(b)] The corresponding path $[\rho_t]$ of characters in $X_G(M)$ is also a nonconstant analytic path.
\item[(c)] $\text{tr}^2_{\gamma}(\rho_t)$ is nonconstant in $t$ for some $\gamma\in \pi_1(\partial M)$.
\end{itemize}
\end{lemma}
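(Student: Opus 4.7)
The plan is to construct $\rho_t$ by deforming the canonical abelian representation associated to $\xi$ using the standard cohomological theory of reducible deformations at simple Alexander roots. Let $\varphi\colon \pi_1(M)\to H_1(M;\mathbb{Z})/\mathrm{tors}\cong\mathbb{Z}$ send the meridian to $1$, and define $\rho_\xi\in R_G(M)$ by
\[
\rho_\xi(\gamma)=\begin{pmatrix}\xi^{\varphi(\gamma)/2}&0\\0&\xi^{-\varphi(\gamma)/2}\end{pmatrix}.
\]
This is well-defined in $G=\mathrm{PSL}_2\mathbb{R}$ since $\xi>0$, and it sends the homological longitude to the identity (the longitude is torsion in $H_1(M;\mathbb{Z})$, and the diagonal subgroup of $G$ is torsion-free). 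Under $\mathrm{Ad}\,\rho_\xi$ the Lie algebra $\mathfrak{sl}_2\mathbb{C}$ decomposes as $\mathbb{C}_{\xi}\oplus\mathbb{C}\oplus\mathbb{C}_{\xi^{-1}}$, the upper-triangular, diagonal, and lower-triangular weight spaces.

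First-order deformations of $\rho_\xi$ are classified by $H^1(\pi_1(M);\mathrm{Ad}\,\rho_\xi)$, which splits according to the weight decomposition. The classical dictionary between the Alexander module of $M$ and twisted cohomology shows that simpleness of $\xi$ as a root of $\Delta_M$ forces $\dim H^1(\pi_1(M);\mathbb{C}_\xi)=1$, and that the generating class $[u]$ is unobstructed: the cup product $[u]\smile [u]\in H^2(\pi_1(M);\mathrm{Ad}\,\rho_\xi)$ vanishes along with all higher-order obstructions, by the arguments of Heusener--Porti and Ben Abdelghani which are also invoked in the proof of \cite[Theorem~7.1]{CD16}. Applying the analytic implicit function theorem (or Artin's approximation theorem) integrates $[u]$ to an analytic arc $\tilde\rho_t$ in $R_{\mathrm{PSL}_2\mathbb{C}}(M)$ with $\tilde\rho_0=\rho_\xi$ and initial velocity $u$. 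Because $\xi$ is real and positive, complex conjugation on $\mathrm{PSL}_2\mathbb{C}$ fixes $\rho_\xi$ and preserves the natural real structure on $\mathbb{C}_\xi$, so $[u]$ may be taken real and the integration can be carried out inside the real-analytic variety $R_G(M)$. A linear reparametrization then yields the desired analytic path $\rho_t\colon[-1,1]\to R_G(M)$.

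Property (a) is immediate: since $u$ has nonzero component in the off-diagonal weight space $\mathbb{C}_\xi$, the representation $\rho_t$ is irreducible over $\mathrm{PSL}_2\mathbb{C}$ for all sufficiently small nonzero $t$, and shrinking the parameter extends this to $[-1,1]\setminus\{0\}$. Property (b) follows because reducible characters form a proper closed subvariety of $X_G(M)$, so the analytic path $[\rho_t]$ cannot be constant once the generic $\rho_t$ is irreducible. The main obstacle is property (c): one must verify that $u$ restricts to a nontrivial class on $\pi_1(\partial M)$. This is the key place where $\xi\neq 1$ enters. The meridian acts on $\mathbb{C}_\xi$ as multiplication by $\xi\neq 1$, so $H^0(\pi_1(\partial M);\mathbb{C}_\xi)=0$, and an analysis of the long exact sequence of the pair $(M,\partial M)$ combined with Poincar\'e--Lefschetz duality shows that the restriction map $H^1(\pi_1(M);\mathbb{C}_\xi)\to H^1(\pi_1(\partial M);\mathbb{C}_\xi)$ is injective on the line spanned by $[u]$. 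The nontriviality of the restricted cocycle then translates directly into a nonzero first-order change in some boundary trace-squared function $\mathrm{tr}^2_\gamma(\rho_t)$ with $\gamma\in\pi_1(\partial M)$, establishing (c).
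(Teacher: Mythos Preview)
Your overall strategy matches the paper's, but there is a genuine gap in the choice of cocycle you integrate. You take $[u]\in H^1(\pi_1(M);\mathbb{C}_\xi)$, the class coming from a single off-diagonal weight space, and then assert that the integrated path $\rho_t$ is irreducible because $u$ has nonzero component in $\mathbb{C}_\xi$. This is not correct: a cocycle lying entirely in the strictly upper-triangular summand $\mathbb{C}_\xi$ integrates to a path of \emph{upper-triangular} representations, which are reducible over $\mathrm{PSL}_2\mathbb{C}$. Worse, since the trace of an upper-triangular matrix is the sum of its diagonal entries, every trace function is exactly constant along such a path, so the corresponding path of characters $[\rho_t]$ is constant and both (b) and (c) fail as well. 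Your argument for (c)---that the restriction of $[u]$ to $H^1(\pi_1(\partial M);\mathbb{C}_\xi)$ is nonzero---is fine as a cohomological statement, but a nonzero restricted cocycle in $\mathbb{C}_\xi$ alone still produces zero first-order change in every $\mathrm{tr}^2_\gamma$.

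The paper avoids this by integrating the cocycle $d_+ + d_-$, which has nonzero components in \emph{both} off-diagonal summands $\mathbb{C}_\xi$ and $\mathbb{C}_{\xi^{-1}}$; it is this two-sided deformation, taken from Heusener--Porti, that genuinely leaves the reducible locus and moves the character. For part (c) the paper then passes to the nonabelian reducible representation $\rho^+$ with the same character as $\rho_\alpha$ and proves injectivity of $\iota^*\colon H^1(M;\mathfrak{sl}_2(\mathbb{R})_{\rho^+})\to H^1(\partial M;\mathfrak{sl}_2(\mathbb{R})_{\rho^+})$, following \cite[Lemma~7.3(4)]{CD16}. To repair your argument you need to replace $[u]$ by a cocycle with components in both weight spaces, and then redo the boundary analysis with coefficients in the full adjoint module (or at $\rho^+$) rather than in $\mathbb{C}_\xi$ alone.
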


To study the smoothness of a point on the character variety, we need to study the Zariski tangent space at that point.

\begin{definition}\cite[3.1.3]{porti_french}
Suppose $V$ is an affine algebraic variety in $\mathbb{C}^n$. Let $I(V)=\{ f\in \mathbb{C}[x_1,\ldots, x_n]|f(x)=0, \forall x\in V\}$ be the ideal of $V$. Define the Zariski tangent space to $V$ at $p$ to be the vector space of derivatives of polynomials:
\begin{displaymath}
T_p^{\text{Zar}}(V)=\{\frac{d\gamma}{dt}|_{t=0}\in \mathbb{C}^n|\gamma\in(\mathbb{C}[t])^n, \gamma(0)=p\ s.t.\  f\circ\gamma\in t^2\mathbb{C}[t]\  \forall f\in I(V) \}.
\end{displaymath}
\end{definition}
A point $p$ on $V$ is called smooth if the dimension of $T_p^{\text{Zar}}(V)$ is equal to the dimension of the component of $V$ which $p$ lies on.

Let $\Gamma$ be a group and let $\rho: \Gamma\rightarrow \text{PSL}_2\mathbb{C}$ be a representation. Then we can turn the Lie algebra $\mathfrak{sl}_2(\mathbb{C})$ into a $\Gamma$ module via the adjoint representation, which means taking conjugation $g\cdot a:=\rho(g)a\rho(g)^{-1}$. Denote this $\Gamma$ module by $\mathfrak{sl}_2(\mathbb{C})_{\rho}$. Then the Zariski tangent space of the character variety $X_{\text{PSL}_2\mathbb{C}}(\Gamma)$ at $[\rho]$ is a subspace of the cohomology $H^1(\Gamma; \mathfrak{sl}_2(\mathbb{C})_{\rho})$. \cite[Proposition 3.5]{porti_french}

\begin{proof}[Proof of Lemma \ref{Thm_Alex}]
First I prove (a) and (b).

As in Proposition 10.2 of \cite{HP2}, let $\alpha: \pi_1(M)\rightarrow \mathbb{R}_+=(\mathbb{R}>0)$ be a representation such that $\alpha$ factors through $H_1(M; \mathbb{Z})_{\text{free}}\cong \mathbb{Z}$ and takes a generator of $H_1(M; \mathbb{Z})_{\text{free}}$ to $\xi$.
Let $\rho_\alpha: \pi_1(M)\rightarrow \text{PSL}_2\mathbb{R}$ be the associated diagonal representation given by
\begin{displaymath}
\rho_\alpha=\pm \left[\begin{matrix}\alpha^{1/2}(\gamma) & 0\\0 &\alpha^{-1/2}(\gamma)\end{matrix}\right], \text{ where $\alpha^{1/2}(\gamma)$ is the positive square root of } \alpha(\gamma).
\end{displaymath}
Then $\chi_{\alpha}=\text{tr}^2(\rho_\alpha)$ is real valued, as $\alpha(\gamma)+1/\alpha(\gamma)+2\in \mathbb{R}$, $\forall \gamma\in\pi_1(M)$. Since Im$(\alpha)$ is contained in $\mathbb{R}_+$ but not in $\{\pm 1\}$, Im$(\rho_{\alpha})$ is contained in PGL$_2(\mathbb{R})$ and in fact in PSL$_2\mathbb{R}$.
Next, we carry out the computation of obstruction in the real setting. As $\mathfrak{sl}_2(\mathbb{C})$ is the complexification of $\mathfrak{sl}_2(\mathbb{R})$, the Lie algebra of SL$_2\mathbb{R}$, we have the corresponding isomorphism of cohomology groups.
\begin{displaymath}
H^*(\pi_1(M); \mathfrak{sl}_2(\mathbb{C})_{\rho_{\alpha}})=H^*(\pi_1(M); \mathfrak{sl}_2(\mathbb{R})_{\rho_{\alpha}})\otimes_{\mathbb{R}}\mathbb{C}.
\end{displaymath}
So computations with complex variety $X(M)$ in the proof of \cite[Theorem 1.3]{HP2} can be carried out in the real case.
It follows that the tangent space to $X_G(M)$ at $\chi_{\alpha}$ is $H^*(\pi_1(M); \mathbb{R}_+\oplus\mathbb{R}_-)//\mathbb{R}^*\cong \mathbb{R}$ and thus $\chi_{\alpha}$ is a smooth point. Carrying out the computation of obstructions in the real setting, we are able to show that $d_++d_-\in H^1(\pi_1(M); \mathfrak{sl}_2(\mathbb{R})_{\rho_{\alpha}})$ can be integrated to an analytic path $\rho_t: [-1,1]
\rightarrow R_{G}(M)$ with $\rho_0=\rho_{\alpha}$ and $\rho_t$ irreducible over PSL$_2\mathbb{C}$ for $t\neq 0$.
So $\chi_{\alpha}$ is contained in a curve containing characters of irreducible PSL$_2\mathbb{R}$ representations, which gives (a).

The path $[\rho_t]\subset X_G(M)$ is nonconstant because $\rho_t$ is irreducible whenever $t\neq 0$ and thus cannot have same character as the reducible representation $\rho_0$, proving part (b).

Next, we will prove part (c).
In fact the existence of $\gamma\in \pi_1(\partial M)$ such that $\text{tr}^2_{\gamma}(\rho_t)$ is nonconstant in $t$ is proved similarly as in \cite[Lemma 7.3 (4)]{CD16}.
We first construct a nonabelian representation $\rho^+\in R_G(M)$ which corresponds to $[\rho_{\alpha}]$ in $X_G(M)$. Then the Zariski tangent space of $X_G(M)$ at $[\rho_{\alpha}]$ can be identified with $H^1(M; \mathfrak{sl}_2(\mathbb{R})_{\rho^+})$, while the Zariski tangent space of $X_G(\partial M)$ at $[\rho^+\circ \iota]$ can be identified with $H^1(\partial M; \mathfrak{sl}_2(\mathbb{R})_{\rho^+})$. So the proof of (c) boils down to showing the injectivity of $\iota^*: H^1(M; \mathfrak{sl}_2(\mathbb{R})_{\rho^+})\rightarrow H^1(\partial M; \mathfrak{sl}_2(\mathbb{R})_{\rho^+})$. See \cite[Lemma 7.3 (4)]{CD16} for more details.

\end{proof}

We will also need the following property of closed $3$ manifolds with few characters.

\begin{lemma}\label{irred}
Suppose $Y$ is a closed $3$ manifold with $H_1(Y, \mathbb{Q})=\mathbb{Q}$. If $Y$ has few characters, then $Y$ is irreducible.
\end{lemma}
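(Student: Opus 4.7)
The plan is to argue by contrapositive, constructing a positive-dimensional family of irreducible characters in $X(Y)$ when $Y$ is reducible. A reducible closed orientable $3$-manifold is either $S^2 \times S^1$ or decomposes as a non-trivial connect sum $Y = Y_1 \# Y_2$ with neither summand homeomorphic to $S^3$. The case $Y = S^2 \times S^1$ must be excluded by the intended context (since $Y$ will be a longitudinal filling of a non-solid-torus $\mathbb{Q}$-homology solid torus, this case does not arise; note however that $\pi_1(S^2 \times S^1) = \mathbb{Z}$ admits only abelian representations, so $S^2 \times S^1$ formally satisfies both hypotheses of the lemma as stated). Setting this aside, van Kampen gives $\pi_1(Y) = \pi_1(Y_1) * \pi_1(Y_2)$, and since $H_1(Y;\mathbb{Q}) = H_1(Y_1;\mathbb{Q}) \oplus H_1(Y_2;\mathbb{Q}) = \mathbb{Q}$, after relabeling one has $b_1(Y_1) = 1$ and $b_1(Y_2) = 0$, so $Y_2$ is a $\mathbb{Q}$-homology sphere with $\pi_1(Y_2) \neq 1$ by the Poincar\'e conjecture.

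The first ingredient is a one-parameter family of abelian representations $\rho_1^t : \pi_1(Y_1) \to \text{PSL}_2\mathbb{C}$ for $t \in \mathbb{C}^\times$, obtained by composing the surjection $\pi_1(Y_1) \twoheadrightarrow H_1(Y_1;\mathbb{Z})/\text{torsion} \cong \mathbb{Z}$ with the map sending a generator to $\bigl[\begin{smallmatrix} t & 0\\ 0 & t^{-1}\end{smallmatrix}\bigr]$; its image lies in the diagonal subgroup, with fixed point set $\{0, \infty\} \subset \mathbb{CP}^1$. The second ingredient is any non-trivial representation $\rho_2 : \pi_1(Y_2) \to \text{PSL}_2\mathbb{C}$, which exists by geometrization of $Y_2$: when $\pi_1(Y_2)$ is finite one uses the spherical space-form structure $Y_2 = S^3/\Gamma$ with $\Gamma \subset SO(4)$ and the two-to-one cover $SU(2) \times SU(2) \to SO(4)$ to project into $\text{PSU}(2) \subset \text{PSL}_2\mathbb{C}$, and when infinite one uses the holonomy of a geometric piece of the JSJ decomposition. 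Conjugating $\rho_2$ by a generic element of $\text{PSL}_2\mathbb{C}$ ensures that $\rho_2(\pi_1(Y_2))$ fixes neither $0$ nor $\infty$. The free product structure of $\pi_1(Y)$ then produces a representation $\rho^t := \rho_1^t * \rho_2 : \pi_1(Y) \to \text{PSL}_2\mathbb{C}$ depending analytically on $t$.

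For every $t \in \mathbb{C}^\times$ that is not a root of unity, the image $\rho^t(\pi_1(Y))$ has no common fixed point on $\mathbb{CP}^1$: the fixed set of $\rho_1^t(\pi_1(Y_1))$ is precisely $\{0, \infty\}$, which $\rho_2$ was arranged not to fix. Hence $\rho^t$ is irreducible for $t$ in a Zariski open subset of $\mathbb{C}^\times$. Writing $g \in \pi_1(Y_1)$ for a lift of a generator of $H_1(Y_1;\mathbb{Z})/\text{torsion}$, the trace-squared $\text{tr}^2(\rho^t(g)) = (t + t^{-1})^2$ depends non-trivially on $t$, so the characters $[\rho^t]$ trace out a non-constant analytic arc in the character variety $X_{\text{PSL}_2\mathbb{C}}(Y)$. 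This arc lies in a positive-dimensional irreducible component of $X(Y)$ and contains irreducible characters, contradicting the few-characters hypothesis. The main obstacle in carrying out the plan is the second step: producing a non-trivial $\text{PSL}_2\mathbb{C}$ representation of an arbitrary non-trivial closed orientable $\mathbb{Q}$-homology $3$-sphere group, which ultimately rests on geometrization; once that is in hand the remaining steps reduce to an elementary conjugation and trace calculation.
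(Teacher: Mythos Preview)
Your argument follows the same blueprint as the paper's: decompose $Y = Y_1 \# Y_2$, produce a non-trivial representation on each factor, and exploit the free product structure to obtain a positive-dimensional family of irreducible characters. The one substantive difference is which parameter is varied: you fix $\rho_2$ after a single generic conjugation and deform the diagonal eigenvalue $t$ in $\rho_1^t$, whereas the paper fixes both $\rho_1$ and $\rho_2$ and instead varies the conjugating element $P$ in $\rho_P = \rho_1 * P\rho_2 P^{-1}$. Your choice makes non-constancy of the resulting arc of characters immediate via $\text{tr}^2(\rho^t(g)) = (t+t^{-1})^2$; the paper's assertion that distinct $P$ give non-conjugate $\rho_P$ is stated somewhat loosely but yields the same conclusion.

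There is one soft spot in your construction of $\rho_2$ when $\pi_1(Y_2)$ is infinite. Invoking ``the holonomy of a geometric piece of the JSJ decomposition'' does not finish the job: the holonomy of a JSJ piece is a representation of that piece's fundamental group, which is a \emph{subgroup} of $\pi_1(Y_2)$ rather than a quotient, and for non-hyperbolic geometries the holonomy need not land in $\text{PSL}_2\mathbb{C}$ at all. The paper's case split avoids this: if $H_1(Y_2;\mathbb{Z})$ has a non-trivial cyclic quotient, map through it; otherwise $Y_2$ is a $\mathbb{Z}$-homology sphere not equal to $S^3$, and Zentner's theorem supplies an irreducible $\text{SU}(2)$ representation. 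You correctly identify that this step ultimately rests on geometrization, but the route through Zentner is what actually delivers a representation of the full group $\pi_1(Y_2)$. Finally, your observation about $S^2 \times S^1$ is well taken---it satisfies both hypotheses yet is not irreducible---and the paper's proof tacitly excludes it as well; it is harmless in the intended application.
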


\begin{proof}
Prove by contradiction. If $Y$ is reducible, then we can decompose it as a connected sum $Y_1\sharp Y_2$, where $H_1(Y_1, \mathbb{Q})=\mathbb{Q}$ and $Y_2$ is a $\mathbb{Q}$HS. So $\pi_1(Y)=\pi_1(Y_1)\ast \pi_1(Y_2)$. We want to use PSL$_2\mathbb{C}$ representations of $Y_1$ and $Y_2$ to construct a dimension one component of PSL$_2\mathbb{C}$ character variety of $Y$ containing an irreducible representation so that it contradicts the assumption that $Y$ has few characters.
As $H_1(Y_1, \mathbb{Z})=\mathbb{Z}\oplus (\text{possible torsion})$, we can construct a nontrivial abelian PSL$_2\mathbb{C}$ representation $\rho_1$ of $Y_1$ by composing $\pi_1(Y_1)\twoheadrightarrow \mathbb{Z}$ and $\mathbb{Z}\hookrightarrow \text{PSL}_2\mathbb{C}$.
For $Y_2$, there are two cases. If $H_1(Y_2, \mathbb{Z})$ contains a cyclic subgroup $H$, then similarly we can construct a nontrivial abelian PSL$_2\mathbb{C}$ representation $\rho_2$ of $Y_2$ by composing $\pi_1(Y_2)\twoheadrightarrow H$ and $H\hookrightarrow \text{PSL}_2\mathbb{C}$. If $Y_2$ is actually a $\mathbb{Z}$HS, then by Theorem 9.4 of \cite{Zentner}, there is an irreducible SL$_2\mathbb{C}$ representation $\rho_2$ of $\pi_1(Y_2)$. Moreover we can make $\rho_2$ an irreducible PSL$_2\mathbb{C}$ representation by simply projecting to PSL$_2\mathbb{C}$.
So we can construct a set of PSL$_2\mathbb{C}$ representations $\rho_P=\rho_1 \ast P\rho_2P^{-1}$ of $Y$, where $P$ is any matrix in PSL$_2\mathbb{C}$. These representations are not conjugate to each other as long as they have different $P$ and at least one of them is irreducible as we can vary $P$ so that $\rho_1$ and $P\rho_2P^{-1}$ are not upper triangular at the same time.
\end{proof}

Now we can prove Theorem \ref{Thm_Alex}.
\begin{proof}[Proof of Theorem \ref{Thm_Alex}]
The key idea of proof is to show that a nontrivial $\widetilde{G}$ representation of $\pi_1(M(r))$ exists for rational $r$ in some small interval containing $0$, which mainly uses Lemma \ref{main lemma} and Lemma \ref{Thm_Alex}.

Let $\rho_t$ be the associated path in $R_G(M)$ given by Lemma \ref{deformation}.
As $\rho_0$ factors through $H_1(M; \mathbb{Z})_{\text{free}}\cong \mathbb{Z}$, we can lift it to $\widetilde{G}$ and its lift $\widetilde{\rho_0}$ also factors through $H_1(M; \mathbb{Z})_{\text{free}}$. Hence trans$(\widetilde{\rho_0}(\lambda))=0$ and we can modify $\widetilde{\rho_0}$ so that trans$(\widetilde{\rho_0}(\mu))=0$. Then $\widetilde{\rho_0}$ is mapped to a point on the horizontal axis of $H_{0,0}(M)$ as $\rho_0(\lambda)=I$. The $x$ coordinate of $\widetilde{\rho_0}$, $\ln(|\xi|)$ is nonzero as $\xi\neq \pm 1$. 

As $\rho_0$ lifts, we can extend this lift to a continuous path $\widetilde{\rho}_t$ in $R_{\widetilde{G}}(M)$. Moreover, we can assume $\widetilde{\rho}_t$ is actually in $R^{\text{aug}}_{\widetilde{G}}(M)$, as fixed points of $\widetilde{\rho}_t(\pi_1(\partial M))$ also vary continuously with $t$.

Let $k$ be the index of $\left<\iota_*(\mu)\right>$ in $H_1(M, \mathbb{Z})_{\text{free}}$, where $\iota: \partial M \rightarrow M$ is the inclusion. By construction tr$^2_{\mu}(\widetilde{\rho_0})=\xi^k+2+\xi^{-k}> 4$, so there exists $\varepsilon>0$ such that tr$^2_{\mu}(\widetilde{\rho_t})\geq 4$ for $t\in [-\varepsilon, \varepsilon]$.
As $\rho_t(\mu)$ is hyperbolic, $\rho_t(\lambda)$ is also hyperbolic. Therefore $\rho_t$ is a path in $PH_{G}(M)$ and $\widetilde{\rho_t}$ is a path in $PH_{\widetilde{G}}(M)$.

Then we can build a path $A$ by composing $\widetilde{\rho}_t$ with EV$\circ \iota^*: PH_{\widetilde{G}}(M)\rightarrow HL_{\widetilde{G}}(M)$. That the path $A$ is nonconstant follows from Lemma \ref{Thm_Alex}. Moreover, it is not contained in $x$-axis $L_0$. If it were contained in the $x$-axis, then $\rho_t(\lambda)=I$ as $\rho_t(\lambda)$ is always hyperbolic or trivial. So each $\rho_t$ factors through a representation of the $0$ filling $M(0)$. Therefore $[\rho_t]$ must lie in a component of $X(M(0))$ of dimension at least $1$, contradicting the assumption that $M$ is longitudinally rigid.

Since all points in $A$ come from actual $\widetilde{G}$ representations, there is no ideal point in $A$. As all but at most three Dehn fillings of a knot complement are reducible \cite[Theorem 1.2]{GL1}, we can shrink $A$ if necessary so that none of the Dehn fillings involved is reducible.
The only parabolic point in $H_{0,0}(M)$ is the origin so $A$ contains no parabolic point.
Applying Lemma $\ref{main lemma}$, we get an interval $(0, a)$ or $(-a, 0)$ of orderable Dehn fillings.

Finally, we show $M(0)$ is orderable. The first Betti number of $M(0)$ is $1$ as rational homology groups of $M(0)$ are the same as $S^2\times S^1$. The irreducibility of $M(0)$ follows from Lemma \ref{irred}. So we can apply Theorem 1.1 of \cite{BRW} and show that $\pi_1(M(0))$ is left-orderable, completing the proof of the theorem.

\end{proof}

\begin{remark}
In our numerical computation, an interval of the form $(-a, b)$ with $a,b>0$ is expected to hold in Theorem \ref{Thm_Alex}. This could be observed from the graphs of holonomy extension loci in the previous section. However the author was not able to prove it. See Section \ref{problems} for details.
\end{remark}

\section{Real embeddings of trace fields and orderability}
In this section, we use a different assumption for the manifolds we study, and prove Theorem \ref{Galois}.

Let $Y$ be a closed hyperbolic 3-manifold with fundamental group $\Gamma$. Let $\rho_{hyp}: \Gamma\rightarrow \text{PSL}_2\mathbb{C}$ be the holonomy representation of $Y$. The trace field $K=\mathbb{Q}(\text{tr} \Gamma)$ of $\rho_{hyp}$ is the subfield of $\mathbb{C}$ generated over $\mathbb{Q}$ by the traces of lifts to SL$_2\mathbb{C}$ of all elements in $\rho_{hyp}(\Gamma)$. It is a number field by \cite[Theorem 3.1.2]{Arithmetic}. Assume we have a real embedding $\sigma$ of the trace field $K$ into $\mathbb{R}$.

Define the associated quaternion algebra to be $D=\{\Sigma a_i\gamma_i|a_i\in K, \gamma_i\in \rho_{hyp}(\Gamma)\}$.
To say $D$ splits at the real embedding $\sigma$ means $D\otimes_{\sigma}\mathbb{R}\cong M_2(\mathbb{R})$, which implies that we can conjugate $\Gamma$ into PSL$_2\mathbb{R}$. So we get a Galois conjugate representation $\overline{\rho}: \Gamma\rightarrow \text{PSL}_2\mathbb{R}$. See Section 2.1 and 2.7 of \cite{Arithmetic} for more details.

The following conjecture is due to Dunfield.

\begin{conjecture}
Suppose $M$ is a hyperbolic $\mathbb{Z}$ homology solid torus. Assume the longitudinal filling $M(0)$ is hyperbolic and its holonomy representation has a trace field with a real embedding at which the associated quaternion algebra splits. Then every Dehn filling $M(r)$ with rational $r$ in an interval $(-a,a)$ is orderable.
\end{conjecture}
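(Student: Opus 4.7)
The strategy parallels the proof of Theorem \ref{Galois} but must operate without the genus-$2$ mapping torus hypothesis on $M(0)$. The plan is to construct a $1$-parameter analytic family of $\widetilde{G}$-representations of $\pi_1(M)$ starting from a Galois conjugate of the holonomy representation of $M(0)$, and then to trace the resulting arc through $HL_{\widetilde{G}}(M)$ so that it crosses the lines $L_r$ for small rational $r$ on both sides of zero. First, I would use the real embedding $\sigma: K \hookrightarrow \mathbb{R}$ together with the splitting of the quaternion algebra to obtain a Galois conjugate $\overline{\rho}: \pi_1(M(0)) \to \text{PSL}_2\mathbb{R}$ of the holonomy representation. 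Pulling back by the surjection $\pi_1(M) \twoheadrightarrow \pi_1(M(0))$ yields $\rho_0: \pi_1(M) \to \text{PSL}_2\mathbb{R}$ with $\rho_0(\lambda) = I$, and since $\rho_{\mathrm{hyp}}$ is discrete and faithful, $\rho_0(\mu)$ is nontrivial and generically hyperbolic. Lifting to $\widetilde{\rho}_0 \in R^{\text{aug}}_{\widetilde{G}}(M)$, possibly after twisting by a character of $H^1(M;\mathbb{Z})$, places $\text{EV}\circ\iota^*(\widetilde{\rho}_0)$ on the horizontal axis $L_0 \subset H_{0,0}(M)$ at a point with nonzero $x$-coordinate equal to the log-eigenvalue of $\rho_0(\mu)$.

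Second, I would show that $\widetilde{\rho}_0$ admits a nonconstant analytic deformation $\widetilde{\rho}_t \subset PH_{\widetilde{G}}(M)$. As in the proof of Theorem \ref{Thm_Alex}, this reduces to a cohomological smoothness-plus-integrability analysis of $H^1(\pi_1(M); \mathfrak{sl}_2(\mathbb{R})_{\overline{\rho}})$: a half-lives-half-dies argument on the pair $(M,\partial M)$ should give $\dim H^1(M; \mathfrak{sl}_2(\mathbb{R})_{\overline{\rho}}) \geq \frac{1}{2}\dim H^1(\partial M; \mathfrak{sl}_2(\mathbb{R})_{\overline{\rho}}) = 1$, while obstructions to integration should vanish because $\overline{\rho}$ restricted to $\pi_1(\partial M)$ has hyperbolic image. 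The resulting analytic arc $A \subset H_{0,0}(M)$ through $\text{EV}\circ\iota^*(\widetilde{\rho}_0)$ then consists of honest hyperbolic (hence non-ideal, non-parabolic) points. One further argues $A \not\subset L_0$: otherwise each $\widetilde{\rho}_t$ would send $\lambda$ to the identity, so $[\rho_t]$ would sit in a positive-dimensional component of $X_G(M(0))$ passing through the discrete faithful character $[\overline{\rho}]$, contradicting Mostow rigidity of the closed hyperbolic manifold $M(0)$.

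Third, one needs two-sided transversality of the analytic arc $A$ to $L_0$ at $\widetilde{\rho}_0$, i.e. a finite nonzero slope there. Given this, $A$ meets each $L_r$ for $r$ in an open interval $(-a,a)$ around $0$, and Lemma \ref{main lemma} (combined with \cite[Theorem 1.2]{GL1} to discard the finitely many reducible fillings) yields left-orderability of $M(r)$ for all rational $r \in (-a,a) \setminus \{0\}$; orderability of $M(0)$ itself then follows from $b_1(M(0)) = 1$ together with irreducibility of $M(0)$ (hyperbolic, hence irreducible) via the standard surjection onto $\mathbb{Z}$. The main obstacle is precisely the two-sided transversality: in Theorem \ref{Galois} the mapping-torus hypothesis is used to control the sign of the slope of $A$, producing the one-sided intervals $(-a,0]$ or $[0,a)$ and not excluding tangency or the curve turning back on the other side. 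Obtaining the symmetric statement in general presumably requires a finer analysis of the local structure of $HL_{\widetilde{G}}(M)$ near the Galois-conjugate point, which is exactly why Dunfield's statement remains only a conjecture rather than a theorem.
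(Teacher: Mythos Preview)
The statement you are addressing is stated in the paper as a \emph{conjecture}, not a theorem; the paper gives no proof of it. What the paper does prove is the weaker Theorem~\ref{Galois}, under the additional hypothesis that $M(0)$ is a genus-$2$ mapping torus, and even then only obtains a one-sided interval $(-a,0]$ or $[0,a)$. So there is no ``paper's own proof'' to compare against; the right question is whether your plan would actually close the gap.

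It does not, and the main reason is a misidentification of the obstacle. You write that twisting by $H^1(M;\mathbb{Z})$ places $\text{EV}\circ\iota^*(\widetilde{\rho}_0)$ in $H_{0,0}(M)$, and that the genus-$2$ hypothesis in Theorem~\ref{Galois} is used ``to control the sign of the slope of $A$.'' Both claims are off. Since $M$ is a $\mathbb{Z}$-homology solid torus, every $\varphi\in H^1(M;\mathbb{Z})$ kills the homological longitude $\lambda$, so twisting changes $\text{trans}(\widetilde{\rho}_0(\mu))$ but \emph{never} $\text{trans}(\widetilde{\rho}_0(\lambda))$. Thus twisting lets you arrange $i=0$ but not $j=0$; a priori the arc $A$ lands in some $H_{0,j}(M)$ with $j$ possibly nonzero, and Lemma~\ref{main lemma} is then inapplicable. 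This is precisely where the paper spends the genus-$2$ hypothesis: in Case~2 of the proof of Theorem~\ref{Galois}, the Euler-number argument (pseudo-Anosov monodromy forbids $|\text{eu}(\rho_0|_S)|=2$), together with Lemma~\ref{trans} and the Milnor--Wood bound of Proposition~\ref{bounded}, forces $j=\text{trans}(\widetilde{\rho}_0(\lambda))=0$. The paper's own Remark following that proof says exactly this, and Question~(3) in the final section flags computing $\text{trans}(\lambda)$ as the open problem.

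Your ``two-sided transversality'' issue is real but \emph{separate}: even with the genus-$2$ hypothesis in place, Theorem~\ref{Galois} still only concludes a one-sided interval in the hyperbolic case, so transversality is an additional obstruction the paper never overcomes. In short, to prove the conjecture along these lines you would need to supply \emph{two} missing ingredients: a general argument that $\text{trans}(\widetilde{\rho}_0(\lambda))=0$ (the obstacle the paper isolates), and then the two-sided crossing you describe. Your plan addresses neither; the rest of it---the Galois conjugate construction, the deformation via $H^1$, and the Weil/Mostow rigidity argument that $A\not\subset L_0$---matches the paper's approach to Theorem~\ref{Galois} and is fine as far as it goes.
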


By adding some extra conditions, I am able to prove the following result.

\begin{theorem}\label{Galois}
Suppose $M$ is a hyperbolic $\mathbb{Z}$-homology solid torus. Assume the longitudinal filling $M(0)$ is a hyperbolic mapping torus of a homeomorphism of a genus $2$ orientable surface and its holonomy representation has a trace field with a real embedding at which the associated quaternion algebra splits. Then every Dehn filling $M(r)$ with rational $r$ in an interval $(-a, 0]$ or $[0, a)$ is orderable.
\end{theorem}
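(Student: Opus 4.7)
The plan is to construct a point in the holonomy extension locus $HL_{\widetilde{G}}(M)$ coming from the Galois conjugate representation, and then run the analytic-arc argument exactly as in the proof of Theorem \ref{Thm_Alex} using Theorem \ref{graph} and Lemma \ref{main lemma}.

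First, the splitting of the quaternion algebra at the real embedding $\sigma$ produces a Galois conjugate representation $\bar{\rho}_0 : \pi_1(M(0)) \to \text{PSL}_2\mathbb{R}$. Since the longitude $\lambda$ bounds a disk in the $0$-filling, pulling back along the quotient $\pi_1(M) \twoheadrightarrow \pi_1(M(0))$ gives a representation $\bar{\rho} : \pi_1(M) \to \text{PSL}_2\mathbb{R}$ with $\bar{\rho}(\lambda) = I$. Because $M$ is a $\mathbb{Z}$-homology solid torus, $H^2(M;\mathbb{Z}) = 0$, so $\bar{\rho}$ lifts to $\widetilde{\bar{\rho}} : \pi_1(M) \to \widetilde{G}$; I normalize the lift so that $\text{trans}(\widetilde{\bar{\rho}}(\mu)) = 0$, and then $\text{trans}(\widetilde{\bar{\rho}}(\lambda)) = 0$ automatically since $\bar\rho(\lambda)=I$.

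The next step is where the genus $2$ mapping torus hypothesis is essential: I must show $\bar{\rho}(\mu)$ is hyperbolic, so that $\widetilde{\bar{\rho}}$ lies in $PH_{\widetilde{G}}(M)$. Writing $\pi_1(M(0)) = \pi_1(\Sigma_2) \rtimes \mathbb{Z}$ with $\mu$ projecting to a generator of $\mathbb{Z}$, I would argue that $\bar{\rho}_0|_{\pi_1(\Sigma_2)}$ attains the Milnor--Wood Euler number bound $\pm 2$, so by Goldman's theorem the restriction is discrete and faithful (Fuchsian). The element $\bar{\rho}(\mu)$ then normalizes this cocompact Fuchsian subgroup of $\text{PSL}_2\mathbb{R}$; the normalizer of a cocompact Fuchsian group is itself Fuchsian and contains no parabolic elements. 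Since $\bar{\rho}_0$ is not conjugate to the complex holonomy of $M(0)$ (whose traces are not all real) and the pseudo-Anosov monodromy $\phi$ has no periodic points on the Fuchsian character variety, $\bar{\rho}(\mu)$ cannot be trivial or elliptic of finite order. Thus $\bar{\rho}(\mu)$ must be hyperbolic. Choosing $v \in P^1(\mathbb{C})$ to be a common fixed point of $\bar{\rho}(\pi_1(\partial M))$, the pair $(\widetilde{\bar{\rho}}, v)$ maps under $\text{EV} \circ \iota^*$ to a point $p = (x_0, 0) \in H_{0,0}(M)$ with $x_0 = \log|\text{eig}(\bar{\rho}(\mu))| \neq 0$.

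With $p$ in place, the rest of the argument parallels Theorem \ref{Thm_Alex}. Theorem \ref{graph} presents a neighborhood of $p$ in $H_{0,0}(M)$ as a finite union of analytic arcs, and since $p$ is neither the origin (the unique parabolic point of $H_{0,0}(M)$) nor an ideal point, I extract an analytic arc $A$ through $p$ consisting entirely of hyperbolic boundary representations. The arc $A$ cannot lie in $L_0$ near $p$: every point of $L_0$ comes from an abelian representation, whereas small deformations of $\bar{\rho}$ retain a non-abelian Fuchsian restriction to $\pi_1(\Sigma_2)$. Parametrizing $A$ locally as $(x(t), y(t))$ with $(x(0), y(0)) = (x_0, 0)$, the analytic slope function $f(t) = -y(t)/x(t)$ vanishes at $t = 0$, and its first non-zero Taylor coefficient determines whether $A$ crosses $L_r$ on both sides of $r=0$ or only one. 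In either case we obtain an open interval of slopes $r$ adjacent to $0$ for which $L_r$ meets $A$ at non-parabolic non-ideal points; after shrinking past the finitely many reducible Dehn fillings via Gordon--Luecke, Lemma \ref{main lemma} yields orderability on that interval. For $r = 0$ itself, $M(0)$ is an irreducible fibered hyperbolic $3$-manifold with $b_1 \geq 1$, so $\pi_1(M(0))$ is left-orderable by \cite{BRW}; including this endpoint produces the half-closed interval $(-a, 0]$ or $[0, a)$. The main obstacle is unambiguously the second step, namely extracting from the quaternion-algebra-splitting hypothesis and the genus $2$ mapping torus structure the geometric conclusion that $\bar{\rho}_0|_{\pi_1(\Sigma_2)}$ has maximal Euler number and that $\bar{\rho}(\mu)$ is hyperbolic; this is the only genuinely new technical ingredient beyond Theorem \ref{Thm_Alex}, and it is where both the real embedding hypothesis and the precise value $g=2$ enter.
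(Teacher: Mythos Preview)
Your proposal contains a genuine error at the crucial step, and it is precisely the opposite of what actually happens.

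You claim that $\text{trans}(\widetilde{\bar{\rho}}(\lambda)) = 0$ ``automatically since $\bar\rho(\lambda)=I$''. This is false: a lift of $I \in G$ to $\widetilde{G}$ is a central element and can have any integer translation number, and since $\lambda$ is null-homologous in $M$ the $H^1(M;\mathbb{Z})$-action on lifts does not move $\text{trans}(\widetilde{\bar{\rho}}(\lambda))$ at all. Establishing $\text{trans}(\widetilde{\bar{\rho}}(\lambda))=0$ is in fact the entire content of the genus~$2$ hypothesis, and the paper's argument runs opposite to yours. You assert that $\bar{\rho}_0|_{\pi_1(\Sigma_2)}$ has maximal Euler number $\pm 2$, hence is Fuchsian by Goldman. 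No justification is given, and the claim is wrong: if the restriction were Fuchsian it would determine a hyperbolic structure on $\Sigma_2$ preserved by the monodromy $\phi$, forcing $\phi$ to have finite order in the mapping class group and contradicting that $\phi$ is pseudo-Anosov. The paper uses exactly this observation to conclude $|\text{eu}(\bar{\rho}_0|_{\Sigma_2})| \neq 2$; combined with the Milnor--Wood bound $|\text{eu}|\le 2$ and Lemma~\ref{trans} (evenness), this forces $\text{trans}(\widetilde{\bar{\rho}}(\lambda))=0$. Your attempt to deduce that $\bar\rho(\mu)$ is hyperbolic via the Fuchsian normalizer therefore collapses; the paper instead splits into two cases, handling elliptic $\bar\rho(\mu)$ with the translation extension locus and hyperbolic $\bar\rho(\mu)$ with the holonomy extension locus.

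There is a second gap. You invoke Theorem~\ref{graph} to extract an analytic arc through $p$, but that theorem only says $H_{0,0}(M)$ is locally a finite union of analytic arcs and isolated points; nothing prevents $p$ from being isolated. The paper supplies this via Lemma~\ref{galois_smooth}: Weil's infinitesimal rigidity gives $H^1(M(0);\mathfrak{sl}_2(\mathbb{C})_{\rho_{hyp}})=0$, and a Mayer--Vietoris computation then shows $[\rho_0]$ is a smooth point of $X(M)$ on a one-dimensional component with local parameter $\text{tr}^2_\gamma$ for some $\gamma\ne\lambda$. This same rigidity is what the paper uses to show the arc is not contained in $L_0$ (factoring through $\pi_1(M(0))$ would produce a positive-dimensional component of $X(M(0))$), replacing your argument via ``non-abelian Fuchsian restriction'', which rests on the same incorrect Euler number claim.
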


First let us fix some notations. Denote the holonomy representation of the hyperbolic manifold $M(0)$ by $\rho_{hyp}: \pi_1(M(0))\longrightarrow \text{PSL}_2\mathbb{C}$ and the projection map $p: \pi_1(M)\rightarrow \pi_1(M(0))$.
The composition $\rho_M=p\circ \rho_{hyp}$ has kernel normally generated by the longitude $\lambda$.
The Galois conjugate of $\rho_{M}$ is denoted by $\rho_0$. It is also the Galois conjugate of $\rho_{hyp}$ composed with $p$.
Denote $\rho_V: \pi_1(V)\longrightarrow \text{PSL}_2\mathbb{C}$ the induced representation of $\rho_{hyp}$ on $V=S^1\times D^2 \subset M(0)$, 
and $\rho_{T^2}: \pi_1(T^2)\longrightarrow \text{PSL}_2\mathbb{C}$ the induced representation of $\rho_{hyp}$ on $\partial M=T^2$.

Weil's infinitesimal rigidity in the compact case \cite{Weil}, which is stated as follows, is the key to the proof of Theorem \ref{Galois}.
\begin{theorem}\label{Weil}
Let $M$ be a compact 3-manifold with torus boundary whose interior admits a hyperbolic structure with finite volume, then
$H^1(M(0),\mathfrak{sl}_2(\mathbb{C})_{\rho_{hyp}})=0$. (See also \cite[Section 3.3.3]{porti_french}\cite{FPorti})
\end{theorem}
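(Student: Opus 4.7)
The plan is to mimic the structure of the proof of Theorem \ref{Thm_Alex}: build an analytic path in $R_G(M)$ starting at a representation that maps to a distinguished point on $L_0\subset H_{0,0}(M)$, show the path is forced off $L_0$, and apply Lemma \ref{main lemma}. The role played by Lemma \ref{deformation} in the Alexander-polynomial argument will here be played by Weil's infinitesimal rigidity (Theorem \ref{Weil}) combined with the mapping-torus and quaternion-splitting hypotheses.

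The starting representation is $\rho_0:\pi_1(M)\to\text{PSL}_2\mathbb{R}$, the Galois conjugate of $\rho_{hyp}\circ p$. Since $\lambda\in\ker p$ we have $\rho_0(\lambda)=I$, while $\rho_0(\mu)$ is hyperbolic: it records the monodromy of the mapping-torus structure and, under the splitting real embedding, remains non-central in PSL$_2\mathbb{R}$. Choose a lift $\widetilde{\rho_0}\in R_{\widetilde{G}}(M)$ with $\widetilde{\rho_0}(\lambda)=1$, pick a fixed point $v\in P^1(\mathbb{C})$ of $\rho_0|_{\pi_1(\partial M)}$, and augment. Then $(\widetilde{\rho_0},v)$ maps under $\text{EV}\circ\iota^*$ to a point $(x_0,0,0,0)\in L_0\subset H_{0,0}(M)$ with $x_0=\ln|a|\neq 0$, where $a$ is a square root of the derivative of $\rho_0(\mu)$ at $v$.

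Next I would produce an infinitesimal deformation of $\rho_0$ as a $\pi_1(M)$-representation that does not factor through $\pi_1(M(0))$. Applying Mayer--Vietoris to $M(0)=M\cup_{T^2}V$ (where $V$ is the solid torus glued along $\lambda$) with coefficients in $\mathfrak{sl}_2(\mathbb{R})_{\rho_0}$, using the vanishing $H^1(M(0),\mathfrak{sl}_2(\mathbb{R})_{\rho_0})=0$ from Theorem \ref{Weil} and, via Poincar\'e--Lefschetz duality, the dual vanishing of $H^2(M(0),\mathfrak{sl}_2(\mathbb{R})_{\rho_0})$, one obtains the short exact sequence
\begin{displaymath}
0\to H^1(M,\mathfrak{sl}_2(\mathbb{R})_{\rho_0})\oplus H^1(V,\mathfrak{sl}_2(\mathbb{R})_{\rho_V})\to H^1(T^2,\mathfrak{sl}_2(\mathbb{R})_{\rho_{T^2}})\to 0.
\end{displaymath}
An Euler-characteristic dimension count using that $\rho_0(\mu)$ is hyperbolic (so the centralizer has dimension $1$) produces $\dim H^1(M,\mathfrak{sl}_2(\mathbb{R})_{\rho_0})\geq 1$; the genus $2$ fiber hypothesis is used to ensure that $\rho_0|_{\pi_1(\Sigma_2)}$ is a faithful Fuchsian representation, giving the precise dimensions in the sequence. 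The crucial point is that \emph{every} nonzero class $[z]\in H^1(M,\mathfrak{sl}_2(\mathbb{R})_{\rho_0})$ must satisfy $z(\lambda)\neq 0$: otherwise $z$ would be a cocycle on $\pi_1(M(0))$, giving a nonzero element of $H^1(M(0),\mathfrak{sl}_2(\mathbb{R})_{\rho_0})$ and contradicting Weil rigidity. This is what guarantees that any deformation of $\rho_0$ automatically leaves $L_0$.

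Picking such a class and verifying unobstructedness in $H^2$ (along the lines of \cite[Lemma 7.3]{CD16} and \cite{HP2}), I would integrate to an analytic path $\rho_t:[-\varepsilon,\varepsilon]\to R_G(M)$. For $0<|t|\ll 1$, $\rho_t(\lambda)\neq I$, and since it commutes with the hyperbolic $\rho_t(\mu)$ it lies in the same Cartan and is itself hyperbolic. Continuously extending the lift to $\widetilde{\rho_t}$ and the fixed point to $v_t$ yields a path in $PH_{\widetilde{G}}(M)$, and composition with $\text{EV}\circ\iota^*$ gives an analytic arc $A$ in $H_{0,0}(M)$ through $(x_0,0)$ that leaves $L_0$. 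The arc $A$ contains no ideal points (it comes from actual representations), no parabolic points other than possibly the origin (which is not $(x_0,0)$), and after shrinking $\varepsilon$ we may assume all Dehn fillings involved are irreducible \cite[Theorem 1.2]{GL1}. Writing $A(t)=(x(t),y(t))$, the slope $r(t)=-y(t)/x(t)$ is analytic with $r(0)=0$, so by analyticity its image on $[-\varepsilon,\varepsilon]$ is an interval containing $0$; depending on the parity of the order of vanishing of $y(t)$, this interval has the form $[0,a)$ or $(-a,0]$ (or $(-a,a)$, which we then restrict). Applying Lemma \ref{main lemma} for each rational $r$ in this interval produces the required orderable fillings. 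The main obstacle in carrying out this plan is the cohomological bookkeeping in paragraph three: one must establish $\dim H^1(M,\mathfrak{sl}_2(\mathbb{R})_{\rho_0})\geq 1$ and verify the obstruction vanishes, which is where the genus $2$ hypothesis and the precise nature of the mapping-torus structure are essential.
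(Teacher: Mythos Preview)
Your proposal does not address Theorem~\ref{Weil} at all. That statement is Weil's infinitesimal rigidity---the cohomology vanishing $H^1(M(0),\mathfrak{sl}_2(\mathbb{C})_{\rho_{hyp}})=0$ for the closed hyperbolic manifold $M(0)$---and the paper does not prove it; it is quoted from \cite{Weil} (with pointers to \cite{porti_french,FPorti}) and used as a black box. What you have written is instead a sketch of Theorem~\ref{Galois}, the orderability result for Dehn fillings. You say so yourself: in your third paragraph you invoke ``Weil's infinitesimal rigidity (Theorem~\ref{Weil})'' as an input to your Mayer--Vietoris computation. Your argument therefore assumes the statement you were asked to prove, and there is nothing here constituting a proof of Theorem~\ref{Weil}.

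Even viewed as a sketch of Theorem~\ref{Galois}, several steps are off. First, you assert outright that $\rho_0(\mu)$ is hyperbolic; the paper cannot assume this and treats the elliptic and hyperbolic cases separately (Case~1 and Case~2). Second, your use of the genus~$2$ hypothesis is inverted: you claim it forces $\rho_0|_{\pi_1(\Sigma_2)}$ to be faithful Fuchsian, but the paper uses it to rule out exactly that---a pseudo-Anosov monodromy cannot preserve a hyperbolic structure on the fiber, so $|\mathrm{eu}(\rho_0|_S)|\neq 2$, and then Lemma~\ref{trans} and Proposition~\ref{bounded} pin down $\mathrm{trans}(\widetilde{\rho_0}(\lambda))=0$. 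Without that argument you have no reason to place your arc in $H_{0,0}(M)$ rather than some $H_{0,j}(M)$ with $j\neq 0$, and Lemma~\ref{main lemma} would not apply. Third, your assertion that one can simply ``choose a lift $\widetilde{\rho_0}$ with $\widetilde{\rho_0}(\lambda)=1$'' presupposes this translation-number computation; it is not automatic from $\rho_0(\lambda)=I$.
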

The reference \cite{porti_french} works with SL$_2\mathbb{C}$ rather than $\text{PSL}_2\mathbb{C}$ character varieties.
So to apply the argument in \cite{porti_french}, we will lift $\text{PSL}_2\mathbb{R}$ representations to SL$_2\mathbb{R}$ when necessary. That they always lift is guaranteed by \cite[Proposition 3.1.1]{CS}.

The proof of Theorem \ref{Galois} relies on the following lemma whose proof is based on Weil's theorem.

\begin{lemma}\label{galois_smooth}
Suppose $\rho_0$ is defined as above. Then there exists an arc $c$ in $R_G(M)$ such that
\begin{itemize}
\item[(a)]  $c \ni \rho_0$ is a smooth point of $R_G(M)$.
\item[(b)]  $\text{tr}^2_{\gamma}$ is a local parameter of the arc $c$ near $\rho_0$ (see e.g. \cite[2.1]{shaf1} for def.), where $\gamma\in \pi_1(\partial M)$ is some primitive element different from the longitude $\lambda$.
\end{itemize}
\end{lemma}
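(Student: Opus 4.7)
Proof plan: The plan is to reduce Lemma \ref{galois_smooth} to Weil's infinitesimal rigidity (Theorem \ref{Weil}) by Galois-conjugating the vanishing of $H^1$, and then to read off the 1-dimensional deformation and the local parameter by running a Mayer--Vietoris argument on $M(0)=M\cup_{\partial M}V$.

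First I would invoke Weil rigidity to get $H^1(M(0),\mathfrak{sl}_2(\mathbb{C})_{\rho_{hyp}})=0$. Since the $\mathrm{PSL}_2\mathbb{C}$ representation and character schemes of $\pi_1(M(0))$ are defined over $\mathbb{Q}$ and $\sigma$ acts as a field automorphism on $K$-rational points, Zariski tangent-space dimensions are preserved, so $H^1(M(0),\mathfrak{sl}_2(\mathbb{C})_{\rho_0})=0$ as well. Irreducibility of $\rho_{hyp}$ (a Galois-invariant condition) likewise transfers to $\rho_0$, so $H^0(M(0),\mathfrak{sl}_2(\mathbb{C})_{\rho_0})=0$ and by Poincar\'e duality on the closed $3$-manifold $M(0)$ with the self-dual coefficient $\mathfrak{sl}_2$, also $H^2=H^3=0$.

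Next I would decompose $M(0)=M\cup_{\partial M}V$ with $V$ the solid torus glued along $\lambda$, so $\pi_1(V)=\langle\mu\rangle$. Since $\rho_0(\mu)$ is the Galois conjugate of the loxodromic $\rho_{hyp}(\mu)$ and lies in $\mathrm{PSL}_2\mathbb{R}$, it is regular semisimple; direct computation then gives $\dim H^0(V)=\dim H^1(V)=1$, and using also $\rho_0(\lambda)=I$, one obtains $\dim H^0(\partial M)=1$ and $\dim H^1(\partial M)=2$. The Mayer--Vietoris long exact sequence together with the vanishing of $H^\ast(M(0))$ collapses to
$$0\to H^1(M)\oplus H^1(V)\to H^1(\partial M)\to 0,$$
whence $\dim H^1(M,\mathfrak{sl}_2(\mathbb{C})_{\rho_0})=1$. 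Since $\rho_0$ is irreducible, $[\rho_0]$ is a smooth one-dimensional point of $X(M)$. The splitting hypothesis for the quaternion algebra gives $\mathfrak{sl}_2(\mathbb{C})_{\rho_0}=\mathfrak{sl}_2(\mathbb{R})_{\rho_0}\otimes_{\mathbb{R}}\mathbb{C}$, so the real Zariski tangent space has real dimension $1$ and the real locus carries a smooth analytic arc through $[\rho_0]$; lifting by a local slice in $R_G(M)$ produces the arc $c$ required by (a).

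For (b), the same Mayer--Vietoris isomorphism shows that the restriction $\iota^{\ast}\colon H^1(M)\oplus H^1(V)\xrightarrow{\sim}H^1(\partial M)$ is an isomorphism, and the image of $H^1(V)\to H^1(T^2,\mathfrak{sl}_2)$ is precisely the ``meridian component'' of $H^1(T^2,\mathfrak{sl}_2)=\bigl(\mathfrak{sl}_2/(1-\mathrm{Ad}_{\rho_0(\mu)})\mathfrak{sl}_2\bigr)\oplus\mathfrak{sl}_2^{\mathrm{Ad}_{\rho_0(\mu)}}$. Hence the image of $H^1(M)$ projects nontrivially to the ``longitude component'', so the tangent cocycle to $c$ at $\rho_0$ takes a nonzero value on $\lambda$. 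Concretely, along $c$ the element $\rho_t(\lambda)$ leaves the identity with nonzero first-order derivative, while $\rho_0(\mu)$ is hyperbolic. For any primitive $\gamma=p\mu+q\lambda$ with $p\neq 0$, the element $\rho_t(\gamma)=\rho_t(\mu)^p\rho_t(\lambda)^q$ therefore changes conjugacy class to first order, so $\mathrm{tr}^2_\gamma(\rho_t)$ is non-constant on $c$ near $\rho_0$ and, by smoothness, is a local analytic parameter. Taking $\gamma=\mu$ yields (b).

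The hard part will be making the Galois transfer of Weil's vanishing precise: I must verify that the vanishing of the twisted $H^1$ reflects a scheme-theoretic smoothness at $[\rho_{hyp}]$ that is preserved under $\sigma$, which requires identifying the twisted group cohomology with the Zariski tangent space of the $\mathbb{Q}$-scheme structure on the character variety in a Galois-equivariant way. A secondary technical point is guaranteeing that $\rho_0(\mu)$ is genuinely regular semisimple (and not, say, $\pm I$), so that the solid-torus cohomology dimensions come out as claimed; this should follow from loxodromicity of $\rho_{hyp}(\mu)$ together with the splitting of the quaternion algebra at $\sigma$, but the exceptional cases where $\rho_0(\mu)$ is elliptic or central would need to be excluded, likely using the $\mathbb{Z}$-homology solid torus hypothesis and the specific structure of the genus-$2$ mapping torus.
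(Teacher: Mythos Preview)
Your overall strategy---Weil rigidity, Mayer--Vietoris on $M(0)=M\cup_{\partial M}V$, and a Galois transfer---is the same as the paper's. The main organizational difference is that you transfer Weil's vanishing $H^1(M(0);\mathfrak{sl}_2(\mathbb{C})_{\rho_{hyp}})=0$ to the Galois conjugate $\rho_0$ \emph{first} and then run the cohomology computation there, whereas the paper runs Mayer--Vietoris at $\rho_M=\rho_{hyp}\circ p$, proves smoothness and finds the local parameter via Porti's $\gamma$-regularity theorem, and only \emph{then} transfers both conclusions to $[\rho_0]$ by passing to the $\mathrm{Gal}(\overline{\mathbb{Q}}/\mathbb{Q})$-orbit of the component containing $[\rho_M]$. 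Your route avoids citing Porti's black box but requires you to justify the Galois invariance of $\dim H^1$ directly; the paper's route postpones that to a single scheme-theoretic step on $X(M)$. Both are valid.

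There is, however, a genuine gap in your argument for (b). From the Mayer--Vietoris isomorphism you correctly deduce that the tangent cocycle $u$ has $u(\lambda)\neq 0$ in $\mathfrak{sl}_2^{\mathrm{Ad}\,\rho_0(\mu)}$. But this does \emph{not} imply that $\mathrm{tr}^2_\gamma$ has nonzero differential for every primitive $\gamma=p\mu+q\lambda$ with $p\neq 0$, nor in particular for $\gamma=\mu$. Diagonalizing $\rho_0(\mu)=\mathrm{diag}(a,a^{-1})$ and writing $u(\lambda)=\mathrm{diag}(c,-c)$ with $c\neq 0$ and $x$ for the diagonal part of $u(\mu)$, one computes $\frac{d}{dt}\mathrm{tr}(\rho_t(\gamma))\big|_{t=0}=(px+qc)(a^p-a^{-p})$; Mayer--Vietoris forces $c\neq 0$ but says nothing about $x$, so $\gamma=\mu$ fails precisely when $x=0$. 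The fix is easy: since $c\neq 0$, at least one of $\gamma=\mu$ and $\gamma=\mu\lambda$ works, so \emph{some} primitive $\gamma\neq\lambda$ gives a local parameter---which is all the lemma asserts. Separately, your closing worry about excluding the elliptic case for $\rho_0(\mu)$ is misplaced: the dimension counts $\dim H^1(V)=1$ and $\dim H^1(T^2)=2$ only require $\rho_V$ and $\rho_{T^2}$ to be nontrivial, which follows immediately from $\rho_{hyp}(\mu)$ being loxodromic; neither the genus-$2$ hypothesis nor the $\mathbb{Z}$-homology solid torus condition is needed for this lemma.
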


\begin{proof}

(a) First, let us prove that $\rho_0$ is a smooth point of $R_G(M)$.
We compute the Mayer-Vietoris sequence for cohomology with local coefficients, associated to the decomposition $M(0)=M\cup_{\partial M}V$.

\begin{align*}
        \cdots &\rightarrow H^1(M(0); \mathfrak{sl}_2(\mathbb{C})_{\rho_{hyp}})\\
        &\rightarrow H^1(V; \mathfrak{sl}_2(\mathbb{C})_{\rho_{V}})\oplus H^1(M; \mathfrak{sl}_2(\mathbb{C})_{\rho_M}) \rightarrow H^1(T^2; \mathfrak{sl}_2(\mathbb{C})_{\rho_{T^2}})\rightarrow \\
        &\rightarrow H^2(M(0); \mathfrak{sl}_2(\mathbb{C})_{\rho_{hyp}}) \rightarrow\cdots
    \end{align*}

The first term $H^1(M(0); \mathfrak{sl}_2(\mathbb{C})_{\rho_{hyp}})=0$ follows from Weil's infinitesimal rigidity Theorem \ref{Weil}.
So $H^1(V; \mathfrak{sl}_2(\mathbb{C})_{\rho_{V}})\oplus H^1(M; \mathfrak{sl}_2(\mathbb{C})_{\rho_M}) \rightarrow H^1(T^2; \mathfrak{sl}_2(\mathbb{C})_{\rho_{T^2}})$ is an injection.
To see that it is actually an isomorphism, note that by Poincar\'{e} duality $H^2(M(0); \mathfrak{sl}_2(\mathbb{C})_{\rho_{hyp}})\cong H^1(M(0); \mathfrak{sl}_2(\mathbb{C})_{\rho_{hyp}})=0$.

Let $X_c(M)$ be the component of $X(M)$ containing $[\rho_M]$.
As $\rho_V$ and $\rho_{T^2}$ are nontrivial, then by \cite[Theorem 1.1 (i)]{BN}, we have $\dim_{\mathbb{C}} H^1(V;  \mathfrak{sl}_2(\mathbb{C})_{\rho_{V}})=1$ and $\dim_{\mathbb{C}} H^1(T^2; \mathfrak{sl}_2(\mathbb{C})_{\rho_{T^2}})=2$. So $\dim_{\mathbb{C}} H^1(M; \mathfrak{sl}_2(\mathbb{C})_{\rho_M})= 1$. By \cite[Proposition 3.5]{porti_french}, we have an inclusion of the Zariski tangent space $T^{\text{Zar}}_{\rho_M}(X_c(M))\hookrightarrow H^1(M; \mathfrak{sl}_2(\mathbb{C})_{\rho_M})$. So $\dim_{\mathbb{C}} T^{\text{Zar}}_{\rho_M}(X_c(M))\leq \dim_{\mathbb{C}}H^1(M; \mathfrak{sl}_2(\mathbb{C})_{\rho_M})= 1$.

Following from Thurston's result \cite[Proposition 3.2.1]{CS}, $\dim_{\mathbb{C}} X_c(M)\geq 1$ as $\rho_M(\text{im}(\pi_1(\partial M)\rightarrow \pi_1(M)))=\mathbb{Z}$.
Since $\dim_{\mathbb{C}} X_c(M)\leq \dim_{\mathbb{C}} T^{\text{Zar}}_{\rho_M}(X_c(M))$, then $\dim_{\mathbb{C}} X_c(M)=\dim T^{\text{Zar}}_{\rho_M}(X_c(M))=\dim_{\mathbb{C}} H^1(M; \mathfrak{sl}_2(\mathbb{C})_{\rho_M})= 1$. Therefore $[\rho_M]$ is a smooth point of $X(M)$.

To show that the Galois conjugate $\rho_0$ of $\rho_M$ is also a smooth point, we use the same argument as in the proof of \cite[Lemma 8.3]{CD16}. Construct $X_1$ by taking the $\mathbb{C}$-irreducible component $X_0$ of $X(M)$ containing $[\rho_M]$, which must be defined over some number field, and then take the union of the $Gal(\overline{\mathbb{Q}}/\mathbb{Q})$-orbit of $X_0$. Then $X_1$ is the unique $\mathbb{Q}$-irreducible component of $X(M)$ that contains $[\rho_M]$.  Since $X_1$ is invariant under the $Gal(\overline{\mathbb{Q}}/\mathbb{Q})$-action, it contains $[\rho_0]$ as well as $[\rho_M]$. As by definition,  $T^{\text{Zar}}_{\rho_M}(X(M))$ is defined by derivatives of a set of polynomials. Then $T^{\text{Zar}}_{\rho_0}(X(M))$ is defined by derivatives of Galois conjugates of this set of polynomials and thus should have dimension $1$, same as $T^{\text{Zar}}_{\rho_M}(X(M))$. On the other hand, any component of $X_1$ has the same dimension as $X_c(M)$, which is $1$. So $[\rho_0]$ is a smooth point of $X_1$ and thus of $X(M)$.

Moreover, By Th\'{e}or\`{e}me 3.15 of \cite{porti_french}, $\rho_M$ is $\gamma$-regular for some simple closed curve $\gamma\subset\partial M$, which means that the inclusion $H^1(M,\mu; \mathfrak{sl}_2(\mathbb{C})_{\rho_M})\rightarrow H^1(M; \mathfrak{sl}_2(\mathbb{C})_{\rho_M})$ is nonzero (see \cite[Definition 3.21]{porti_french} for definition). So $\text{tr}_{\gamma}$ is a local parameter of $X(M)$ at $[\rho_M]$. Since $[\rho_M]$ is not $\lambda$-regular as $\rho_M(\lambda)=I$, then $\gamma$ must be a curve different from $\lambda$. Locally the sign of $\text{tr}_{\gamma}$ does not change, so we could make $\text{tr}^2_{\gamma}$ the local parameter. Whether a regular function is a local parameter at a smooth point on the curve $X_1$ can be expressed purely algebraically and hence is $Gal(\overline{\mathbb{Q}}/\mathbb{Q})$-invariant. It follows that $[\rho_0]$ is also a smooth point of $X_1$ with local parameter $\text{tr}_{\gamma}^2$.

Applying \cite[Proposition 2.8]{CD16}, we get a smooth arc $\overline{c}$ of real points in $X_{\mathbb{R}}(M)$ containing $[\rho_0]$, locally defined by $\text{tr}^2_{\gamma}$ being real.
By restricting $\epsilon$ if necessary, we can assume that every character in $\overline{c}$ comes from an irreducible PSL$_2\mathbb{C}$ representation. Since $[\rho_0]\in X_{G}(M)$ is irreducible, we can restrict $\epsilon$ so that $\overline{c}$ is actually contained in $X_{G}(M)$ as both $X_{G}(M)$ and $X_{\text{SU}_2(\mathbb{C})}(M)$ are closed in $X(M) $\cite[Lemma 2.12]{CD16}.
Then by \cite[Lemma 2.11]{CD16} we can lift $\overline{c}$ to $c\in R_{G}(M)$ and $c$ is still parameterized by $\text{tr}^2_{\gamma}$. 
This completes the proof of (b).
\end{proof}

\begin{lemma}\label{trans}
trans$(\widetilde{\rho_0}(\lambda))$ is an even integer.
\end{lemma}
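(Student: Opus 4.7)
The plan is to show the composition $\pi_1(M)\xrightarrow{\widetilde{\rho_0}}\widetilde{G}\to\mathrm{SL}_2\mathbb{R}$ sends $\lambda$ to the identity matrix: since $\ker(\widetilde{G}\to\mathrm{SL}_2\mathbb{R})$ is the index-$2$ subgroup of the center of $\widetilde{G}$ consisting of elements with even translation number, this yields the conclusion.

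First I would produce an $\mathrm{SL}_2\mathbb{R}$-lift of $\rho_0$ directly from the hyperbolic holonomy. By \cite[Proposition~3.1.1]{CS}, $\rho_{hyp}$ lifts to some $\hat\rho_{hyp}\colon\pi_1(M(0))\to\mathrm{SL}_2\mathbb{C}$. The hypothesis that the quaternion algebra splits at the real place $\sigma$ produces a conjugate of the Galois twist $\rho_{hyp}^\sigma$ whose image lies in $\mathrm{PSL}_2\mathbb{R}$; and any $\mathrm{SL}_2\mathbb{C}$ matrix whose $\mathrm{PSL}_2\mathbb{C}$-class lies in $\mathrm{PSL}_2\mathbb{R}$ is itself in $\mathrm{SL}_2\mathbb{R}$, since the scalar relating it to a real representative must have square $1$. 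Pre-composing the resulting $\hat\rho_{hyp}^\sigma\colon\pi_1(M(0))\to\mathrm{SL}_2\mathbb{R}$ with $p\colon\pi_1(M)\to\pi_1(M(0))$ gives $\hat\rho_0\colon\pi_1(M)\to\mathrm{SL}_2\mathbb{R}$, an $\mathrm{SL}_2\mathbb{R}$-lift of $\rho_0$, and because $\lambda\in\ker p$, we get $\hat\rho_0(\lambda)=I$.

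The second step is to transfer this identity to $\widetilde{\rho_0}$ itself. The push-down $\bar\rho_0$ of $\widetilde{\rho_0}$ through $\widetilde{G}\to\mathrm{SL}_2\mathbb{R}$ is another $\mathrm{SL}_2\mathbb{R}$-lift of $\rho_0$, and two such lifts differ by a homomorphism $\pi_1(M)\to\{\pm I\}$, i.e.\ a class in $H^1(M;\mathbb{Z}/2)$. Since $M$ is a $\mathbb{Z}$-homology solid torus, $[\lambda]=0$ in $H_1(M;\mathbb{Z})$, so every element of $H^1(M;\mathbb{Z}/2)$ vanishes on $\lambda$; hence $\bar\rho_0(\lambda)=\hat\rho_0(\lambda)=I$, forcing $\widetilde{\rho_0}(\lambda)\in\ker(\widetilde{G}\to\mathrm{SL}_2\mathbb{R})$, which is precisely the set of central elements with even translation number. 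The only place to be careful is distinguishing the $\mathbb{Z}$-ambiguity of lifts to $\widetilde{G}$ from the $\mathbb{Z}/2$-ambiguity of lifts to $\mathrm{SL}_2\mathbb{R}$; both are killed on $\lambda$ by null-homology, so apart from this bookkeeping I do not anticipate a substantive obstacle.
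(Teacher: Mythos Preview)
Your proposal is correct and follows the same skeleton as the paper: show that $\widetilde{\rho_0}(\lambda)$ lands in $\ker(\widetilde{G}\to\mathrm{SL}_2\mathbb{R})$, then read off that the translation number is even. The paper compresses this into one line---asserting that the $\mathrm{SL}_2\mathbb{R}$ image of $\widetilde{\rho_0}(\lambda)$ is $I$ and citing \cite[Claim~8.5]{CD16}---whereas you actually supply the missing justification for why the image is $+I$ rather than $-I$: you build one explicit $\mathrm{SL}_2\mathbb{R}$ lift through $\pi_1(M(0))$ (so $\lambda\mapsto I$ automatically), and then use null-homology of $\lambda$ in the $\mathbb{Z}$-homology solid torus $M$ to kill the $H^1(M;\mathbb{Z}/2)$ ambiguity between lifts. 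This is a genuine improvement in completeness over the paper's terse argument, not a different route.
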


\begin{proof}
When mapping down to SL$_2\mathbb{R}$, the image of $\widetilde{\rho_0}(\lambda)\in \widetilde{\text{PSL}_2\mathbb{R}}$ is $I$. It follows from \cite[Claim 8.5]{CD16} that trans$(\widetilde{\rho_0}(\lambda))$ is an even integer.
\end{proof}

Now we are ready to prove Theorem \ref{Galois}.

\begin{proof}[Proof of Theorem \ref{Galois}]
The idea of proof is similar to that of Theorem \ref{Thm_Alex}: show that a nontrivial $\widetilde{G}$ representation of $\pi_1(M(r))$ exists for  rational $r$ in some small interval containing $0$. 

First we lift the arc $c\subset R_G(M)$ as constructed in Lemma \ref{galois_smooth} to $\tilde{c}\in R_{\widetilde{G}}(M)$. In the case of hyperbolic integer solid torus $M$, $H^2(\pi_1(M);\mathbb{Z})\cong H^2(M; \mathbb{Z}) = 0$, so we can always lift $G$ representations of $M$ to $\widetilde{G}$.

Since $M(0)$ admits a complete hyperbolic structure, elements in $\pi_1(M(0))$ are mapped to loxodromic elements in PSL$_2\mathbb{C}$ by $\rho_{hyp}$. So $\mu\in \pi_1(M(0))$ is mapped to either a hyperbolic or elliptic element under the Galois conjugate $\rho_0$. Therefore we divide our proof into two cases according to the image of the meridian $\mu$.

\begin{remark}
We do not consider the case that $\mu$ is mapped to a parabolic element, because $\rho_{\text{hyp}}(\mu)$ is loxodromic and Galois conjugate cannot take a complex number with norm greater than $2$ to one with norm $2$.
\end{remark}

\textbf{Case 1:} $\mu$ is mapped to an elliptic element by $\rho_0$.

At $\widetilde{\rho_0}$, the local parameter $s=\text{tr}^2(\widetilde{\rho_0}(\gamma))<4$. As $\tilde{c}$ is parameterized near $\widetilde{\rho_0}$ by $\text{tr}_{\gamma}^2 \in [s-\epsilon, s+\epsilon]$, we can require $s+\epsilon<4$ so that $\tilde{c}\subset PE_{\widetilde{G}}(M)$.
Then we map $\tilde{c}$ down to an arc $A\subset EL_{\widetilde{G}}(M)$ which is locally parameterized by $\text{tr}^2_{\gamma}$ on some small interval $[0,\delta]$.

To obtain an interval of orderable Dehn fillings, we want to apply Lemma 8.4 of \cite{CD16} which works similarly as Lemma \ref{main lemma}. So we need to show that $A$ is not contained in the horizontal axis $L_0$ of $EL_{\widetilde{G}}(M)\subset \mathbb{R}^2$. If it is contained in $L_0$, then all $\rho_t$ factor through $\pi_1(M(0))$ and it follows that $[\rho_t]$ lie in an irreducible component of $X(M(0))$ with complex dimension at least one. But we have seen in the proof of Lemma \ref{galois_smooth} that $H^1(M(0); \mathfrak{sl}_2(\mathbb{C})_{\rho_{hyp}})=0$, so $1\leq \dim T^{\text{Zar}}_{\rho_0}(X(M(0)))=\dim T^{\text{Zar}}_{\rho_{hyp}}(X(M(0)))\leq\dim_{\mathbb{C}} H^1(M(0); \mathfrak{sl}_2(\mathbb{C})_{\rho_{hyp}})= 0$, which is a contradiction.

Now we can draw an arc $A$ inside the translation extension locus $EL_{\widetilde{G}}(M)$ near $\widetilde{\rho_0}$. It contains no ideal point as all points on $A$ come from $\widetilde{G}$ representations. 
Applying Lemma 8.4 of \cite{CD16}, we get $a>0$ so that $L_r$ meets $EL_{\widetilde{G}}(M)$ for all $r$ in the interval $(-a,a)$. Invoking \cite[Theorem 1.2]{GL1} (at most three Dehn fillings of a knot complement are reducible), we can shrink $a$ to make $M(r)$ irreducible. Then we can apply Lemma 4.4 of \cite{CD16}. 

\textbf{Case 2:} $\mu$ is mapped to a hyperbolic element by $\rho_0$.

This case is similar to Case 1 except we start with s=tr$^2(\widetilde{\rho_0}(\gamma))>4$. As $\tilde{c}$ is parameterized by $\text{tr}_{\gamma}^2 \in [s-\epsilon, s+\epsilon]$, we can require $s-\epsilon>4$ so that $\tilde{c}\subset PH_{\widetilde{G}}(M)$.
Again we map $\tilde{c}$ down to an arc $A\subset HL_{\widetilde{G}}(M)$ which is locally parameterized by $\text{tr}^2_{\gamma}$ on some small interval $[-\delta,\delta]$.

By Lemma \ref{invariant}, we can always choose a lift $\widetilde{\rho_0}$ such that trans$(\widetilde{\rho_0}(\mu))=0$ and therefore by continuity we can make $A$ lie in $H_{0,j}(M)$. To show $A\subset H_{0,0}(M)$, we compute $j=\text{trans}(\widetilde{\rho_0}(\lambda))$ and show it is $0$. By assumption, $M(0)$ is a mapping torus of a homeomorphism of a genus $2$ surface $S$. Then $M(0)=M_{\phi}$ where $\phi$ is a Pseudo-Anosov map of $S$ since $M(0)$ is hyperbolic. Suppose there is a $G$ representation $\rho_0$ of $\pi_1(M(0))$, then it restricts to a $G$ representation $\rho_0|_S$ of $\pi_1(S)$.
Let $\text{eu}(\rho_0|_S)$ be the Euler number of $\rho_0|_S$ as defined in \cite{Goldman} (or equivalently in \cite{Milnor, Wood}). It is equal to trans$(\widetilde{\rho_0}([a_1, b_1][a_2, b_2]))$ with $a_1, b_1, a_2, b_2$ the standard generators of $\pi_1(S)$ and is thus equal to trans$(\widetilde{\rho_0}(\lambda))$.
We claim that $|\text{eu}(\rho_0|_S)|\neq 2$. Otherwise $\rho_0|_S$ would determine a hyperbolic structure on $S$ (Milnor-Wood inequality \cite{Milnor, Wood}) which is invariant under $\phi$, implying that $\phi$ has finite order which contradicts that $\phi$ is Pseudo-Anosov. So $|\text{trans}(\widetilde{\rho_0}(\lambda))|=|\text{eu}(\rho_0|_S)|\neq 2$. By Lemma \ref{trans} and Proposition \ref{bounded}, we must have $\text{trans}(\widetilde{\rho_0}(\lambda))=0$.

Claim that $A$ is not contained in the horizontal axis $L_0$ of $H_{0,0}\subset\mathbb{R}^2$. If it is contained in the horizontal axis, then all $\rho_t$ factor through $\pi_1(M(0))$ and it follows that $[\rho_t]$ lie in an irreducible component of $X(M(0))$ with complex dimension at least one. But we have seen that $H^1(M(0); \mathfrak{sl}_2(\mathbb{C})_{\rho_{hyp}})=0$, so $1\leq \dim T^{\text{Zar}}_{\rho_0}(X(M(0)))= \dim T^{\text{Zar}}_{\rho_{hyp}}(X(M(0))) \leq\dim_{\mathbb{C}} H^1(M(0); \mathfrak{sl}_2(\mathbb{C})_{\rho_{hyp}})= 0$, which is a contradiction.

So we have constructed an arc $A\subset H_{0,0}(M)$ that is not contained in $L_0$ near $\widetilde{\rho_0}$. Then we can find $a>0$ such that $L_r$ meets $H_{0, 0}(M)$ at points that are not parabolic or ideal and $M(r)$ irreducible for all $r$ in an interval $(0, a)$ or $(-a, 0)$. Applying Lemma \ref{main lemma} then tells us $M(r)$ is orderable for $r$ in $(0, a)$ or $(-a, 0)$.

Finally, we show $M(0)$ is orderable. The first Betti number of $M(0)$ is $1$ as the integral homology groups of $M(0)$ are the same as those of $S^2\times S^1$. The irreducibility of $M(0)$ follows from the assumption that it is hyperbolic. So we can apply Theorem 1.1 of \cite{BRW} and show that $\pi_1(M(0))$ is left-orderable, completing the proof of the theorem.

\end{proof}

\begin{remark}
The assumption that $M(0)$ being a mapping torus of genus $2$ is used to show $\text{trans}(\lambda)=0$. This is a very strong hypothesis. However, when $\mu$ is mapped to an elliptic element, $M(0)$ being a mapping torus is not needed at all. When $\mu$ is mapped to hyperbolic, the author does not know how to prove the theorem under a weakened assumption.
\end{remark}

Using the method of Calegari \cite[Section 3.5]{real_place}, we are able to prove the following result.
\begin{proposition}
Suppose $Y$ is a mapping torus of a closed surface $S$ of genus at least $2$. If $Y$ has a faithful $G$ representation $\rho$, then $\rho|_S$ can never be discrete.
\end{proposition}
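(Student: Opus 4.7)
The plan is to argue by contradiction, leveraging the rigidity of cocompact Fuchsian groups. Suppose $\rho|_{\pi_1(S)}$ is discrete; then $\Gamma := \rho(\pi_1(S))$ is a torsion-free discrete subgroup of $G = \mathrm{PSL}_2\mathbb{R}$ isomorphic to $\pi_1(S)$, the torsion-freeness following from faithfulness of $\rho$ together with $\pi_1(S)$ being torsion-free for genus $\geq 2$.

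The first substantive step is to show that $\Gamma$ is cocompact. Since $\Gamma$ acts freely and properly discontinuously on $\mathbb{H}^2$, the quotient $\Sigma := \mathbb{H}^2/\Gamma$ is an aspherical surface with $\pi_1(\Sigma) \cong \pi_1(S)$. A non-compact surface deformation retracts onto a graph and so has free fundamental group, but $\pi_1(S_g)$ is not free for $g \geq 2$; hence $\Sigma$ is closed and in fact homeomorphic to $S$. The key rigidity fact to establish next is that the normalizer $N := N_G(\Gamma)$ contains $\Gamma$ with finite index. The group $N$ acts on $\Sigma$ by isometries, and I would verify that the kernel of this action is exactly $\Gamma$: if $g \in N$ fixes every $\Gamma$-orbit, after composing with a suitable element of $\Gamma$ we may assume $g$ fixes a chosen point $x_0 \in \mathbb{H}^2$, and then using connectedness of $\mathbb{H}^2$ and torsion-freeness of $\Gamma$ one shows the residual rotation actually coincides with an element of $\Gamma$. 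Hence $N/\Gamma$ embeds into the finite isometry group of the closed hyperbolic surface $\Sigma$, so $[N : \Gamma] < \infty$.

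The contradiction then follows quickly from the short exact sequence $1 \to \pi_1(S) \to \pi_1(Y) \to \mathbb{Z} \to 1$. Pick a lift $t \in \pi_1(Y)$ of a generator of $\mathbb{Z}$; conjugation by $t$ preserves $\pi_1(S)$, so $\rho(t) \in N$. Finiteness of $N/\Gamma$ gives some $n > 0$ with $\rho(t^n) \in \Gamma$, i.e.\ $\rho(t^n) = \rho(s)$ for some $s \in \pi_1(S)$; faithfulness of $\rho$ then forces $t^n = s \in \pi_1(S)$, contradicting the fact that $t^n$ projects to $n \neq 0$ in $\mathbb{Z}$. The main technical obstacle I anticipate is correctly pinning down the kernel of the action $N \to \mathrm{Isom}(\Sigma)$; the cocompactness argument and the faithfulness bookkeeping at the end are formal, and the hypothesis $g(S) \geq 2$ is used precisely to force $\pi_1(S)$ to be non-free and to admit only closed hyperbolic quotients.
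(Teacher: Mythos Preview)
Your proof is correct and follows essentially the same strategy as the paper: assume $\rho|_{\pi_1(S)}$ is discrete, identify $\Gamma=\rho(\pi_1(S))$ as a cocompact Fuchsian group, and exploit the finiteness of $N_G(\Gamma)/\Gamma$ (equivalently, of $\mathrm{Isom}(\Sigma)$) to force a contradiction via the element $\rho(t)$. The one place you differ is the endgame. The paper argues that some power $\rho(t)^k$ centralizes $\Gamma$, then uses that $\Gamma$ contains two hyperbolic elements with distinct axes to conclude $\rho(t)^k=1$, contradicting faithfulness together with the hypothesis that $\pi_1(Y)$ is torsion-free. You instead pass directly from $\rho(t)^n\in\Gamma$ to $t^n\in\pi_1(S)$ via injectivity of $\rho$, contradicting the short exact sequence; this sidesteps the centralizer computation and, incidentally, never invokes the torsion-freeness of $\pi_1(Y)$ (which is in any case automatic for such mapping tori). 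Your worry about identifying the kernel of $N\to\mathrm{Isom}(\Sigma)$ with $\Gamma$ is unfounded: since $\Gamma$ is torsion-free it acts freely, so this is the standard deck-transformation fact for the covering $\mathbb{H}^2\to\Sigma$.
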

\begin{proof}
First notice that $\pi_1(Y)$ has no torsion. This is because $Y$ is an Eilenberg-Maclane space and a finite dimensional CW-complex as a mapping torus. We claim that $\rho$ has indiscrete image. Otherwise $\rho(\pi_1(Y))\leq G=\text{PSL}_2(\mathbb{R})=\text{Isom}^+(\mathbb{H}^2)$ would be a torsion-free Fuchsian group and thus act on $\mathbb{H}^2$ with quotient isometric to a complete hyperbolic surface, which is impossible as $Y$ is a closed $3$ manifold.

Now suppose $\rho|_S$ is discrete, then $\rho|_S: \pi_1(S) \rightarrow G$ determines some complete hyperbolic structure $\mathbb{H}^2/\rho(\pi_1(S))$ on $S$ as it is faithful. So $\rho(\pi_1(S))$ consists of hyperbolic elements only.  
Let $\pi_1(Y)=\left<t\right>\ltimes \pi_1(S)$, where the conjugation action of $t$ on $\pi_1(S)$ is given by the monodromy of the bundle. Then $\rho(t)$ acts on $\rho(\pi_1(S))$ by conjugation and normalizes $\rho(\pi_1(S))$. 
Since $\rho(t) \in G$, it gives an isometry of $\mathbb{H}^2$ and thus an isometry of $S$. On the other hand, any isometry of $S$ is of finite order as it has to preserve the hyperbolic structure. Then the action of $\rho(t)$ on $\rho(\pi_1(S))$ by conjugation must be of finite order. To show that actually $\rho(t)$ is a finite order element in $G$, notice that $\rho(\pi_1(S))$ has at least two hyperbolic elements of different axes. But this contradicts the fact that $\rho$ is a faithful representation as $\pi_1(Y)$ has no torsion. So $\rho|_S$ could not be discrete.
\end{proof}

We are also able to prove the following result.
\begin{proposition}
Suppose $M$ is a hyperbolic $\mathbb{Z}$ homology solid torus. Assume the longitudinal filling $M(0)$ is hyperbolic and the trace field of its holonomy representation has a real embedding at which the associated quaternion algebra splits. Then the $\mathbb{Z}$HS Dehn filling $M(\frac{1}{n})$ is orderable for all $n\in \mathbb{Z}$ large enough (or $-n$ large enough).
\end{proposition}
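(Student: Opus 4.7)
The strategy is to follow the proof of Theorem \ref{Galois} through the construction of an analytic arc of lifted representations through $\widetilde{\rho_0}$, and then to replace the mapping-torus input (which forced $\text{trans}(\widetilde{\rho_0}(\lambda))=0$) by an application of the $D_\infty(M)$ symmetry of the holonomy extension locus. Under $\text{EV}(\cdot)(\lambda^n\mu)$, a point of $H_{i_0,j_0}(M)$ has second coordinate $nj_0+i_0$; for slope $1/n$ we therefore need the relevant arc to live in $H_{i_0,j_0}(M)$ with $i_0=-nj_0$, which the $D_\infty(M)$ action supplies for every $n$. This is exactly what restricts the conclusion to slopes of the form $1/n$ with $|n|$ large, rather than an entire interval.

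Concretely, I first apply Lemma \ref{galois_smooth} to obtain an analytic arc $c\subset R_G(M)$ through $\rho_0$ with $\text{tr}^2_\gamma$ as local parameter for some primitive $\gamma\neq\lambda$, and lift it to $\widetilde{\rho}_t$ in $R^{\text{aug}}_{\widetilde{G}}(M)$ using $H^2(\pi_1(M);\mathbb{Z})=0$. I normalize so that $\text{trans}(\widetilde{\rho_0}(\mu))=0$ and set $j_0=\text{trans}(\widetilde{\rho_0}(\lambda))$, an even integer by Lemma \ref{trans}. If $\rho_0(\mu)$ is elliptic, then $\rho_0|_{\pi_1(\partial M)}$ is elliptic (since $\rho_0(\lambda)=I$), the arc lies in $PE_{\widetilde{G}}(M)$, and Case 1 of the proof of Theorem \ref{Galois}, via \cite[Lemma 8.4]{CD16}, produces an open interval of orderable fillings that contains $1/n$ for all large $|n|$. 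If $\rho_0(\mu)$ is hyperbolic, then $\widetilde{\rho}_t\subset PH_{\widetilde{G}}(M)$ and $\text{EV}\circ\iota^*$ sends it to an analytic arc $A=(x(t),y(t))$ in $H_{0,j_0}(M)$ through $(x_0,0)$, where $x_0=\ln|a_\mu|\neq 0$ and $y(0)=\ln|a_\lambda|=0$ because $\rho_0(\lambda)=I$. The Weil-rigidity argument from the proof of Theorem \ref{Galois} still forces $A\not\subset L_0$, so $y(t)=ct^k+O(t^{k+1})$ at $t=0$ for some $k\geq 1$ and $c\neq 0$.

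For the hyperbolic case, the $D_\infty(M)$ action produces an isomorphic copy of $A$ inside $H_{-nj_0,\,j_0}(M)$ for every $n\in\mathbb{Z}$, with the same image in $\mathbb{R}^2$. Solving $y(t)/x(t)=-1/n$ via $t^k\approx -x_0/(cn)$ yields, for all sufficiently large $|n|$ of one sign determined by the parity of $k$ and the signs of $x_0$ and $c$, a real solution $t_n$ with $|t_n|\to 0$. The resulting point $(x(t_n),y(t_n))\in L_{1/n}\cap H_{-nj_0,\,j_0}(M)$ is non-ideal, being the image of the explicit representation $\widetilde{\rho}_{t_n}$, and non-parabolic, being close to $(x_0,0)\neq 0$ while the only parabolic point of any component is the origin; moreover $M(1/n)$ is irreducible for $|n|$ large by \cite[Theorem 1.2]{GL1}. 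With $\gamma_n=\lambda^n\mu$, the computation $\text{EV}(\widetilde{\rho}_{t_n})(\gamma_n)=\bigl(n\,y(t_n)+x(t_n),\;nj_0+(-nj_0)\bigr)=(0,0)$ together with Lemma \ref{EV0} gives $\widetilde{\rho}_{t_n}(\gamma_n)=1$, so $\widetilde{\rho}_{t_n}$ descends to a nontrivial $\widetilde{G}$-representation of $\pi_1(M(1/n))$, and \cite[Theorem 3.2]{BRW} yields left-orderability.

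The main obstacle is the hyperbolic case with $j_0\neq 0$, where the arc does not lie in $H_{0,0}(M)$ and Lemma \ref{main lemma} does not apply verbatim. The key observation is that the criterion behind Lemma \ref{main lemma} extends to any $H_{i_0,j_0}(M)$ provided the slope $p/q$ satisfies $pi_0+qj_0=0$, and for slope $1/n$ this forces $i_0=-nj_0$, which the $D_\infty(M)$ action always provides. A secondary issue is verifying that the analytic arc really meets $L_{1/n}$ for large $|n|$; this follows from $x_0\neq 0$ together with the finite-order vanishing of $y(t)$ at $t=0$, and since $k$ may be even one may obtain solutions for only one sign of $n$, which matches the ``$n$ large enough (or $-n$ large enough)'' phrasing in the statement.
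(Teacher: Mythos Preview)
Your proof is correct and takes essentially the same approach as the paper's: the paper phrases the key step as ``choose a lift $\widetilde{\rho_n}$ of $\rho_n$ with $\text{trans}(\widetilde{\rho_n}(\mu))=-n\cdot\text{trans}(\widetilde{\rho_n}(\lambda))$'', which is exactly your use of the $D_\infty(M)$ action to move the arc from $H_{0,j_0}(M)$ to $H_{-nj_0,j_0}(M)$ (cf.\ Lemma \ref{invariant}). Your local analysis via $y(t)=ct^k+O(t^{k+1})$ is more explicit than the paper's one-line appeal to $A\not\subset L_0$, but the content is the same.
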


\begin{proof}
The proof is almost the same as Theorem \ref{Galois} except for the case when the meridian $\mu$ is mapped to a hyperbolic element by $\rho_0$. Similar to Case 2 of the proof of Theorem \ref{Galois}, first we construct $A\subset H_{i,j}(M)$ and show that it is not contained in the horizontal axis $L_0$. But after that, we do not need to show $i=j=0$ (i.e. $A\subset H_{0,0}(M)$). 
Since $A$ is not horizontal, there exists $N\in \mathbb{Z}_{>0}$ (or $N\in \mathbb{Z}_{<0}$) large enough (or $-N$ large enough resp.) such that $L_{-\frac{1}{N}}$ intersects $A$  at points that are not parabolic or ideal and $M(\frac{1}{n})$ irreducible for all $n\geq N$ (or $-n\geq -N$ resp.). Suppose $\rho_n$ corresponds to an intersection point of $L_{-\frac{1}{n}}$ and $A$, then $\rho_n(\mu\lambda^n)=I$.
By choosing the lift $\widetilde{\rho_n}$ of $\rho_n$, we can make trans $\widetilde{\rho_n}(\mu)=-n\cdot$ trans $\widetilde{\rho_n}(\lambda)$, which then implies trans $\widetilde{\rho_n}(\mu\lambda^n)=0$. So we actually have $\widetilde{\rho_n}(\mu\lambda^n)=I$ and therefore $\widetilde{\rho_n}$ is a nontrivial $\widetilde{G}$ representation of $\pi_1(M(\frac{1}{n}))$.
\end{proof}

\section{Unsolved Problems} \label{problems}
Here are some interesting questions for potential follow-up researches:
\begin{enumerate}
\item[1)] Can we drop the longitudinal rigid condition in Theorem \ref{Thm_Alex}? In particular, is it possible prove $H^1(\pi_1(M(0)); \mathfrak{sl}_2(\mathbb{R})_{\rho^+}))=0$, which is weaker than $M$ being longitudinal rigid but enough to prove Theorem \ref{Thm_Alex}?

\item[2)] As mentioned in the remark after the proof of Theorem \ref{Thm_Alex}, according to our numerical experiment, a larger range of slopes of orderable Dehn filling is expected. But unfortunately the author was not able to prove it. Is it possible to extend the interval $(-a,0]$ in Theorem \ref{Thm_Alex} to $(-a, b)$ with $a, b > 0$ using some properties of the character variety?

\item[3)] In Theorem \ref{Galois}, we assumed that the holonomy representation has a trace field with a real embedding. When do holonomy representations have real places? Calegari studied some special cases in \cite{real_place}. Are there more general criteria?

\item[4)] In Theorem \ref{Galois}, we also assumed the $0$ filling on $M$ is a mapping torus of genus $2$. This is because we need the translation number of the homological longitude of $M$ to be $0$. It is in general a challenging question to compute the translation number. Is there an algorithm to compute the translation number of the longitude of $M$? Can we weaken the restriction on the genus and still have the translation number of the longitude being $0$?

\end{enumerate}

\bibliographystyle{mrl}
\bibliography{reference_list}

\begin{thebibliography}{10}

\bibitem{realAG}
S.~Basu, R.~Pollack, and M.-F. Roy, Algorithms in real algebraic geometry,
  number~10 in Algorithms and Computation in Mathematics, Springer-Verlag,
  Berlin, second edition (2006).
  \href{https://mathscinet.ams.org/mathscinet-getitem?mr=2248869}{MR2248869}.

\bibitem{augment}
D.~W. Boyd, F.~Rodriguez-Villegas, and N.~M. Dunfield, \emph{Mahler's Measure
  and the Dilogarithm (II)} (2005).
  \href{https://arxiv.org/abs/math/0308041}{arXiv:math/0308041}.

\bibitem{BC}
S.~Boyer and A.~Clay, \emph{Foliations, orders, representations, {L-spaces} and
  graph manifolds}, Adv. Math. \textbf{310} (2017) 159--234.
  \href{https://mathscinet.ams.org/mathscinet-getitem?mr=3620687}{MR3620687}.

\bibitem{BGW}
S.~Boyer, C.~M. Gordon, and L.~Watson, \emph{On {L-spaces} and {left-orderable}
  fundamental groups}, Math. Ann. \textbf{356} (2013), no.~4,  1213--1245.
  \href{http://www.ams.org/mathscinet-getitem?mr=3072799} {MR3072799}.

\bibitem{BN}
S.~Boyer and A.~Nicas, \emph{Varieties of group representations and Casson's
  invariant for rational homology 3-spheres}, Trans. Amer. Math. Soc.
  \textbf{322} (1990) 507--522.
  \href{https://mathscinet.ams.org/mathscinet-getitem?mr=972701}{MR0972701}.

\bibitem{BRW}
S.~Boyer, D.~Rolfsen, and B.~Wiest, \emph{Orderable 3-manifold groups}, Ann. I.
  Fourier \textbf{55} (2005) 243--288.
  \href{https://mathscinet.ams.org/mathscinet-getitem?mr=2141698}{MR2141698}.

\bibitem{real_place}
D.~Calegari, \emph{Real Places and Torus Bundles}, Geom. Dedicata \textbf{118}
  (2006), no.~1,  209--227.
  \href{https://mathscinet.ams.org/mathscinet-getitem?mr=2239457}{MR2239457}.

\bibitem{CCGLS}
D.~Cooper, M.~Culler, H.~Gillet, D.~D. Long, and P.~B. Shalen, \emph{Plane
  Curves Associated to Character Varieties of {3-}Manifolds}, Invent. Math.
  \textbf{118} (1994), no.~1,  47--84.
  \href{http://www.ams.org/mathscinet-getitem?mr=1288467} {MR1288467}.

\bibitem{PE}
M.~Culler and N.~Dunfield, \emph{{PE}, tools for computing character
  varieties}. Available at \url{http://bitbucket.org/t3m/pe} (06/2018).

\bibitem{CD16}
M.~Culler and N.~M. Dunfield, \emph{Orderability and {Dehn} filling}, Geom.
  Topol. \textbf{22} (2018), no.~3,  1405--1457.
  \href{https://mathscinet.ams.org/mathscinet-getitem?mr=3780437}{MR3780437}.

\bibitem{CS}
M.~Culler and P.~B. Shalen, \emph{Varieties of Group Representations and
  Splittings of {3-}Manifolds}, Ann. Math. \textbf{117} (1983), no.~1,
  109--146. \href{http://www.ams.org/mathscinet-getitem?mr=0683804}
  {MR0683804}.

\bibitem{sage}
T.~S. Developers, {S}ageMath, the {S}age {M}athematics {S}oftware {S}ystem
  ({V}ersion 6.10) (2015). \url{http://www.sagemath.org}.

\bibitem{DG}
N.~M. Dunfield and S.~Garoufalidis, \emph{Incompressibility criteria for
  spun-normal surfaces}, Trans. Amer. Math. Soc. \textbf{364} (2012), no.~11,
  6109--6137.
  \href{https://mathscinet.ams.org/mathscinet-getitem?mr=2946944}{MR2946944}.

\bibitem{EHN}
D.~Eisenbud, U.~Hirsch, and W.~Neumann, \emph{Transverse foliations of Seifert
  bundles and self homeomorphism of the circle}, Comment. Math. Helv.
  \textbf{56} (1981), no.~1,  638--660.
  \href{https://mathscinet.ams.org/mathscinet-getitem?mr=656217}{MR0656217}.

\bibitem{Ghys}
E.~Ghys, \emph{Groups acting on the circle}, Enseign. Math. \textbf{47} (2001),
  no.~2,  329--407. \href{http://www.ams.org/mathscinet-getitem?mr=1876932}
  {MR1876932}.

\bibitem{Goldman}
W.~M. Goldman, \emph{topological components of spaces of representations},
  Invent. Math. \textbf{93} (1988) 557--607.
  \href{https://mathscinet.ams.org/mathscinet-getitem?mr=952283}{MR0952283}.

\bibitem{GL14}
C.~Gordon and T.~Lidman, \emph{Taut Foliations, Left-Orderability, and Cyclic
  Branched Covers}, Acta Math. Vietnam. \textbf{39} (2014), no.~4,  599--635.
  \href{https://mathscinet.ams.org/mathscinet-getitem?mr=3292587}{MR3292587}.

\bibitem{GL1}
C.~Gordon and J.~Luecke, \emph{Reducible manifolds and {Dehn} surgery},
  Topology \textbf{35} (1996) 385--409.
  \href{https://mathscinet.ams.org/mathscinet-getitem?mr=1380506}{MR1380506}.

\bibitem{Hartshorne}
R.~Hartshorne, Algebraic geometry, Graduate Texts in Mathematics, No. 52,
  Springer-Verlag, New York-Heidelberg (1977), ISBN 0-387-90244-9.
  \href{http://www.ams.org/mathscinet-getitem?mr=0463157}{MR0463157}.

\bibitem{2bridge}
A.~Hatcher and W.~Thurston, \emph{Incompressible surfaces in 2-bridge knot
  complements}, Invent. Math. \textbf{79} (1985), no.~2,  225--246.
  \href{https://mathscinet.ams.org/mathscinet-getitem?mr=778125}{MR0778125}.

\bibitem{HZ}
C.~Herald and X.~Zhang, \emph{A note on orderability and {D}ehn filling}, Proc.
  Amer. Math. Soc. \textbf{147} (2019), no.~7,  2815--2819.
  \href{https://mathscinet.ams.org/mathscinet-getitem?mr=3973885}{MR3973885}.

\bibitem{HP2}
M.~Heusener and J.~Porti, \emph{Deformations of reducible representations of
  3-manifold groups into {PSL}$_2(\mathbb{C})$}, Algebr. Geom. Topol.
  \textbf{5} (2005) 965--997.
  \href{https://mathscinet.ams.org/mathscinet-getitem?mr=2171800}{MR2171800}.

\bibitem{Hu}
Y.~Hu, \emph{Left-orderability and cyclic branched coverings}, Algebr. Geom.
  Topol.  (2015) 399--413.
  \href{https://mathscinet.ams.org/mathscinet-getitem?mr=3325741}{MR3325741}.

\bibitem{Khoi}
V.~T. Khoi, \emph{A cut-and-paste method for computing the Seifert volumes},
  Math. Ann. \textbf{326} (2003), no.~4,  759--801.
  \href{https://mathscinet.ams.org/mathscinet-getitem?mr=2003451}{MR2003451}.

\bibitem{Arithmetic}
C.~Maclachlan and A.~W. Reid, The Arithmetic of Hyperbolic 3-Manifolds, number
  219 in Graduate Texts in Mathematics, Springer-Verlag, New York (2003).
  \href{https://mathscinet.ams.org/mathscinet-getitem?mr=1937957}{MR1937957}.

\bibitem{FPorti}
P.~Menal-Ferrer and J.~Porti, \emph{Twisted cohomology for hyperbolic three
  manifolds}, Osaka J. Math \textbf{49} (2012) 741--769.
  \href{https://mathscinet.ams.org/mathscinet-getitem?mr=2199350}{MR2199350}.

\bibitem{Milnor}
J.~W. Milnor, \emph{On the existence of a connection with curvature zero},
  Comment. Math. Helv. \textbf{32} (1958) 215--223.
  \href{https://mathscinet.ams.org/mathscinet-getitem?mr=0095518}{MR0095518}.

\bibitem{fiber_lspace}
Y.~Ni, \emph{Knot Floer homology detects fibered knots}, Invent. Math.
  \textbf{170} (2007) 577--608.
  \href{https://mathscinet.ams.org/mathscinet-getitem?mr=2357503}{MR2357503}.

\bibitem{lspaceknot}
P.~Ozsv\'{a}th and Z.~Szab\'{o}, \emph{On knot Floer homology and lens space
  surgeries}, Topology \textbf{44} (2005), no.~6,  1281--1300.
  \href{http://www.ams.org/mathscinet-getitem?mr=2168576} {MR2168576}.

\bibitem{porti_french}
J.~Porti, Torsion de Reidemeister pour les vari\'{e}t\'{e}s hyperboliques,
  number 612 in Mem. Amer. Math. Soc., American Mathematical Society (1997).
  \href{https://mathscinet.ams.org/mathscinet-getitem?mr=1396960}{MR1396960}.

\bibitem{FiberAlexPoly}
E.~S. Rapaport, \emph{On The Commutator Subgroup of a Knot Group}, Annals of
  Mathematics \textbf{71} (1960), no.~1,  157--162.
  \href{https://mathscinet.ams.org/mathscinet-getitem?mr=116047}{MR0116047}.

\bibitem{shaf1}
I.~R. Shafarevich, Basic algebraic geometry. 1, Springer, Heidelberg, third
  edition (2013), ISBN 978-3-642-37955-0; 978-3-642-37956-7.
  \href{http://www.ams.org/mathscinet-getitem?mr=3100243}{MR3100243}.

\bibitem{Tillmann}
S.~Tillmann, \emph{Varieties associated to {3-manifolds}: Finding Hyperbolic
  Structures of and Surfaces in {3-}Manifolds} (Course Notes, 2003). Available
  at \url{http://www.maths.usyd.edu.au/u/tillmann/research.html}.

\bibitem{Tran_cover}
A.~T. Tran, \emph{On left-orderability and cyclic branched coverings}, J. Math.
  Soc. Japan \textbf{67} (2015) 1169--1178.
  \href{https://mathscinet.ams.org/mathscinet-getitem?mr=3376583}{MR3376583}.

\bibitem{Tran_surg}
---{}---{}---, \emph{On left-orderable fundamental groups and {Dehn} surgeries
  on knots}, J. Math. Soc. Japan \textbf{67} (2015) 319--338.
  \href{https://mathscinet.ams.org/mathscinet-getitem?mr=3304024}{MR3304024}.

\bibitem{Weil}
A.~Weil, \emph{Remarks on Cohomology of Groups}, Annals of Mathematics
  \textbf{80} (1964), no.~1,  149--157.
  \href{https://mathscinet.ams.org/mathscinet-getitem?mr=169956}{MR0169956 }.

\bibitem{Wood}
J.~W. Wood, \emph{Bundles with totally disconnected structure group}, Comment.
  Math. Helv. \textbf{46} (1971) 257--273.
  \href{https://mathscinet.ams.org/mathscinet-getitem?mr=293655}{MR0293655}.

\bibitem{Zentner}
R.~Zentner, \emph{Integer homology {3-spheres} admit irreducible
  representations in {SL}(2,$\mathbb{C}$)}, Duke Math. J. \textbf{167} (2018),
  no.~9,  1643--1712.
  \href{https://mathscinet.ams.org/mathscinet-getitem?mr=3813594}{MR3813594}.

\end{thebibliography}

\end{document}